\newcommand{\overI}{\overline{\mathcal{I}}}
\newcommand{\underI}{\underline{\mathcal{I}}}
\newcommand{\overunderI}{\underline{\overline{\mathcal{I}}}}
\newcommand{\E}[2]{\mathbb{E}_{ #1 } \left[ #2 \right]}
\newcommand{\EE}{\mathbb{E}}
\newcommand{\1}{\mathds{1}}
\newtheorem{thm}{Theorem}
\newtheorem{propo}{Proposition}
\newtheorem{lme}{Lemma}
\newtheorem{dfi}{Definition}
\newtheorem{rmk}{Remark}
\newtheorem{example}{Example}
\def\ps@pprintTitle{%
\let\@oddhead\@empty
\let\@evenhead\@empty
\def\@oddfoot{}%
\let\@evenfoot\@oddfoot}
\definecolor{backcolour}{rgb}{0.95,0.95,0.92}
\begin{document}

\begin{frontmatter}

\title{Quantile-constrained Wasserstein projections for robust interpretability of numerical and machine learning models}


\author[a,b,c,e]{Marouane Il Idrissi}
\author[a,b,d]{Nicolas Bousquet}
\author[c]{Fabrice Gamboa}
\author[a,b,c]{Bertrand Iooss}
\author[c]{Jean-Michel Loubes}

\address[a]{EDF Lab Chatou, 6 Quai Watier, 78401 Chatou, France}
\address[b]{SINCLAIR AI Lab., Saclay, France}
\address[c]{Institut de Mathématiques de Toulouse, 31062 Toulouse, France}
\address[d]{Sorbonne Université, LPSM, 4 place Jussieu, Paris, France}
\address[e]{Corresponding Author - Email: marouane.il-idrissi@edf.fr}

\begin{abstract}
Robustness studies of black-box models is recognized as a necessary task for numerical models based on structural equations and predictive models learned from data. These studies must assess the model's robustness to possible misspecification of regarding its inputs (e.g., covariate shift). The study of black-box models, through the prism of uncertainty quantification (UQ), is often based on sensitivity analysis involving a probabilistic structure imposed on the inputs, while ML models are solely constructed from observed data. Our work aim at unifying the UQ and ML interpretability approaches, by providing relevant and easy-to-use tools for both paradigms. To provide a generic and understandable framework for robustness studies, we define perturbations of input information relying on quantile constraints and projections with respect to the Wasserstein distance between probability measures, while preserving their dependence structure. We show that this perturbation problem can be analytically solved. Ensuring regularity constraints by means of isotonic polynomial approximations leads to smoother perturbations, which can be more suitable in practice. Numerical experiments on real case studies, from the UQ and ML fields, highlight the computational feasibility of such studies and provide local and global insights on the robustness of black-box models to input perturbations.
\end{abstract}


\begin{keyword}
interpretability \sep machine learning \sep sensitivity analysis \sep computer model \sep sensitivity analysis \sep robustness \sep epistemic uncertainty \sep domain uncertainty \sep quantiles \sep isotonic polynomials
\end{keyword}


\end{frontmatter}

\section{Introduction}\label{sec:intro}
Multiple engineering fields require models for prediction and phenomenological understanding. Machine learning (ML) and uncertainty quantification (UQ) of numerical models are two essential approaches to developing and manipulating such models. Because they require, for their enlightened use, an adequate understanding of their characteristics, they share fundamental similarities. These two frameworks feed off each other through the duality of sensitivity analyses (SA), a fundamental methodological corpus in UQ, and ML interpretability methods, as explained by \cite{RAZAVI2021104954,iooken22}. In particular, recent advances in explainable ML leverage tools from SA to produce meaningful interpretations of black-box models \citep{Fel2021, Benard2022}, and novel SA estimation schemes are heavily based on the construction of suitable ML models \citep{Broto2020, Benard_shaff_2022}. Both SA and ML interpretability especially rely on the definition, estimation, and manipulation of diagnostics related to the characteristics of a model and how its behavior depends on its inputs \citep{DaVeiga2021,molnarbook2018,samek_explainable_2019}. Formally speaking, let a model $f$ be defined as a mapping between \emph{inputs} $X \in \mathcal{X}$ and \emph{outputs} $Y \in \mathcal{Y}$ where $(\mathcal{X},\mathcal{Y})$ are two metric spaces: 
$$Y = f(X).$$
In a ML context, $f$ is defined as a  \emph{predictive model} (e.g., penalized linear regression, neural network) linking a feature (or covariate) instance $X$ to a prediction $Y$ \citep{Hastie2009}. In the UQ framework, a so-called \emph{computer model} $f$ represents the numerical implementation of a hypothetical-deductive link (e.g., by systems of ordinary differential equations, by finite element methods) between $X$ and $Y$ \citep{Smith_book_2014}. 
  
In both fields,  the input $X\in\mathcal{X}$ is generally assumed to be random, inducing a general framework for handling uncertainties about the latter. Let $P$ be the distribution of $X$. In the ML context, $P$ is defined implicitly by an empirical measure: given a set of observations $x^{(1)}, \dots, x^{(n)} \in \mathcal{X}$,
\begin{equation}
    P = \frac{1}{n}\sum_{i=1}^n \delta_{x^{(i)}}
    \label{eq:empMeas}
\end{equation}
where $\delta$ denotes the Dirac measure. On the other hand, in the UQ setting, $P$ is often explicitly chosen based on observations of $X$, expert assessment (domain knowledge), or stochastic inversion from observations of $Y$ \citep{Sullivan2017}. The diagnostics mentioned above correspond to estimations of key interpretable features of $Y$, or \emph{quantities of interest} (QoI). In the SA literature, such a QoI is often referred as the \emph{score}  \citep{Rubinstein1989}, while ML researchers rather speak about \emph{predictive performance}  \citep{Paananen2019}.  Recall that SA aims to rank the dimensions of $X$ according to their influence on this QoI \citep{cac03}. For instance, in local SA, it is usually computed by way of a differential operator with respect to (w.r.t.) the dimensions of $X$ \citep{DaVeiga2021}. In global SA, it can be typically chosen as the output's variance or its quantiles \citep{Fel2021,MAUMEDESCHAMPS2018122}. More general objects characterizing the distribution of $Y$ (e.g., a kernel embedding \citep{Barr2022}) can also be of interest, possibly at the cost of a less immediate interpretability. In ML interpretability, local methods focus on a particular prediction instance, letting the QoI be the identity function \citep{Stevens2020} or a local linear decomposition of $f$ \citep{Visani2022}. Global ML interpretability often relies on QoI defined as performance metrics (e.g., accuracy, loss value) of  $f$ computed over a training dataset \citep{Gromping2015,Covert2020,Iooss2022}. 

Accordingly, the use of these diagnostics to allow different levels of interpretability is subject to the same robustness problem: they must remain relevant when $P$ suffers from \emph{misspecification}. It would improve the confidence in both the usage and the insights that an ML model offers \citep{Barredo2020}. In this framework it is fundamentally connected to problems of domain adaptation and transfer learning \citep{Bruzzone2010,Lemberger2020} (e.g., when the data used for the design of $f$ suffer from selection bias w.r.t. operational data), including in particular the robustness to \emph{covariate shift} \citep{SHIMODAIRA2000227,Tripuraneni2021} (see Figures \ref{fig:covariate.shift1} and \ref{fig:covariate.shift2} for an illustration). More generally, whether in UQ or ML, this misspecification is due to the epistemic uncertainty affecting the knowledge of the properties of $P$ (e.g., support, geometry, topology) due to the finiteness of the available information (e.g., data, expertise, boundary conditions) \citep{Hullermeier2021,Sullivan2017}. 

\begin{figure}[b!]
 \centering
    \includegraphics[width=0.75\linewidth]{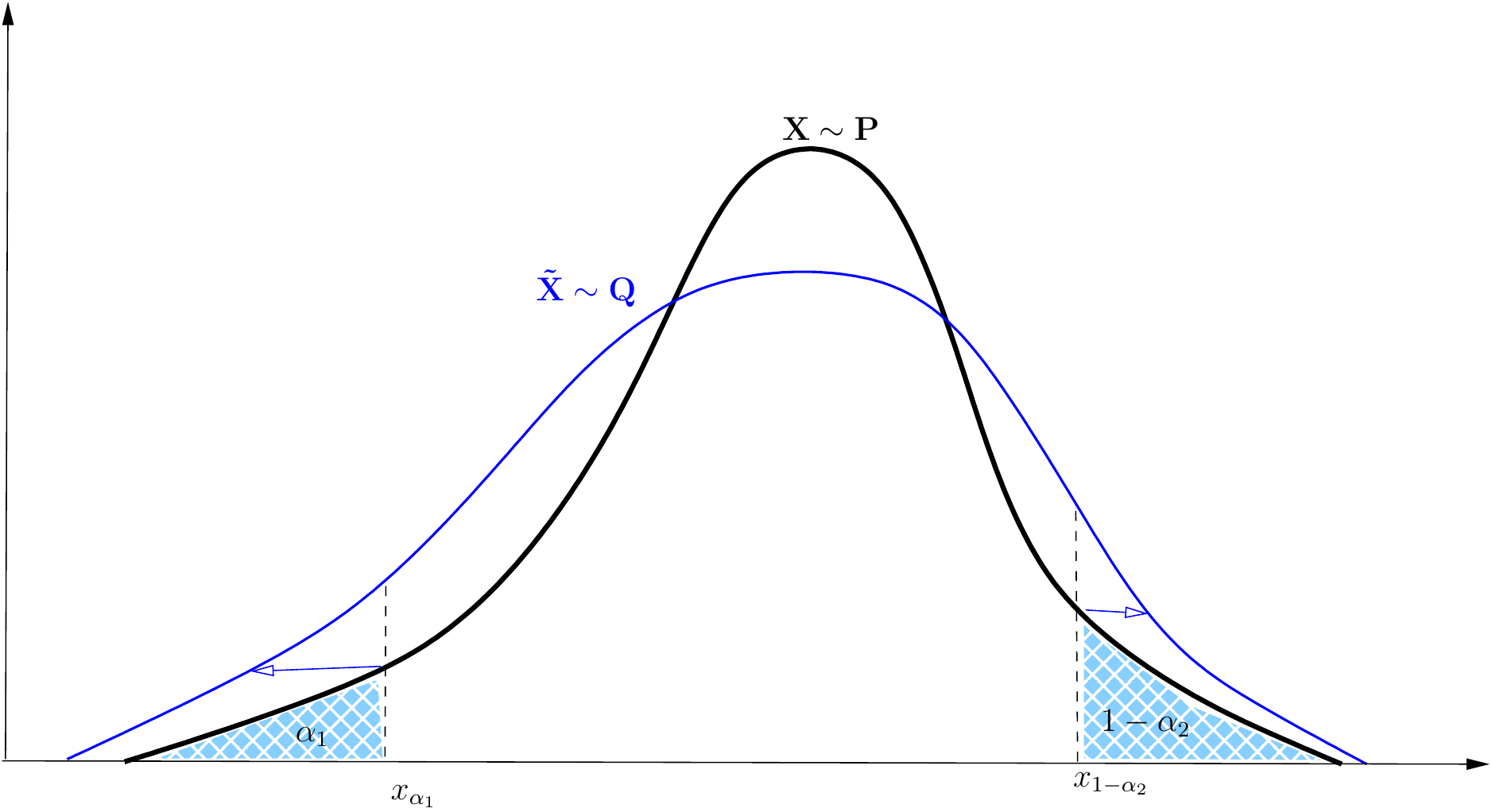}
    \caption{One-dimensional illustration of covariate shift in the UQ framework. A postulated density distribution for $X\sim P$ (solid black line) is dilated by modifying the tail order of low and high quantiles defining an application domain (blue arrows), resulting in a new distribution $Q$. }
    \label{fig:covariate.shift1}
\end{figure}

\begin{figure}[t!]
 \centering
    \includegraphics[width=0.75\linewidth]{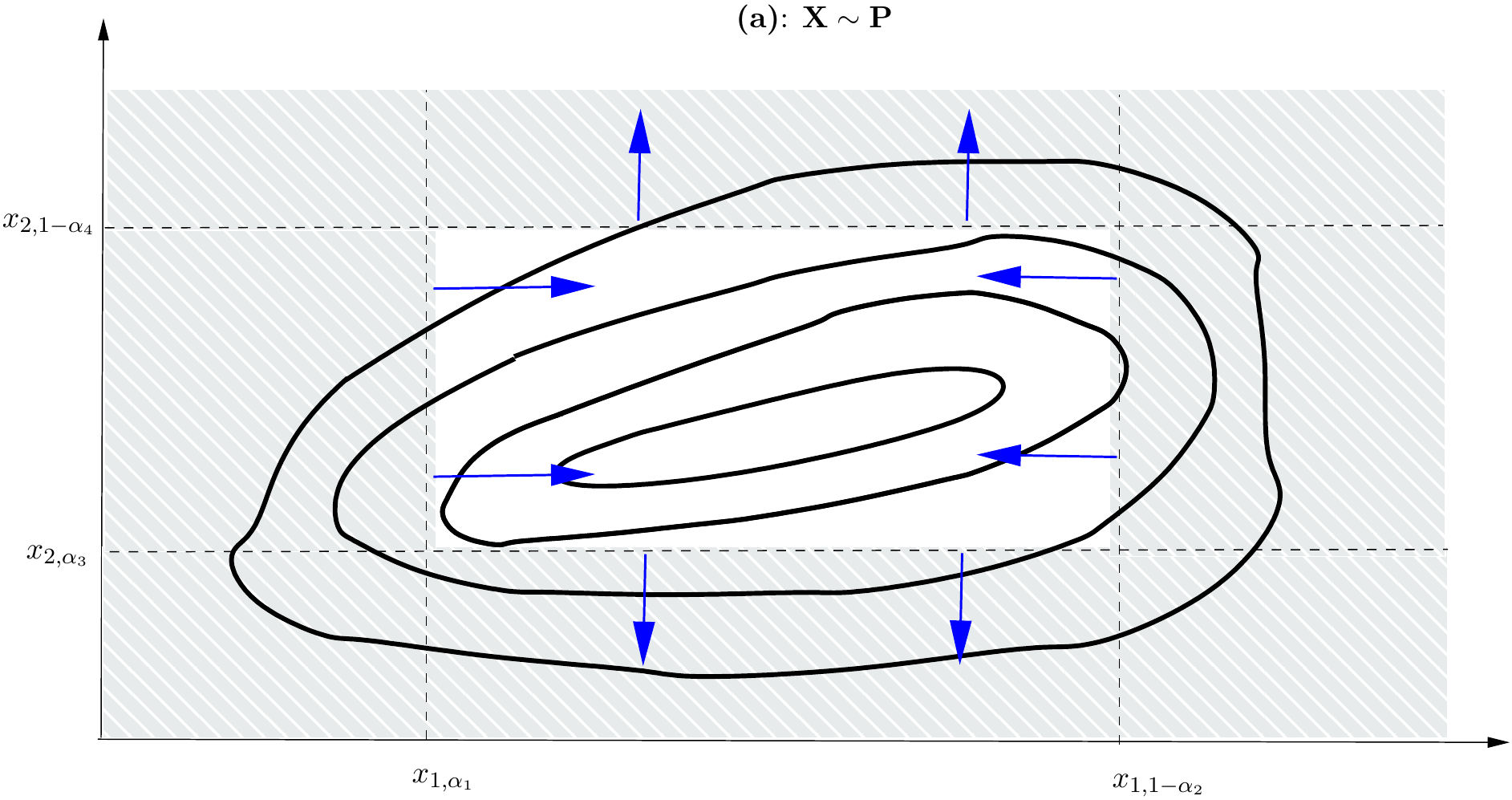}
    \includegraphics[width=0.75\linewidth]{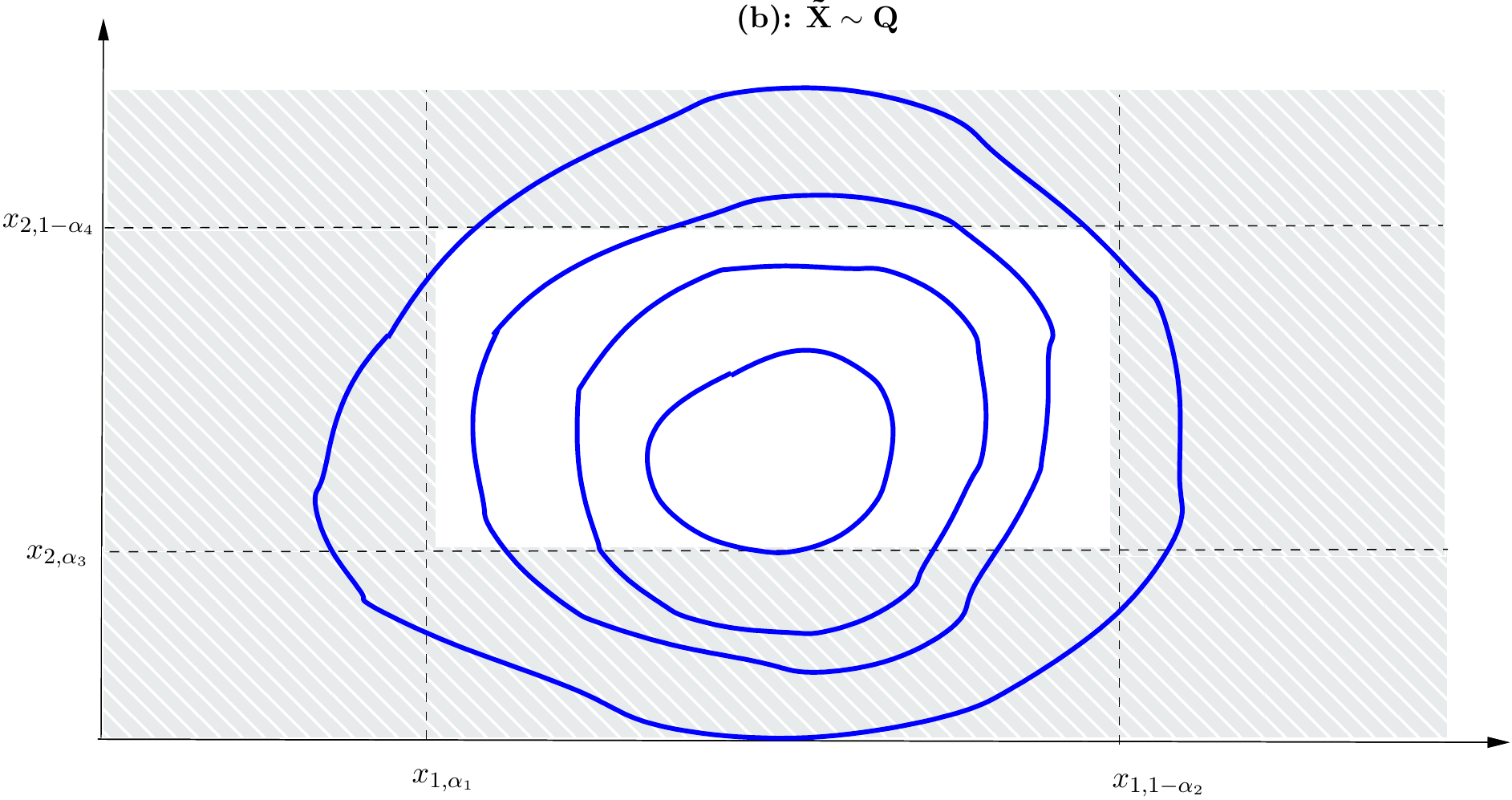}
    \caption{Two-dimensional illustration of covariate shift in the UQ framework. Graph {\bf (a)} displays the isodensity curves of a joint distribution $P$ on $X=(X_1,X_2)$. Graph {\bf (b)} displays the result of a simultaneous quantile shrink for $X_1$ and quantile dilatation for $X_2$ while preserving the dependence structure.  }
    \label{fig:covariate.shift2}
\end{figure}

This epistemic uncertainty about $P$ thus impacts the generalization capability of $f$. It is essential to define a practical \emph{application domain} \citep{ROY20112131}, or usage domain of $f$, allowing to extrapolate beyond a validity domain based on selected situations (e.g., a domain including unobserved data points in ML, or unsure structural mechanisms in numerical models). Generalization bounds in ML usually depend on notions of capacities (e.g., the Vapnik–Chervonenkis dimension \cite{Vapnik1971}). They rely on a sufficient number of observations to obtain generalization, i.e., linked to epistemic uncertainty reduction. Hence, generalization power and epistemic uncertainties are intimately linked \citep{VallePerez2020}. Therefore, robustness studies of the application domain's uncertainty are particularly important for both fields. It motivated the search for adversarial situations in ML \citep{AthalyeEIK18} and the use of imprecise probabilities in numerical model exploration \citep{BEER20134}. 

The need to conduct robustness studies raises the question of how to reasonably vary $P$ in a \emph{perturbation class} ${\cal{C}}\subset \mathcal{P}(\mathcal{X})$, where  $\mathcal{P}(\mathcal{X})$ is the set of probability measures supported on $\mathcal{X}$, by modifying the mass given to different parts of $\mathcal{X}$ in an interpretable way. Such variations allow checking the consistency of the information delivered by these diagnostics. Here, \textit{reasonableness} means two things: a perturbed distribution $Q$ must remain close to the original $P$ in some formalized sense, and the computations required for the robustness analyses must have a moderate cost. 

Several authors in both fields have conducted this type of study. In the ML setting, finding adversarial situations or detecting bias affecting a QoI can stand on moving the original data (e.g., by optimal transport \citep{Pydi2020,pmlr-v97-gordaliza19a}), resulting in an empirical shift of $P$. Researchers in UQ distinguish between \emph{distributional sensitivity analysis} (DSA) \citep{Allaire2010DistributionalSA,Narayan2011} which aims at providing a robustness diagnosis on a QoI used in SA (or connected similar studies, e.g., in financial risk analysis \citep{Cont2008RobustnessAS}), and other robustness approaches whose goal is to study the sensitivity of a given QoI to a gradual modification of $P$ in ${\cal{C}}$. For instance \cite{Lemaitre2015}, \cite{Gauchy2021} and \cite{Iooss2021} proposed such indices for exceedance probabilities, quantiles or superquantiles of the distribution of $Y$ under various perturbation schemes. 

Many choices have been proposed for perturbation classes. They are often indexed by parameters $\theta$ and denoted $\mathcal{C}(\theta)$. For instance, in the SA setting, they can be defined as contamination classes for Bayesian SA \citep{Gustafson1995,Roos2015}, as classes of generalized moments \citep{Lemaitre2015,Bachoc2020} or as classes defined by Fisher-Rao derivatives \citep{Kurtek2015,Gauchy2021}, among others. A unified view of these approaches is to define the perturbed distribution $Q$ as the solution to the projection problem
\begin{align}
    \begin{split}
        Q = \underset{G \in \mathcal{P}(\mathcal{X})}{\textrm{argmin }} & \quad \mathcal{D}(P,G) \\
    \textrm{s.t. }& \quad G \in \mathcal{C}(\theta).
    \label{eq:pert_prob}
    \end{split}
\end{align}
where $\mathcal{D}$ is a discrepancy between probability measures and $\mathcal{C}(\theta)$ denotes a fixed perturbation class. The Kullback-Leibler (KL) divergence is often chosen as a suitable discrepancy (e.g. in \cite{Lemaitre2015,Bachoc2020}), leading to entropic projections \cite{Csiszar1975}. For instance in those developments, whenever $\mathcal{X}\subset \mathbb{R}^d$ ($d\geq 1$), ${\cal{C}}(\theta)$ can be defined as the distributions belonging to $\mathcal{P}(\mathcal{X})$ with a deviated mean $\eta$, different from $\E{P}{X}$. Reusing the term proposed by \cite{Bachoc2020}, $\theta = \|\eta-\EE[X]\|$ can be called a \emph{perturbation intensity}, where $\theta$ belongs to an ordered set $\Theta$. Hence, for a fixed perturbation intensity $\theta$, the solution $Q$ of (\ref{eq:pert_prob}) is the optimally perturbed distribution of $P$ w.r.t. $\mathcal{C}(\theta)$. 

Although rational, moment-based perturbations of $P$ presents significant issues that limit the conclusions of interpretability analyses, and the subsequent unification of UQ and ML methodologies. First, the choices of $\theta$, $\mathcal{C}(\theta)$, and $\Theta$ remain subjective and submitted to strong assumptions. For instance, in \cite{Lemaitre2015}, some generalized moment of $Q$ are required to exist, and the moment deviation intensity must subjectively chosen by the practitioner. When $\mbox{dim}(X)$ is large, such assessments seem challenging to make. In \cite{Gauchy2021}, $\Theta$ comprises the values taken by the derivative of the Fisher metric, requiring regularity properties on $P$, which is often assumed to be in a restrictive parametric family, for computational reasons. In general, determining the boundaries of $\Theta$ remains challenging.    

Second, choosing the KL divergence or the derivative of the Fr\'echet metric as in \cite{Hart2019} as a discrepancy imposes strong restrictions on the space of reachable probability distributions. It result in a search on a restricted part of $\mathcal{P}(\mathcal{X})$ in (\ref{eq:pert_prob}). For instance, using the KL divergence implies that $Q$ should be absolutely continuous w.r.t. $P$, which does not allow to consider continuous perturbed distributions $Q$ given an empirical initial distribution $P$. Finally, the behavior of these discrepancies (purely related to information geometry) remains uneasy to explain to non-specialists.

This article, therefore, aims to address both of these issues, improving the connections between the interpretability analyses conducted in UQ and ML settings, and their robustness. More precisely, our contributions are twofold:
\begin{description}
\item[(a)] We propose a meaningful and generic approach to perturb distributions through marginal quantile constraints, selecting the 2-Wasserstein distance for ${\cal{D}}$. These choices allow to solve the problems mentioned above. A key point of the proposed methodology is that no strong regularity assumptions must be assumed on $f$ and it does not rely on $f$ having a particular structure (e.g., belonging to a particular family of predictive models or based on specific physical equations). This effectively unifies UQ and ML approaches.
\item[(b)] We demonstrate the computational tractability of this methodology by implementing it on different types of QoI, on both numerical and predictive models, and studying some robustness indicators to perturbations in the particular case where $f$ is considered to be a black box.
\end{description}

This article is organized as follows. In Section~\ref{sec:distribPert} we first introduce useful notations and definitions. Section~\ref{sec:qcPert} develops and motivates the choice of quantiles as an interpretable and generic basis for defining meaningful perturbations and defines perturbation schemes relevant for different robust interpretability studies. Section~\ref{sec:wassProj} presents the general framework of probability measure projection using the $2$-Wasserstein distance and proposes analytical solutions and numerical optimization schemes for solving the input perturbation problem through the help of isotonic polynomials. Section~\ref{sec:robust_meas} showcases our method on two use-cases from both the ML and UQ fields, from which local and global robustness insights are highlighted. A discussion section ends this article, opening avenues for improvement. All proofs of technical results are postponed to a  dedicated appendix.

\section{Notations and main definitions}\label{sec:distribPert}
Let us introduce some useful notations and definitions. Let $d$ and $p$ be two positive integers. Let $\mathcal{X}$ be a subset of $\mathbb{R}^d$, and $\mathcal{P}_p(\mathcal{X})$ be the subset of $\mathcal{P}(\mathcal{X})$ of measures with finite $p$-th moment. The initial probability measure $P$ is either defined through an explicit distribution, or empirically, as in (\ref{eq:empMeas}). We will denote by $Q\in\mathcal{P}(\mathcal{X})$ the optimally perturbed distribution of $P$. 

Furthermore, let us denote $\Omega_X\subset\mathcal{X}$ the application domain. It is the subset of $\mathcal{X}$ where $f$ is intended to be used for predictions \citep{ROY20112131}. Both in ML and UQ, given a set  ${\bf x_n}=\{x_1,\ldots,x_n\}$ of training, validating or testing examples, the convex hull of ${\bf x_n}$  or a broader span of ${\bf x_n}$ are common candidates to define $\Omega_X$ \citep{Yousefzadeh2021}. More generally, $\Omega_X$ is an extrapolation domain where $f$ is assumed to generalize well (e.g., a paving of a compact subspace of $\mathcal{X}$ selected by tree-based classification \citep{Hooker2004b}, confidence measures or cross-validation schemes \citep{Hooker2012,Neyshabur,Wang2021,pmlr-v139-krueger21a}). In ML specifically, including out-of-distribution data in $\Omega_X$ remains an open problem \citep{9710159,Wang2021,Shen2021}. 

To echo the classical assumptions in ML and UQ, we assume that $\Omega_X$ is the union of compact subsets of $\mathcal{X}$. These subsets can be defined under some uncertainties, typically on their bounds. In a robustness analysis perspective, assuming that the dependence structure is maintained, the uncertainties on $\Omega_X$ can be interpreted as variations on the values (or thresholds) of the extreme quantiles of marginal distributions. Figure~\ref{fig:sch_opeDomain} illustrates a typical situation for a univariate marginal of $X$. 

\begin{figure}[b!]
 \centering
    \includegraphics[width=\linewidth, trim= 0.75cm 0cm 0cm 0cm]{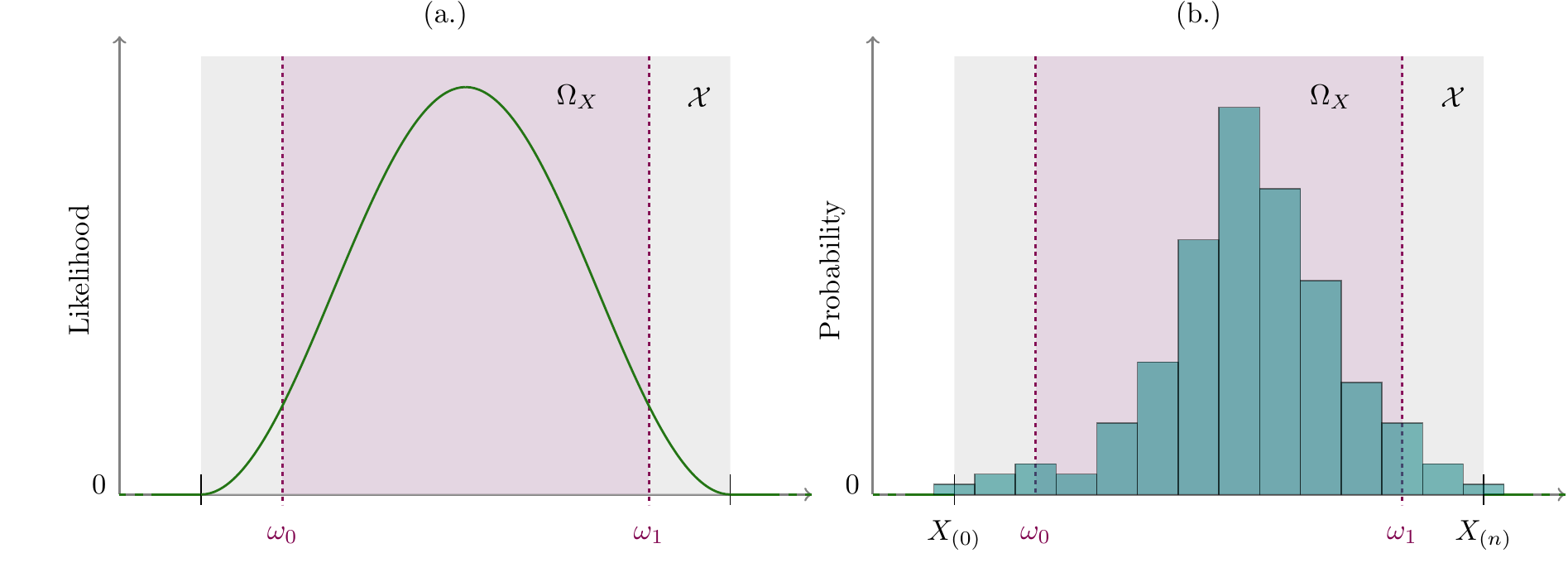}
    \caption{Application domain $\Omega_X \subset \mathcal{X}$ in the UQ (a.) and ML (b.) settings. In the UQ setting, $\mathcal{X}$ is the support of the explicitly chosen density (in green). In the ML setting, $\mathcal{X}$ is the interval between the minimum and maximum observed values. In both cases, $\Omega_X$ is included in $\mathcal{X}$, although it is not mandatory. }
    \label{fig:sch_opeDomain}
\end{figure}

The upcoming developments deal with perturbations on the marginal distributions of $P$. The following definitions recall classical results on cumulative distribution functions (cdf) and generalized quantile functions (gqf) of univariate probability measures. Denote $\mathcal{F}$ the space of univariate distribution functions:
\begin{align}
    \begin{split}
        \mathcal{F} = \Bigl\{ F &: \mathbb{R} \rightarrow [0,1] \mid F \text{ is right-continuous, non-decreasing} \\
        &\text{such that }\underset{x \rightarrow \infty}{\text{lim}} F(x) = 1 \ \text{and} 
         \underset{x \rightarrow -\infty}{\text{lim}} F(x) = 0\Bigr\}.
        \label{eq:CDF_space}
    \end{split}
\end{align}
For any $P \in \mathcal{P}(\mathbb{R})$, $\mathcal{F}$ contains the cdf of $P$, defined as:
\begin{equation*}
    F_P(t) = \int_{(-\infty, t]} dP = P\bigl((-\infty, t] \bigr).
\end{equation*}
The gqf of any probability measure $P$ can be formally defined as follows \citep{Resnick1987, Dufour1995, Kallenberg2021}.
\begin{dfi}[Generalized quantile function]
Let $P \in \mathcal{P}(\mathbb{R})$ with cdf $F_P$.
\begin{description}
\item[(i)] The generalized quantile function (gqf) of $P$ is the unique left-continuous, non-decreasing generalized inverse of $F_P$, defined, for every $a \in (0,1)$, as:
\begin{align}
    \begin{split}
         F^\leftarrow_P(a)  &= \sup~\{ t \in \mathbb{R} \mid F_P(t) < a \}, \\
                            &= \inf~\{ t \in \mathbb{R} \mid F_P(t) \geq a \}.
         \label{eq:quantile_func}
    \end{split}
\end{align}
\item[(ii)] The unique right-continuous non-decreasing generalized inverse $F^{\rightarrow}_P$ of $F_P$, almost-everywhere equal to $F^{\leftarrow}_P$, is defined, for every $a \in (0,1)$,  as:
\begin{align}
    \begin{split}
        F^\rightarrow_P(a)  &= \sup~\{ t \in \mathbb{R} \mid F_P(t) \leq a \}, \\
                            &= \inf~\{ t \in \mathbb{R} \mid F_P(t) > a \}, \\
                            &= F^{\leftarrow}_P \left( a^+ \right)
    \end{split}
\end{align}
where $F^{\leftarrow}_P(a^+)=\lim\limits_{x\to a^+} F^{\leftarrow}_P(x)$.
\end{description}
\label{def:genQuant}
\end{dfi}
It is important to note that when $F_P$ admits an inverse $F^{-1}_P$ in the traditional sense (e.g., it is continuous, strictly increasing), then the following equality holds:
$$F^{-1}_P = F^\leftarrow_P = F^\rightarrow_P,$$
which echoes and generalizes the traditional definition of a quantile function as the inverse of a cdf.

In the remainder of this work, any probability measure is assumed to have a finite 2-nd order moment; hence, their gqf is assumed to be square integrable. Accordingly, the perturbation  $\theta$ considered in the projection problem (\ref{eq:pert_prob}) corresponds to quantile constraints placed on one-dimensional marginal variables. Therefore it is assumed that $\Theta \subset \mathbb{R}$.

\section{Quantile perturbations}\label{sec:qcPert}
In this section, we define a particular set of perturbation classes. They are characterized as a set of constraints on the (generalized) quantiles of the marginal components of the inputs (or features) variables $X$ of $f$. Before presenting a formal definition of such classes, we argue their relevance for practical studies.

\subsection{Motivations}\label{sec:motivations}

First, \emph{generalized quantiles always exist}, unlike generalized moments considered by \cite{Lemaitre2015}. It is also what motivated \cite{Bachoc2020}, who proposed to define the perturbation range $\Theta$ in (\ref{eq:pert_prob}) as a compact set bounded by empirical quantiles. More precisely, recalling Definition~\ref{def:genQuant}, a gqf is the left-generalized inverse of a cdf. Moreover, they allow to uniquely characterize probability measures in $\mathcal{P}(\mathbb{R})$.
\begin{rmk}\label{prop:uniqueQuantFunc}
The unique link between left-generalized inverses of functions in $\mathcal{F}$ and probability measures can be formalized as follows. Let $F^{\leftarrow}$ be a function in the set:
\begin{align}
    \begin{split}
        \mathcal{F}^{\leftarrow} = \Bigl\{&F^{\leftarrow} : (0,1) \rightarrow \mathbb{R} \mid F^{\leftarrow} \text{ is left-continuous and non-decreasing} \Bigr\}.
        \label{eq:quantFunc_space}
     \end{split}
\end{align}
Then there exists a unique probability measure in $\mathcal{P}(\mathbb{R})$ with cdf $F$, and such that $F^{\leftarrow}$ is its gqf. This classical result seems to be widely known in the literature. For completeness, a proof sketch is provided in the appendices.
\end{rmk}
\noindent Since generalized inverses of functions in $\mathcal{F}$ always exist, perturbing marginal quantiles in (\ref{eq:pert_prob}) do not require additional assumptions either on the initial probability measure $P$ or on the shape of the target probability measure $Q$. Hence, it allows for generic, well-defined perturbations, which is key for merging both SA and ML interpretability.

Second, \emph{constraints placed on (generalized) quantiles can be interpretable}. Considering $b_i\in\mathbb{R}$ as a representative magnitude of a real component $X_i$ of $X$, then $\mathbb{P}(X_i<b_i)=\alpha_i\in(0,1)$ if and only if $b_i$ minimizes the expected pinball cost function \citep{NEURIPS2021_5b168fdb}:
\begin{eqnarray*}
\ell_i(b) & = & \EE_{X_i}\left[\vert X_i-b \vert\left\{\1_{\{X_i< b\}} + \frac{1-\alpha_i}{\alpha_i} \1_{\{X_i\geq b\}}\right\} \right].
\end{eqnarray*}
This means that $\alpha_i$ can be interpreted as the ratio $c_1/(c_1+c_2)$ where $c_1\vert x_i-b\vert\1_{\{X_i< b\}}$ and $c_2\vert x_i-b\vert\1_{\{X_i\geq b\}}$ are relative first-order costs associated to estimating $x_i$ by $b$. When $X_i$ is an influential input, it seems relevant to produce rules to modify these costs (then the value of $\alpha_i$), shifting the distribution of $X_i$. In particular, this information-theoretic interpretation of quantiles promotes their variation in sensitivity studies conducted in economic or financial contexts (see, e.g., \cite{pesenti_bettini_millossovich_tsanakas_2021}). Moreover, Bayesian statisticians have long recognized this representation as one of the most appropriate formal approaches to incorporating expert knowledge into statistical model inference \citep{Gelfand1995,Mikkola2021}.

Third, in many applied problems, \emph{(generalized) quantile specifications are often key to studying the influence of input variables on a decision-making output}. In both the UQ and ML framework, inputs $X$ can themselves be partially derived from calculations from upstream learning or numerical models (e.g., for multi-physics problems). $P$ is then often calibrated by quantile matching, which may introduce uncertainties on some of its marginal features \citep{pmlr-v97-song19a}. Numerous applications dealing with economic stress tests or risk mitigation against natural hazards use quantiles as influential inputs of decision-helping models. For instance, in the drought risk studies in \cite{Ecoto2021}, the association between soil wetness, climatic, seismic, and socioeconomic variables (e.g., city-level description) is often carried out using marginal quantiles that play the role of features for cost predictive models. Input variations of daily value-at-risk percentiles, computed from legacy data, were recently required by the European Banking Authority for generating macroeconomic scenarios used for EU-wide stress tests \citep{EBA2020}. Reverse SA studies for financial risk management, such as those conducted in \citep{Pesenti2021ReverseSA}, are primarily based on moving values-at-risk, which are quantiles.  

Let us end this subsection with two motivating examples. They offer two additional concrete illustrations of using quantiles for influence analysis. They also illustrate two different quantile perturbation schemes: quantile shifting and application domain dilatation. These schemes are formally introduced in Section~\ref{sec:qPert_shiftDila}. 

\begin{example}[Economic stress test]
Inspired by \citep{Bloom2009}, assume that an economic shock happens in an abstract country. Prospective analyses announce a $\$200$ drop in the population median wage. Before the shock, the population wage distribution $P$ is known (or observed), thanks to some annual census data. This distribution has a median wage of $\$2000$. The new population wage distribution is unknown due to the lack of recent data. The economists would like to know if they can be confident in their predictive macro-economic model $f$ w.r.t. this sudden change.
A way to answer this problem would be assessing the behavior of the model $f$ on a distribution $Q$, such that:
$$F^{\leftarrow}_Q(0.5)= 1800.$$
\end{example}

\begin{example}[River water level] This example is inspired from \cite{Iooss2011} and more deeply studied in Section~\ref{sec:waterLevel}. The safety of an industrial site located near a river is studied through the prediction of the water level $Y=f(X)$ where $f$ is a numerical hydrodynamic model, and $X$ gathers the physical features of the river. A key dimension of $X$ is the Strickler roughness coefficient for the upstream water level \cite{Fu2017}, which is modeled as a truncated Gaussian distribution on $\Omega_X=[20,50]$. However, this application domain is tainted with epistemic uncertainties on the actual nature of the riverbed (e.g.,  more or less subject to shrubby vegetation). The practical use of $f$ would require assessing its predictive power under a wider interval $\Omega_X=[5,65]$. A way to express this prospective study is to assess the model's behavior on a distribution $Q$, such that:
$$F^{\rightarrow}_Q(0)=5, \quad F^{\leftarrow}_Q(1) = 65.$$
\label{ex:riverWaterLevel}
\end{example}

\subsection{A formal definition of quantile perturbation classes}\label{sec:qPert_def}

Focusing on a univariate input $X \sim P \in \mathcal{P}(\mathbb{R})$, let us first recall the formal definition of a quantile. For  $P \in \mathcal{P}(\mathbb{R})$ and $X\sim P$, for $\alpha \in [0,1]$, an $\alpha$-quantile of $P$ is a number $p_{\alpha} \in \mathbb{R}$ such that:
\begin{eqnarray*}
{P}\left( \left\{ X < p_{\alpha}\right\} \right) \leq \alpha & \text{and} &  {P}\left( \left\{ X \leq p_{\alpha}\right\} \right)\geq \alpha.
\label{alpha.quantile}
\end{eqnarray*}
In certain cases, an $\alpha$-quantile is not unique. For instance, assuming that $P$ is purely atomic (e.g., an empirical measure) and that its cdf $F_P$ takes the constant value $\alpha$ on an open interval $(t_0,t_1)$ (i.e., it is the case if $t_0$ and $t_1$ are both atoms of an empirical probability measure), then any $t \in (t_0,t_1)$ is an $\alpha$-quantile. By convention, the gqf of $P$ defined by (\ref{eq:quantile_func}) is the smallest of the $\alpha$-quantiles of $P$ (in this case, $F^{\leftarrow}_P(\alpha)=t_0$). 

As a result, given a chosen $b\in\mathcal{X}$, defining a perturbed version $Q$  of $P$ through the equality constraints $F^{\leftarrow}_Q(\alpha)=b$ seems somewhat arbitrary. It would implicitly imply constraining the smallest $\alpha$-quantile value of $Q$. Arguably, the value of $b$ should be constrained to be a part of the set of all possible $\alpha$-quantiles:
\begin{align*}
    b \in \{q_{\alpha} \in \mathcal{X} \mid & Q( \left\{X < q_{\alpha} \right\}) \leq \alpha, \  Q(\left\{X\leq q_{\alpha} \right\})\geq \alpha \},
\end{align*}
or written equivalently
\begin{equation}
    F^{\leftarrow}_Q(\alpha) \geq b \geq F^{\leftarrow}_Q(\alpha^+) = F^{\rightarrow}_Q(\alpha).
    \label{eq:qConstr_true}
\end{equation}
In the case where $F_Q$ is invertible, it becomes a traditional equality constraint: any $\alpha$-quantile is uniquely defined (i.e., $F^{\leftarrow}_Q(\alpha) =F^{\rightarrow}_Q(\alpha)$). Constraints of the form (\ref{eq:qConstr_true}), which are referred to as \emph{quantile constraints}. They are the basis to define quantile perturbation classes.

\begin{dfi}[Quantile perturbation class]
Let $K \in \mathbb{N}^*$ be the cardinality of a collection of quantile constraints defined by $\alpha = (\alpha_1, \dots, \alpha_K)^\top \in [0,1]^K$ and $b = (b_1, \dots, b_K)^\top \in \mathbb{R}^K$. The quantile perturbation class $\mathcal{Q} \subset \mathcal{P}(\mathbb{R})$ is the set of probability measures defined as:
$$\mathcal{Q} = \left\{Q \in \mathcal{P}(\mathbb{R}) \mid F^{\leftarrow}_Q(\alpha_i) \leq b_i \leq F^{\rightarrow}_Q(\alpha_i), \quad i=1,\dots,K \right\}.$$
\label{def:qClass}
\end{dfi}
Under simple conditions on the values $\alpha$ and $b$, quantile perturbation classes are non-empty.
\begin{lme}
Let $\mathcal{Q}$ be a perturbation class defined over $\alpha \in [0,1]^K$ and $b \in \mathbb{R}^K$, which are assumed to be ordered, without loss of generality. If
\begin{equation}
    0 \leq \alpha_1 < \dots < \alpha_K \leq 1 , \quad \text{ and } \quad b_1 < \dots < b_K,
    \label{eq:condNonempty}
\end{equation}
then $\mathcal{Q}$ is non-empty.
\label{propo:nonemptyQ}
\end{lme}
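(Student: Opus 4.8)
The plan is to prove non-emptiness constructively, by exhibiting a purely atomic probability measure supported on (a subset of) the constraint points $b_1,\dots,b_K$. First I would recall, as established in the discussion leading to~(\ref{eq:qConstr_true}), that $Q$ belongs to $\mathcal{Q}$ if and only if each $b_i$ is an $\alpha_i$-quantile of $Q$, i.e., if and only if
\begin{equation*}
    Q\bigl((-\infty,b_i)\bigr)\le\alpha_i\le Q\bigl((-\infty,b_i]\bigr),\qquad i=1,\dots,K,
\end{equation*}
with the conventions that the left inequality forces $Q((-\infty,b_i))=0$ when $\alpha_i=0$, and the right inequality forces $Q((-\infty,b_i])=1$ when $\alpha_i=1$. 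It therefore suffices to build a probability measure satisfying this finite family of inequalities.

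Second, I would set $S_0=0$, $S_i=\alpha_i$ for $1\le i\le K-1$ and $S_K=1$, put $w_i=S_i-S_{i-1}$, and take the candidate $Q=\sum_{i=1}^K w_i\,\delta_{b_i}$. Using the strict chain $\alpha_1<\dots<\alpha_K$ together with $\alpha_1\ge 0$ and $\alpha_K\le 1$ from~(\ref{eq:condNonempty}), one checks that $0=S_0\le S_1\le\dots\le S_K=1$, hence $w_i\ge 0$ for all $i$ and $\sum_i w_i=S_K=1$; so $Q$ is a genuine probability measure, with all moments finite since it is finitely supported. Because $b_1<\dots<b_K$, its cdf satisfies $Q((-\infty,b_i))=S_{i-1}$ and $Q((-\infty,b_i])=S_i$.

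Third, I would verify the quantile inequalities $S_{i-1}\le\alpha_i\le S_i$. For $2\le i\le K-1$ this reads $\alpha_{i-1}\le\alpha_i\le\alpha_i$, true by monotonicity; for $i=1$ it reads $0\le\alpha_1\le S_1$ (with $S_1=\alpha_1$ if $K\ge 2$ and $S_1=1$ if $K=1$); and for $i=K\ge 2$ it reads $\alpha_{K-1}\le\alpha_K\le 1$. All hold under~(\ref{eq:condNonempty}), so $Q\in\mathcal{Q}$ and $\mathcal{Q}\ne\emptyset$. Equivalently, one may phrase the same object through Remark~\ref{prop:uniqueQuantFunc}, defining $Q$ as the unique probability measure whose gqf is the left-continuous, non-decreasing step function equal to $b_i$ on $(S_{i-1},S_i]$.

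There is no genuine obstacle here; the argument is an interpolation plus a little bookkeeping. The only delicate points are (i) translating the class membership into the symmetric $\alpha$-quantile inequalities, being careful at the endpoints $\alpha_i\in\{0,1\}$ where $F^{\leftarrow}_Q$ and $F^{\rightarrow}_Q$ behave asymmetrically (this is precisely why~(\ref{eq:qConstr_true}) is stated with $F^{\leftarrow}$ on one side and $F^{\rightarrow}$ on the other), and (ii) the degenerate cases $K=1$ and $w_1=0$ (which occurs when $\alpha_1=0$), in which $b_1$ need not be an atom of $Q$ but the required bound $Q((-\infty,b_1))=0\le\alpha_1$ still holds.
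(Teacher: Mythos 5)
Your proof is correct and takes essentially the same constructive route as the paper's, which exhibits a non-decreasing, left-continuous function satisfying the constraints (the paper chooses the linear interpolant of the $(\alpha_i,b_i)$; you choose the corresponding step function, equivalently the atomic measure $\sum_i w_i\delta_{b_i}$) and then invokes Remark~\ref{prop:uniqueQuantFunc}. Your version verifies the quantile inequalities directly and spells out the endpoint and degenerate cases more carefully than the paper's one-line argument, but the underlying idea is the same.
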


Since ${F}^{\leftarrow}_Q$ can oftentimes be discontinuous, smoothing restrictions can be envisioned. It entails restricting $\mathcal{Q}$ to probability measures whose quantile functions are \emph{smooth} (e.g., continuous, derivable). Smooth quantile perturbation classes can be introduced as follows.
\begin{dfi}[Smooth quantile perturbation class]
Let $K \in \mathbb{N}^*$ and let $\alpha = (\alpha_1, \dots, \alpha_K)^\top \in [0,1]^K$, $b = (b_1, \dots, b_K)^\top \in \mathbb{R}^K$. Additionally, let $\mathcal{V} \subseteq \mathcal{F}^{\leftarrow}$ be a given set of smooth non-decreasing functions. The smooth quantile perturbation class $\mathcal{Q}_{\mathcal{V}} \subset \mathcal{P}(\mathbb{R})$ is the set of probability measures defined as:
$$\mathcal{Q}_{\mathcal{V}} = \left\{Q \in \mathcal{P}(\mathbb{R}) \mid F^{\leftarrow}_Q \in \mathcal{V}, \quad F^{\leftarrow}_Q(\alpha_i) \leq b_i \leq F^{\rightarrow}_Q(\alpha_i), \quad i=1,\dots,K \right\}.$$
\label{def:smoothQClass}
\end{dfi}
These classes are further discussed and illustrated in Section~\ref{sec:qcwProj_smooth}. Note that smooth perturbation classes generalize  perturbation classes since $\mathcal{Q} = \mathcal{Q}_{\mathcal{F}^{\leftarrow}}$. Therefore, without loss of generality, perturbation classes are denoted $\mathcal{Q}_{\mathcal{V}}$ in the following. In the next few paragraphs, echoing the illustrative examples in Section~\ref{sec:motivations}, we formalize two types of quantile constraints: quantile shifts and application domain dilatations.

\subsection{Two key quantile perturbations}\label{sec:qPert_shiftDila}
\subsubsection{Quantile shift}\label{sec:qPert_shift}
The \emph{quantile shift perturbation} aims at increasing as well as decreasing some of the quantiles of the initial distribution. Formally, given a quantile level $\alpha \in [0,1]$, and an initial $\alpha$-quantile $p_{\alpha} = F^{\leftarrow}_P(\alpha)$, the quantile shift entails defining values for $b$ in (\ref{eq:qConstr_true}) ranging over a compact interval $[\eta_0, \eta_1] \subseteq \Omega_X$ such that $\eta_0 < p_{\alpha} < \eta_1$. The next lemma formalizes  an expression for $b$ as a function of a perturbation intensity $\theta$ standardized on $\Theta=[-1,1]$. 
\begin{lme}
    Let $\Theta=[-1,1]$ and denote $\bm\eta=(\eta_{0},\eta_{1})$ with $\eta_0 < p_{\alpha} < \eta_1$. For $\theta\in\Theta$, let,
    $$
    b_{\alpha} (\bm\eta, \theta)=\begin{cases}
    p_{\alpha} (1+\theta) - \theta \eta_0 & \text{if } -1\leq \theta <0, \\
    p_{\alpha} & \text{if } \theta=0, \\
    p_{\alpha} (1-\theta) + \theta \eta_1 & \text{if } 0<\theta\leq 1.
    \end{cases}$$
    Then, for $Q_{\theta}\in\mathcal{P}(\mathbb{R})$ such that
    $$F^{\leftarrow}_{Q_{\theta}}(\alpha) \geq b_{\alpha}(\bm\eta, \theta) \geq  F^{\rightarrow}_{Q_{\theta}}(\alpha),$$
    one has that, $\forall \theta\in\Theta$:
    \begin{eqnarray*}
        -1 \leq \theta < 0 & \Leftrightarrow & \eta_0 \leq F^{\leftarrow}_{Q_{\theta}}(\alpha) < p_{\alpha}, \\
        \theta=0 & \Leftrightarrow &  F^{\leftarrow}_{Q_{\theta}}(\alpha) = p_{\alpha}, \\
        0<\theta \leq 1 & \Leftrightarrow &  p_{\alpha} < F^{\leftarrow}_{Q_{\theta}}(\alpha) \leq \eta_1.
    \end{eqnarray*}
    \label{lemma:quantile.shift.lemma}
\end{lme}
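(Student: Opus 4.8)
The plan is to establish the claimed chain of equivalences by first analyzing the function $b_\alpha(\bm\eta,\theta)$ itself and then transferring its properties to $F^{\leftarrow}_{Q_\theta}(\alpha)$ via the quantile constraint. The crucial preliminary observation is that, by Definition~\ref{def:genQuant}, the constraint $F^{\leftarrow}_{Q_\theta}(\alpha) \geq b_\alpha(\bm\eta,\theta) \geq F^{\rightarrow}_{Q_\theta}(\alpha)$ forces $b_\alpha(\bm\eta,\theta)$ to be one of the $\alpha$-quantiles of $Q_\theta$, and in particular $F^{\leftarrow}_{Q_\theta}(\alpha) \geq b_\alpha(\bm\eta,\theta)$. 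Conversely, reading the constraint says $F^{\leftarrow}_{Q_\theta}(\alpha)$ lies between $b_\alpha(\bm\eta,\theta)$ and something no larger than $F^{\leftarrow}_{Q_\theta}(\alpha)$; so the key quantity to track is how $b_\alpha(\bm\eta,\theta)$ compares to $p_\alpha$, $\eta_0$ and $\eta_1$.

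The computational core is to verify, for each of the three regimes of $\theta$, the monotone behavior of $\theta \mapsto b_\alpha(\bm\eta,\theta)$. For $-1 \le \theta < 0$, rewrite $b_\alpha(\bm\eta,\theta) = p_\alpha + \theta(p_\alpha - \eta_0)$; since $p_\alpha - \eta_0 > 0$ by assumption and $\theta < 0$, this gives $b_\alpha(\bm\eta,\theta) < p_\alpha$, and evaluating at the endpoints $\theta = -1$ and $\theta \to 0^-$ shows $b_\alpha(\bm\eta,\theta) \in [\eta_0, p_\alpha)$, traced out continuously and strictly increasingly. Similarly, for $0 < \theta \le 1$, write $b_\alpha(\bm\eta,\theta) = p_\alpha + \theta(\eta_1 - p_\alpha)$ with $\eta_1 - p_\alpha > 0$ and $\theta > 0$, giving $b_\alpha(\bm\eta,\theta) \in (p_\alpha, \eta_1]$. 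The case $\theta = 0$ is immediate. This establishes that $\theta$ and $b_\alpha(\bm\eta,\theta)$ satisfy exactly the three-way dichotomy on the right-hand sides of the claimed equivalences, but with $b_\alpha$ in place of $F^{\leftarrow}_{Q_\theta}(\alpha)$.

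The remaining step is the transfer from $b_\alpha(\bm\eta,\theta)$ to $F^{\leftarrow}_{Q_\theta}(\alpha)$. For the forward implications ($\Rightarrow$): given the range of $\theta$, we know the range of $b_\alpha(\bm\eta,\theta)$ from the previous paragraph, and the constraint $F^{\leftarrow}_{Q_\theta}(\alpha) \geq b_\alpha(\bm\eta,\theta)$ already delivers the lower bound; the upper bound (e.g. $F^{\leftarrow}_{Q_\theta}(\alpha) < p_\alpha$ when $\theta < 0$) needs a short argument. For $\theta = 0$ the two-sided constraint pins $F^{\leftarrow}_{Q_\theta}(\alpha)$ down to $p_\alpha$ directly (squeezing between $b_\alpha = p_\alpha$ and $F^{\rightarrow}_{Q_\theta}(\alpha) \le F^{\leftarrow}_{Q_\theta}(\alpha)$ — actually here one uses $F^{\rightarrow} \ge F^{\leftarrow}$ fails, so more care: $F^{\leftarrow}_{Q_\theta}(\alpha) \ge p_\alpha \ge F^{\rightarrow}_{Q_\theta}(\alpha) \ge F^{\leftarrow}_{Q_\theta}(\alpha)$ using that $F^{\rightarrow} \geq F^{\leftarrow}$ pointwise by Definition~\ref{def:genQuant}(ii), forcing equality). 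For the converse implications ($\Leftarrow$), suppose for contradiction that $F^{\leftarrow}_{Q_\theta}(\alpha)$ falls in the stated range but $\theta$ is in the wrong regime; then $b_\alpha(\bm\eta,\theta)$ would lie in a disjoint interval, contradicting the constraint $F^{\leftarrow}_{Q_\theta}(\alpha) \geq b_\alpha(\bm\eta,\theta) \geq F^{\rightarrow}_{Q_\theta}(\alpha)$ together with $F^{\rightarrow}_{Q_\theta}(\alpha) \ge F^{\leftarrow}_{Q_\theta}(\alpha)$.

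I expect the main obstacle to be the bookkeeping at the boundary $\theta = 0$ and the endpoints $\theta = \pm 1$: one must be careful whether the inequalities are strict, and one must consistently invoke the pointwise inequality $F^{\leftarrow}_{Q_\theta}(\alpha) \le F^{\rightarrow}_{Q_\theta}(\alpha)$ (from Definition~\ref{def:genQuant}) to close the gap between the one-sided constraint and the two-sided conclusion. The monotonicity computations themselves are routine linear algebra; the subtlety is entirely in matching strict versus non-strict inequalities across the case boundaries and in verifying that $b_\alpha(\bm\eta,\theta)$ indeed stays inside $[\eta_0,\eta_1] \subseteq \Omega_X$ so that the constraint is consistent with $Q_\theta$ being a valid perturbation.
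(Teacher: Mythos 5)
Your proposal is correct and takes essentially the same route as the paper: the paper's proof defines the piecewise-linear standardization $\theta(b)$ on $[\eta_0,\eta_1]$ and inverts it, which is exactly your monotonicity computation showing $b_{\alpha}(\bm\eta,\cdot)$ maps $[-1,0)$, $\{0\}$, $(0,1]$ bijectively onto the disjoint ranges $[\eta_0,p_{\alpha})$, $\{p_{\alpha}\}$, $(p_{\alpha},\eta_1]$. Your transfer step, squeezing the constraint with the pointwise inequality $F^{\leftarrow}_{Q_{\theta}}(\alpha)\leq F^{\rightarrow}_{Q_{\theta}}(\alpha)$ to get $F^{\leftarrow}_{Q_{\theta}}(\alpha)=b_{\alpha}(\bm\eta,\theta)$, is merely left implicit in the paper (which directly writes $F^{\leftarrow}_{Q}(\alpha)=b$), so yours is a slightly more explicit write-up of the same argument.
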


We refer to Figure~\ref{fig:sch_pert} (a.) for an illustration of this perturbation scheme, and to Section~\ref{sec:AFE_pert} for a real-world application. The quantile shift perturbation class can, for a given initial quantile level $\alpha \in [0,1]$, and valid interval bounds $\bm\eta =(\eta_0, \eta_1)$, $\eta_0<p_{\alpha}<\eta_1$, be formally defined as the collection of perturbation classes
\begin{equation}
    \mathcal{T}(\bm\eta, \theta) = \{Q \in \mathcal{P}(\mathbb{R}) \mid F^{\leftarrow}_Q(\alpha) \leq b_{\alpha}(\bm\eta, \theta) \leq F^{\rightarrow}_Q(\alpha)\}
    \label{eq:quantShift_pert}
\end{equation}
indexed by the intensity $\theta \in [-1,1]$. Each choice of $\theta$ induces a perturbation class for which the projection problem in (\ref{eq:pert_prob}) must be solved.

\begin{figure*}[t!]
    \centering
    \includegraphics[width=\linewidth, trim=0.25cm 0cm 0.25cm 0cm]{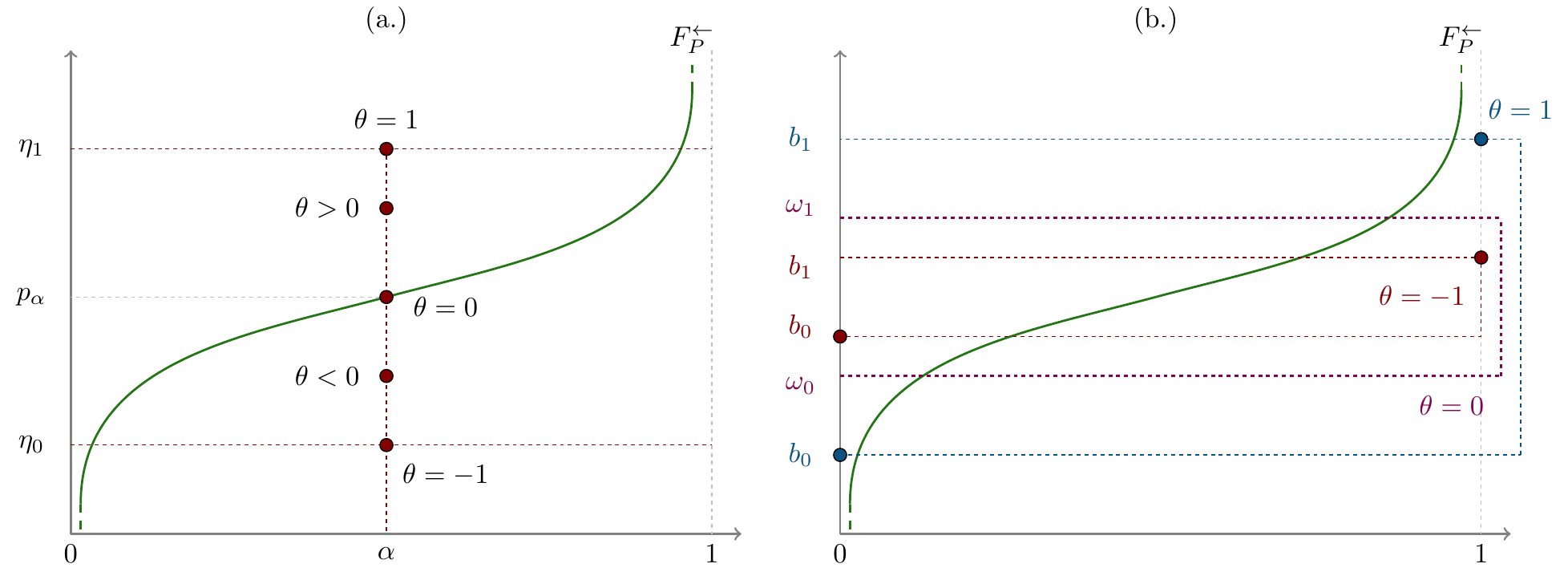}
    \caption{Quantile shift (a.) and application domain dilatation (b.) perturbation schemes. The initial quantile function is displayed in green. On the left, red points indicate different quantile shifting constraints between $\eta_0$ and $\eta_1$, leading to different intensity values $\theta$. On the right, the application domain's width (in magenta) is up to doubled (blue points) or down to halved (red points), according to an intensity parameter $\theta$ evolving in a range $\Theta=[-1, 1]$.}
    \label{fig:sch_pert}
\end{figure*}

\subsubsection{Application domain dilatation}\label{sec:qPert_dila}
Application domain dilatation consists in perturbing the bounds of the application domain of an input. For a univariate $X\sim P$ with $\Omega_X=[\omega_0, \omega_1]$, the dilatation process amounts to widening or narrowing the width (or diameter $\mbox{diam}(\Omega_X)$) of $\Omega_X$. It is similar to perturbing extreme quantile levels ($\alpha \in \{0,1\}$) of $P$ while preserving the midpoint of $\Omega_X$.  The dilatation is characterized by a parameter $\eta > 1$ controlling the rescaling magnitude of $\Omega_X$ while preserving its midpoint. In other words, one aims at finding a distribution $Q$ with support $\text{Supp}(Q) = [b_0, b_1]$ for $b_0, b_1 \in \mathcal{X}$, $b_0<b_1$, where the midpoint of $[b_0, b_1]$ is equal to the midpoint of $\Omega_X$, but such that $\mbox{diam}(Q) := \mbox{diam}(\mbox{Supp}(Q))$ is rescaled compared to $\mbox{diam}(\Omega_X)$. Similarly to quantile shift, the next lemma formalizes expressions for these two bounds as a function of a perturbation intensity $\theta$ standardized on $\Theta=[-1,1]$.   

\begin{lme}
Let $\Theta=[-1,1]$ and $\eta>1$.  For $\theta\in\Theta$, let $Q_{\theta}\in\mathcal{P}(\mathbb{R})$ such that
\begin{eqnarray*}
F^{\leftarrow}_{Q_{\theta}}(0)\ = \ b_{0} (\eta, \theta)= \begin{cases}
    \frac{1}{2}\left(\omega_0 \bigl(2-\theta(\eta^{-1} -1)\bigr) + \theta\omega_1(\eta^{-1}-1) \right) & \text{if } -1 \leq \theta < 0, \\
    \omega_0 & \text{if } \theta = 0, \\
    \frac{1}{2}\left( \omega_0\bigl(2+\theta(\eta -1)\bigr) - \theta\omega_1(\eta-1)\right) & \text{if } 0<\theta \leq 1,
    \end{cases} \\
F^{\leftarrow}_{Q_{\theta}}(1)\ = \ b_{1} (\eta, \theta)=  \begin{cases}
    \frac{1}{2}\left( \omega_1\bigl(2-\theta(\eta^{-1}-1)\bigr) + \theta\omega_0(\eta^{-1}-1)\right) & \text{if } -1 \leq \theta < 0, \\
    \omega_1 & \text{if } \theta = 0, \\
    \frac{1}{2}\left( \omega_1\bigl(2+ \theta(\eta -1)\bigr) - \theta\omega_0(\eta-1)\right) & \text{if } 0<\theta \leq 1.
    \end{cases}  
\end{eqnarray*}
Then, $\forall (\theta,\eta)\in\Theta\times[1,\infty)$,  
\begin{eqnarray*}
b_{0} (\eta, \theta)+b_{1} (\eta, \theta) & = & \omega_0 + \omega_1 \ \ \ \text{\it (midpoints equality)}
\end{eqnarray*}
and
\begin{eqnarray*}
-1\leq \theta<0 & \Leftrightarrow &  \frac{1}{\eta}\mbox{diam}(\Omega_X) \leq \mbox{diam}(Q_{\theta})<\mbox{diam}(\Omega_X), \\
\theta =0 & \Leftrightarrow & \mbox{diam}(Q_{\theta}) = \mbox{diam}(\Omega_X), \\
0< \theta \leq 1 &  \Leftrightarrow & \mbox{diam}(\Omega_X) < \mbox{diam}(Q_{\theta}) < \eta \mbox{diam}(\Omega_X).
\end{eqnarray*}
\label{lemma:dilatation.lemma}
\end{lme}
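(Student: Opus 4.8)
The plan is to reduce everything to two elementary facts about the maps $\theta\mapsto b_0(\eta,\theta)$ and $\theta\mapsto b_1(\eta,\theta)$, which on each of the three pieces of $\Theta$ are affine in $\theta$. First I would make explicit what the two conclusions are about: since $Q_\theta$ is constrained so that $\inf\mathrm{Supp}(Q_\theta)=b_0(\eta,\theta)$ and $\sup\mathrm{Supp}(Q_\theta)=b_1(\eta,\theta)$, one has $\mbox{diam}(Q_\theta)=b_1(\eta,\theta)-b_0(\eta,\theta)$ and the midpoint of $\mathrm{Supp}(Q_\theta)$ equals $\tfrac{1}{2}\bigl(b_0(\eta,\theta)+b_1(\eta,\theta)\bigr)$. (Incidentally, once this difference is shown to be positive, existence of such a $Q_\theta$ follows from Lemma~\ref{propo:nonemptyQ} with $K=2$, quantile levels $0<1$, and constraint values $b_0(\eta,\theta)<b_1(\eta,\theta)$.) It then remains to prove (i) $b_0(\eta,\theta)+b_1(\eta,\theta)=\omega_0+\omega_1$ for all $(\theta,\eta)\in\Theta\times[1,\infty)$, and (ii) a description of the range of $\theta\mapsto b_1(\eta,\theta)-b_0(\eta,\theta)$ over each subinterval of $\Theta$.

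For (i) I would simply add the two displayed formulas. The case $\theta=0$ is trivial; for $\theta\in[-1,0)$ one observes that $b_0$ and $b_1$ can be rewritten as $b_0=\omega_0+c(\omega_1-\omega_0)$ and $b_1=\omega_1-c(\omega_1-\omega_0)$ with $c=c(\eta,\theta)=\tfrac{1}{2}\theta(\eta^{-1}-1)$, and similarly for $\theta\in(0,1]$ with $c=-\tfrac{1}{2}\theta(\eta-1)$; hence $b_0+b_1=\omega_0+\omega_1$, the $c$-terms cancelling identically in $(\omega_0,\omega_1,\theta,\eta)$. Being an algebraic identity, it holds in particular in the degenerate case $\eta=1$, where $c=0$ and $b_0\equiv\omega_0$, $b_1\equiv\omega_1$.

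For (ii), the same representation gives $\mbox{diam}(Q_\theta)=b_1-b_0=(\omega_1-\omega_0)(1-2c)$, i.e. $\mbox{diam}(Q_\theta)=\mbox{diam}(\Omega_X)\,\rho(\theta)$ with $\mbox{diam}(\Omega_X)=\omega_1-\omega_0>0$, where $\rho(\theta)=1+\theta(1-\eta^{-1})$ on $[-1,0)$, $\rho(0)=1$, and $\rho(\theta)=1+\theta(\eta-1)$ on $(0,1]$. One checks that $\rho$ is continuous at $0$ and, since $\eta>1$ makes both slopes $1-\eta^{-1}$ and $\eta-1$ strictly positive, $\rho$ is strictly increasing on all of $\Theta$, with $\rho(-1)=\eta^{-1}$ and $\rho(1)=\eta$. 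Hence $\rho$ maps $[-1,0)$ bijectively onto $[\eta^{-1},1)$ and $(0,1]$ onto the complementary interval, so that no value attained on one subinterval is attained on another; multiplying through by $\mbox{diam}(\Omega_X)$ and matching each (strict or non-strict) inequality to the corresponding endpoint of the subinterval yields the stated equivalences.

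I do not expect a genuine obstacle here: the content is bookkeeping with piecewise-affine functions. The only two points that need care are (a) the reduction at the start, namely being explicit that the two extreme-quantile constraints pin down exactly the endpoints of $\mathrm{Supp}(Q_\theta)$, so that its diameter and midpoint are genuinely $b_1-b_0$ and $\tfrac{1}{2}(b_0+b_1)$; and (b) the "$\Leftarrow$" direction of each equivalence, which is where one must use the \emph{global} strict monotonicity of $\rho$ over the whole of $\Theta$ — rather than analysing the three regimes in isolation — together with careful tracking of which inequalities are strict, to rule out that a $\theta$ from a different regime could realize the same diameter.
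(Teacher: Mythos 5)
Your proof is correct and in substance the same as the paper's: both hinge on the scaling factor $\kappa=\mbox{diam}(Q_\theta)/\mbox{diam}(\Omega_X)$ (your $\rho(\theta)$) being a piecewise-affine, strictly increasing bijection from $[-1,1]$ to $[\eta^{-1},\eta]$, and on the midpoint-preservation identity reducing to an algebraic cancellation. You work in the opposite direction — verifying the stated formulas rather than deriving them from the constraint system $\{b_0+b_1=\omega_0+\omega_1,\; b_1-b_0=\kappa(\omega_1-\omega_0)\}$ via the intermediate parameter $\kappa$ and its reparametrization $\theta(\kappa)$ — and your explicit handling of the $\Leftarrow$ directions through global strict monotonicity of $\rho$ over all of $\Theta$ is a useful precision the paper leaves implicit.
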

We refer to Figure~\ref{fig:sch_pert} (b.) for an illustration of this perturbation scheme. The initial application domain is displayed in magenta and is subject to a dilatation of parameter $\eta=2$. The red constraints halve its width, and the blue constraints double it. One can additionally check that in both cases, the midpoint of the original validity domain is preserved. Section~\ref{sec:waterLevel_pert} showcases the usage of application domain dilatation perturbation in practice.

Given a perturbation class $\mathcal{Q}_{\mathcal{V}}$ defined over some modeling constraints, a collection $\mathcal{C}_{\mathcal{V}}(\theta)$ of perturbation classes driven by $\theta$ can be easily defined:
\begin{eqnarray*}
\mathcal{C}_{\mathcal{V}}(\theta) & = & \mathcal{Q}_{\mathcal{V}} \cap \mathcal{T}(\eta, \theta)
\end{eqnarray*}
where
$\mathcal{T}(\eta, \theta) = \left\{ Q \in \mathcal{P}(\mathbb{R}) \mid F^{\leftarrow}_Q(0) = b_0(\eta, \theta), F^{\leftarrow}_Q(1) = b_1(\eta, \theta) \right\}$ in the case of application domain dilatation perturbations, or $\mathcal{T}(\bm\eta, \theta)$ as defined in (\ref{eq:quantShift_pert}) in the case of a quantile shift.

Many perturbation settings can be defined by combining quantile shifts and domain dilatations. However, for the sake of simplicity, quantile shifts and domain dilatations are studied independently in Section~\ref{sec:robust_meas}.

\subsection{Perturbing multiple inputs}\label{sec:qPert_copula}

In the previous sections, perturbation classes have been defined \emph{marginally}, i.e., on uni-dimensional probability measures. Whenever multiple inputs (or features) are involved, an independence assumption is often assumed, allowing to perturb marginal distributions independently, as proposed by \cite{Lemaitre2015}. While this hypothesis facilitate the interpretation and use of models, it is questionable in practice. Therefore, one of the main challenges in ML interpretability and SA is to account for the potential dependence structure between the inputs (or features) \citep{PLISCHKE20191046}. Dependencies can often provide useful information which must be preserved through a simultaneous perturbation (e.g., to maintain the plausibility of perturbed information and avoid creating meaningless patterns \citep{Benou2020}). In ML, perturbation problems dealing with data privacy are thus particularly concerned by the preservation of dependencies \citep{Liu2006RandomPM,PAUL2021102954}. In UQ and ML frameworks, marginal quantile perturbations on $P$ must obey this requirement: the dependence structure between the initial and perturbed distributions must remain the same. 

Dependencies between random variables can be modeled using copula-based representations \citep{Nelsen2006}. Let us recall the definition of a copula (or \emph{dependence function}) $C_P:[0,1]^d\to [0,1]$. Let the random $d$-dimensional vector of inputs $X = (X_1, \dots, X_d)^\top \sim P$, where $P \in \mathcal{P}(\mathcal{X})$ and $\mathcal{X} \subseteq \mathbb{R}^d$. Assume that the marginal cdfs $F_{P_i}$, $i=1,\dots,d$ are continuous. Denote $U_{1}, \dots, U_{d}$ the random variables defined as:
$$U_i = F_{P_i}(X_i)$$
and denote $U =(U_1, \dots, U_d)^\top \sim U_P$. For any ${\bf u}=(u_1, \dots, u_d) \in [0,1]^d$, denote $H_{\bf u} = \vartimes_{i=1}^d [0,u_i]$. The copula of $X$, denoted $C_p$ is defined as:
\begin{align*}
    C_P(u) &= Pr\left(U_1\leq u_1, \dots, U_d \leq u_d \right) \\
    &= \int_{H_{\bf u}} dU_P
\end{align*}
We refer to the proof of Remark~\ref{prop:copula-results} for a proper definition of copula for empirical measures. In the following remark, we show that by means of particular monotone transportation maps inspired by optimal transportation theory, the marginal inputs can be optimally perturbed while preserving their copula. Moreover, this result is suitable for both ML and UQ applications since it also holds whenever $P$ is an empirical measure related to an observed set of data points.
\begin{rmk}
Let $\mathcal{X} \subseteq \mathbb{R}^d$, for $d$ a positive integer, and $P \in \mathcal{P}(\mathcal{X})$. Let $Q_i$ be the solution of the optimal projection problem (\ref{eq:pert_prob}) with  $\mathcal{C}=\mathcal{C_V}$, for every marginal distribution $P_i$ of $P$, $i=1,\dots,d$, and where $\mathcal{V}   \subseteq  \otimes_{j=1}^d \mathcal{F}^{\leftarrow}_j$. Let the random vectors
$$X \sim P, \quad \widetilde{X} :=T(X)$$
where 
\begin{eqnarray}
 T : &\mathcal{X}   &\rightarrow \quad \quad  \mathcal{X} \nonumber \\
         &\begin{pmatrix}x_1\\ \vdots\\ x_d \end{pmatrix} &\mapsto  \begin{pmatrix} T_1(x_1)\\\vdots \\ T_d(x_d)\end{pmatrix} \label{eq:pertMap}
\end{eqnarray}
where
$$T_j = \left(F^{\leftarrow}_{Q_j} \circ F_{P_j}\right), \quad j=1,\dots, d. $$
\begin{description}
\item[(i)] If $P$ is an empirical measure (i.e., $X$ represents a dataset), then $X$ and the perturbed dataset $\widetilde{X}$ have the same empirical copula. Moreover, the empirical measure of every perturbed marginal sample $\widetilde{X}_i$ converges towards $Q_i$, $i=1,\dots, d$.
\item[(ii)] If $P$ is atomless, and assuming additionally that $\mathcal{V}$ is such that every $F^{\leftarrow}_{Q_i}$, $i=1,\dots,d$ is strictly increasing, then the random vectors $X$ and $\widetilde{X}$ have the same copula. Moreover, each perturbed marginal $\widetilde{X}_i \sim Q_i$.
\end{description}
\label{prop:copula-results}
\end{rmk}
In other words, applying the perturbation map (\ref{eq:pert_prob}) to the inputs allows for preserving their copula and hence their dependence structure. If only an initial dataset is observed, applying $T$ to every observation results in a perturbed dataset with, for instance, the same Spearman correlation matrix. Moreover, these transportation maps achieve optimality for various univariate transportation costs (see the proof). Moreover, in the UQ framework, sampling from the perturbed inputs is as simple as applying these maps to simulated samples of $P$, which can naturally benefit from the large literature available on copula. However, it requires that the marginal gqfs $\{F^{\leftarrow}_{Q_j}\}_{1\leq j \leq d}$ are accessible, which obviously depend on the projection problem (\ref{eq:pert_prob}). As shown in the next sections, and among many other practical and theoretical motivations, the particular choice of the $2$-Wasserstein distance as a projection metric allows for characterizing the optimally perturbed marginal probability measures through their quantile functions and thus simplifies their accessibility.

\section{Wasserstein projections}\label{sec:wassProj}
This section motivates the choice of the $2$-Wasserstein distance as a projection metric for the perturbation problem (\ref{eq:pert_prob}). This distance is deeply rooted in optimal transportation theory \citep{Villani2003} and has been used successfully in many ML and deep learning applications \citep{Frogner2015, Arjovsky2017}. It has also been extensively studied as a tool for guaranteeing distributional robustness to adversarial attacks in ML \cite{Duchi2021}. In SA, it has been used to produce novel sensitivity indices \citep{Fort2021, Borgonovo2022}.

Using the Wasserstein distance imposes to work on the subset $\mathcal{P}_p(\mathbb{R}^d) \subset \mathcal{P}(\mathbb{R}^d)$ of probability measures with finite $p$-th moment. The Wasserstein distance between multi-dimensional probability measures can be computationally expensive to evaluate \cite{Peyre2019}. However, as stated in Section~\ref{sec:qPert_copula}, the fact that one wishes to preserve the copula between $P$ and its optimally perturbed counterpart $Q$ greatly simplifies the projection problem. Let $P, Q \in \mathcal{P}_p(\mathbb{R}^d)$ be two multi-dimensional probability measures, with marginals $P_1, \dots, P_d$ and $Q_1, \dots, Q_d$ respectively. Leveraging the work in \cite{Alfonsi2014}, if $P$ and $Q$ share the same copula, one can rewrite their $2$-Wasserstein distance as:
\begin{equation}
    W_p^p(P,Q) = \sum_{i=1}^d W_p^p(P_i, Q_i).
    \label{eq:w2_sameCop}
\end{equation}
Hence, finding $Q$ that minimizes $W_p^p(P,Q)$ under constraints on the marginals $Q_1, \dots, Q_d$ is equivalent to solving $d$ independent univariate projection problems with the $p$-Wasserstein distance as a projection metric. Furthermore, the transportation map defined in (\ref{eq:pertMap}) is indeed optimal \cite{Alfonsi2014} (but not unique), in the sense that it minimizes (\ref{eq:w2_sameCop}).

In other words, finding $Q$ that minimizes (\ref{eq:w2_sameCop}) such that $C_P = C_Q$ is equivalent to projecting each $P_i$ under its relevant constraints, and applying the transportation map (\ref{eq:pertMap}). As the problem reduces to perturbations of univariate marginal distributions, the $p-$Wasserstein distance may be easily computed, as recalled in the following definition.

\begin{dfi}[Wasserstein distance on the real line]
Let $p \in \mathbb{N}^*$ and $P,Q \in \mathcal{P}_p(\mathbb{R})$ be two probability measures on $\mathbb{R}$ admitting $F_P$ and $F_Q$ as probability distribution functions, respectively. Then, the $p$-Wasserstein distance between $P$ and $Q$ is:
\begin{equation*}
    W_p(P,Q) = \left(\int_0^1\left\lvert F^{\rightarrow}_P(x) - F^{\rightarrow}_Q(x) \right\rvert^p dx\right)^{1/p}
\end{equation*}
where $F^{\rightarrow}_P$ (resp. $F^{\rightarrow}_Q$) has been defined in Definition~\ref{def:genQuant}.
\label{def:wass_1D}
\end{dfi}

The following subsections argue on the specific choice of the $2$-Wasserstein distance. First, we highlight its attractive properties for conducting robust interpretability analyses. Then we investigate the solution of the perturbation problem (\ref{eq:pert_prob}), with and without regularity constraints, the latter enforced using isotonic, piece-wise polynomial approximations.

\subsection{The 2-Wasserstein  distance as a suitable perturbation discrepancy}\label{sec:w2_suitable}

The special choice of the $2$-Wasserstein  distance 
$$ W_2(P,Q) = \sqrt{\int_0^1\left( F^{\rightarrow}_P(x) - F^{\rightarrow}_Q(x) \right)^2 dx}, \quad P,Q\in \mathcal{P}_2(\mathbb{R})$$
to instantiate the perturbation problem (\ref{eq:pert_prob}) is based on the following rationale.

First, $W_2$ metricizes weak convergence on $\mathcal{P}_2(\mathbb{R})$ (\citep{Villani2003}, Section~7.2). It means that $W_2$ is a measure of proximity on a broad set of probability measures. In other words,  for any $P \in \mathcal{P}_2(\mathbb{R})$, and a sequence of probability measures $(Q_n)_{n\in \mathbb{N}^*}\in \mathcal{P}_2(\mathbb{R})$:
$$W_2(P,Q_n) \underset{n \rightarrow \infty}{\rightarrow} 0 \quad \Rightarrow \quad Q_n \overset{\text{d}}{\longrightarrow} P$$
where $\overset{\text{d}}{\rightarrow}$ denotes the convergence in distribution (or weak convergence). That is, $W_2$ allows for assessing the point-wise proximity between two probability measures, as long as both admit finite second-order moments (a current assumption in both SA and ML fields). Contrary to the KL divergence, no additional conditions on the absolute continuity of $Q_n$ w.r.t. $P$ are needed. When it comes to the perturbation problem (\ref{eq:pert_prob}), two practical advantages in favor of the $2$-Wasserstein distance can be drawn, compared to entropic projections (i.e., using the KL divergence): if $P$ is an empirical measure (i.e., purely atomic), then $Q$ is not restricted to be purely atomic; conversely, if $P$ admits a density, then it does not restrict $Q$ to admit a density. 

These benefits are key in unifying the frameworks of SA and ML interpretability: the flexibility of $W_2$ allows for greater explicit control (e.g., through smoothing restriction) on the nature of $Q$, independently of that of $P$. Results of entropic projections entail a re-weighting of the atoms when $P$ is empirical \citep{Bachoc2020}, or a perturbed density of $Q$ proportional to the one of $P$ when it is absolutely continuous w.r.t. the Lebesgue measure \citep{Lemaitre2015}. Using $W_2$ allows, for instance, to add additional atoms, or to allow $Q$ to admit a density, independently of the regularity of $P$.

Second, $W_2$ facilitates the projection of $P$ onto a quantile perturbation class  $C_{\mathcal{V}}(\theta)$. Indeed, the next proposition shows that the optimization problem is equivalent to a projection in $L^2$ of its gqf onto $\mathcal{V}$ under interpolation constraints 

\begin{propo}
Let $P\in \mathcal{P}_2(\mathbb{R})$, and $\mathcal{C}_{\mathcal{V}}(\theta)$ be a non-empty perturbation class defined by a subset $\mathcal{V}\subseteq\mathcal{F}^{\leftarrow}$. Consider for $\mathcal{V}$ the constraints system defined in $\S$ \ref{sec:qPert_def}-\ref{sec:qPert_shiftDila} associated with couples $(\alpha_i,b_i)_{1\leq i \leq K}$. In this frame, the solution $Q$ of the perturbation problem  (\ref{eq:pert_prob}), i.e.,
\begin{align}
    \begin{split}
        Q = \underset{G \in \mathcal{P}_2(\mathbb{R})}{\text{argmin }}& W_2(P,G) \ \ \ 
        \text{s.t. } \ \  G \in \mathcal{C}_{\mathcal{V}}(\theta)
        \label{eq:init_pb}
    \end{split}
\end{align}
is characterized as the unique Lebesgue-Stieljes measure induced by the cdf $F_Q$ with gqf $F_Q^{\leftarrow} \in \mathcal{F}^{\leftarrow}$:
\begin{align}
    \begin{split}
    F_Q^{\leftarrow} = \underset{L \in L^2([0,1])}{\text{argmin }}& \left\{ \int_0^1 \left(L(x) - F^{\rightarrow}_P(x) \right)^2\right\} \\
    \text{s.t. }& \quad
     L(\alpha_i) \leq b_i \leq L\left(\alpha_i^+\right), \quad i=1, \dots, K, \\
    & \quad 
     L \in \mathcal{V}.
    \label{eq:equiv_pb}
    \end{split}
\end{align}
\label{propo:wass_l2}
\end{propo}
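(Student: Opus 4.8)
The plan is to exploit the explicit one‑dimensional formula for $W_2$ given in Definition~\ref{def:wass_1D}, which already expresses the Wasserstein distance as an $L^2([0,1])$ distance between generalized quantile functions. The bridge between the two optimization problems is the identification, recalled in Remark~\ref{prop:uniqueQuantFunc}, between probability measures on $\mathbb{R}$ and their left‑continuous non‑decreasing generalized inverses in $\mathcal{F}^{\leftarrow}$: a measure $G\in\mathcal{P}_2(\mathbb{R})$ is uniquely determined by its gqf $F_G^{\leftarrow}\in\mathcal{F}^{\leftarrow}$, which moreover must be square‑integrable on $(0,1)$ precisely because $G$ has a finite second moment. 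Thus the feasible set of (\ref{eq:init_pb}) can be transported bijectively onto a subset of $L^2([0,1])$.

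First I would rewrite the objective: by Definition~\ref{def:wass_1D} with $p=2$, $W_2(P,G)^2=\int_0^1\bigl(F_P^{\rightarrow}(x)-F_G^{\rightarrow}(x)\bigr)^2\,dx$, and since $F_G^{\rightarrow}=F_G^{\leftarrow}$ Lebesgue‑almost everywhere on $(0,1)$ (Definition~\ref{def:genQuant}(ii)), the integral is unchanged if we replace $F_G^{\rightarrow}$ by $L:=F_G^{\leftarrow}$. Minimizing $W_2(P,G)$ is equivalent to minimizing its square, giving the objective $\int_0^1(L(x)-F_P^{\rightarrow}(x))^2\,dx$ of (\ref{eq:equiv_pb}). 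Next I would translate the feasibility constraints. Membership $G\in\mathcal{C}_{\mathcal{V}}(\theta)=\mathcal{Q}_{\mathcal{V}}\cap\mathcal{T}(\cdot,\theta)$ unwinds, via Definitions~\ref{def:qClass}–\ref{def:smoothQClass} and the shift/dilatation specifications of $\S$\ref{sec:qPert_shiftDila}, into two requirements on the gqf of $G$: the smoothness/shape constraint $F_G^{\leftarrow}\in\mathcal{V}$, and the quantile constraints $F_G^{\leftarrow}(\alpha_i)\le b_i\le F_G^{\rightarrow}(\alpha_i)$ for $i=1,\dots,K$. Using again $F_G^{\rightarrow}(\alpha_i)=F_G^{\leftarrow}(\alpha_i^+)$, these become exactly the interpolation‑type inequalities $L(\alpha_i)\le b_i\le L(\alpha_i^+)$ appearing in (\ref{eq:equiv_pb}), and $L\in\mathcal{V}$. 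So the bijection $G\mapsto F_G^{\leftarrow}$ carries the feasible set of (\ref{eq:init_pb}) onto exactly the feasible set of (\ref{eq:equiv_pb}), intersected with $\mathcal{F}^{\leftarrow}\cap L^2$; since $\mathcal{V}\subseteq\mathcal{F}^{\leftarrow}$, the constraint $L\in\mathcal{V}$ already forces $L\in\mathcal{F}^{\leftarrow}$, so no feasible point is lost or gained.

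Having matched objectives and feasible sets pointwise under the bijection, I would conclude that $G$ solves (\ref{eq:init_pb}) if and only if $L=F_G^{\leftarrow}$ solves (\ref{eq:equiv_pb}); in particular existence and uniqueness transfer, and the optimal $Q$ is recovered as the unique Lebesgue–Stieltjes measure whose cdf $F_Q$ has gqf $F_Q^{\leftarrow}=L^\star$, the minimizer of (\ref{eq:equiv_pb}). The remaining care is to check that a minimizer of (\ref{eq:equiv_pb}) genuinely lies in $\mathcal{F}^{\leftarrow}$ and is square‑integrable, hence corresponds to a bona fide element of $\mathcal{P}_2(\mathbb{R})$ — but this is built into the constraint $L\in\mathcal{V}\subseteq\mathcal{F}^{\leftarrow}$ together with the hypothesis, stated at the end of $\S$\ref{sec:distribPert}, that all measures under consideration have finite second moment, so $\mathcal{V}$ may be taken inside $\mathcal{F}^{\leftarrow}\cap L^2([0,1])$.

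The main obstacle is the bookkeeping around the two generalized inverses $F^{\leftarrow}$ and $F^{\rightarrow}$: one must be scrupulous that the objective is naturally phrased with $F^{\rightarrow}$ (as in Definition~\ref{def:wass_1D}) while the feasible set and the final characterization are phrased with $F^{\leftarrow}$, and reconcile them using the almost‑everywhere equality $F^{\leftarrow}=F^{\rightarrow}$ (which leaves the integral invariant) and the identity $F^{\rightarrow}(\alpha)=F^{\leftarrow}(\alpha^+)$ (which matches the right‑hand quantile inequalities). A secondary, more standard point is verifying that $\mathcal{C}_{\mathcal{V}}(\theta)\neq\varnothing$ translates into a non‑empty feasible set for (\ref{eq:equiv_pb}) — but this is exactly the hypothesis and is underpinned by Lemma~\ref{propo:nonemptyQ}; and if one additionally wants the \emph{uniqueness} of the projected measure rather than just of its gqf, one invokes strict convexity of $L\mapsto\int_0^1(L-F_P^{\rightarrow})^2$ on the convex set cut out by the (affine) constraints together with convexity of $\mathcal{V}$, which is where a mild convexity assumption on $\mathcal{V}$ may be needed.
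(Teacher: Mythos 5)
Your proposal is correct and follows essentially the same route as the paper's proof: rewrite $W_2^2$ via the one-dimensional quantile formula, use the $\mu$-a.e.\ equality $F^{\leftarrow}_G=F^{\rightarrow}_G$ to pass to the $L^2$ objective, and invoke the bijective identification (Remark~\ref{prop:uniqueQuantFunc}) between measures in $\mathcal{P}_2(\mathbb{R})$ and their gqfs to transport both the objective and the feasible set. Your version is somewhat more explicit than the paper's about how the quantile constraints translate (via $F^{\rightarrow}(\alpha)=F^{\leftarrow}(\alpha^+)$), and your closing caveat about convexity of $\mathcal{V}$ being needed for uniqueness of the projection is a fair observation that the paper leaves implicit.
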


The equivalent projection problem in (\ref{eq:equiv_pb}) echoes with the result in Proposition~\ref{prop:uniqueQuantFunc}. Projecting a measure w.r.t. the $2$-Wasserstein distance is equivalent to projecting its gqf in $L^2$. One can then leverage the vast literature on function approximations in $L^2$, especially on monotonic approximations. Moreover, as eluded at the end of Section \ref{sec:qcPert}, the proposed perturbation scheme depends heavily on the knowledge of the gqf of $Q$. Solving (\ref{eq:equiv_pb}) grants direct access to the gqf of $Q$, and thus allows applying perturbations fairly easily.

\subsection{Quantile constrained Wasserstein projections}\label{sec:qcW_proj}

This subsection presents and discusses the main results of this paper. If no smoothing constraints on $F_Q^{\leftarrow}\in \mathcal{F}^{\leftarrow}$ is enforced, the perturbation problem (\ref{eq:equiv_pb}) leads to a unique analytical solution. However, many studies require $F_Q^{\leftarrow}$ to be smooth (e.g., continuous). We propose enforce continuity by using isotonic interpolating piece-wise polynomials, which lead to a well-defined convex constrained quadratic program, easily solvable in practice.

\subsubsection{Analytical solution without smoothing restrictions}\label{sec:qcW_proj_analSol}

The following proposition provides a convenient way to solve the perturbation problem  (\ref{eq:equiv_pb}) in the case when no smoothing constraint on $F_Q^{\leftarrow}$ is enforced.

\begin{propo}
Let $P$ be a probability measure in $\mathcal{P}_2(\mathbb{R})$. Let $\mathcal{C}$ be a non-empty perturbation class characterized by a set of $K$ quantile constraints. Assume, without loss of generality, for $i=1,\dots, K$, that 
$\alpha_1 < \dots < \alpha_K$ along with  $b_1 < \dots < b_K$. Let $\beta_i = F_P(b_i)$ for $i=1,\dots, K$. Define the intervals $A_i = (c_i, d_i]$ for $i=1,\dots,K$, such that:
\begin{gather*}
    c_1 = \min(\beta_1, \alpha_1), \quad c_i = \min\Bigl[ \max(\alpha_{i-1},\beta_i), \alpha_i \Bigr], i=2, \dots, K, \\
    d_K=\max(\beta_K, \alpha_K),\quad  d_j = \max \Bigl[ \min(\beta_j,\alpha_{j+1}), \alpha_j \Bigr], j=1, \dots, K-1.
\end{gather*}
Let $A = \bigcup_{i=1}^K A_i$ and $\overline{A} = [0,1] \setminus A$. Then the problem (\ref{eq:equiv_pb}) has a unique solution which can be written as, for any $y \in [0,1]$:
\begin{align}
    F^{\leftarrow}_Q(y) = \begin{cases}
    F^{\rightarrow}_P(y) & \text{if } y \in \overline{A}, \\
    b_i & \text{if } y \in A_i, \quad i=1,\dots, K.
    \end{cases}
    \label{eq:solPert_vanilla}
\end{align}
\label{prop:analytRes}
\end{propo}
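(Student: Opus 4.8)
The plan is to reduce the problem, via Proposition~\ref{propo:wass_l2}, to the $L^2([0,1])$ projection
\begin{align*}
    F^{\leftarrow}_Q = \underset{L \in L^2([0,1])}{\text{argmin}} \int_0^1 \bigl(L(x) - F^{\rightarrow}_P(x)\bigr)^2\,dx
\end{align*}
subject to $L$ non-decreasing (i.e. $L \in \mathcal{F}^{\leftarrow}$, since here $\mathcal{V} = \mathcal{F}^{\leftarrow}$) and $L(\alpha_i) \le b_i \le L(\alpha_i^+)$ for $i=1,\dots,K$. The key preliminary observation is that because $\beta_i = F_P(b_i)$, the target function $F^{\rightarrow}_P$ already \emph{satisfies the interpolation value} $b_i$ at the point $\beta_i$ in the generalized sense: $F^{\rightarrow}_P(\beta_i^-) \le b_i \le F^{\rightarrow}_P(\beta_i)$. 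So the only obstruction to $F^{\rightarrow}_P$ itself being feasible is that the constraint is imposed at $\alpha_i$ rather than at $\beta_i$. If $\alpha_i = \beta_i$ the $i$-th constraint is already met; otherwise the monotone function must be forced to take the value $b_i$ at $\alpha_i$, and since it is non-decreasing this forces it to equal $b_i$ on the whole interval between $\alpha_i$ and $\beta_i$ (intersected appropriately with neighbouring intervals so that monotonicity across constraints is respected — this is exactly what the definition of $A_i = (c_i, d_i]$ encodes, with $c_i, d_i$ chosen as the clipped endpoints). So the candidate solution is: leave $F^{\rightarrow}_P$ untouched off $A = \bigcup_i A_i$, and flatten it to the constant $b_i$ on each $A_i$.

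The proof then proceeds in three steps. \textbf{Step 1 (feasibility).} Check that the function defined by \eqref{eq:solPert_vanilla} is non-decreasing and left-continuous (hence lies in $\mathcal{F}^{\leftarrow}$), and that it satisfies each quantile constraint $F^{\leftarrow}_Q(\alpha_i) \le b_i \le F^{\rightarrow}_Q(\alpha_i)$. Monotonicity requires the case analysis: one verifies that the clipped endpoints $c_i \le \alpha_i \le d_i$, that consecutive intervals $A_i$ are ordered, and that on $\overline A$ the restriction of $F^{\rightarrow}_P$ agrees with the constant values $b_i$ at the boundaries of $A$ — this follows from the ordering $\alpha_1 < \dots < \alpha_K$, $b_1 < \dots < b_K$ and the definition of $\beta_i$. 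The constraint check: $\alpha_i \in A_i$ so $F^{\leftarrow}_Q(\alpha_i)$ evaluates to $b_i$, or $\alpha_i$ is an endpoint and one reads off the one-sided limits. \textbf{Step 2 (optimality).} The objective $L \mapsto \int_0^1 (L(x)-F^{\rightarrow}_P(x))^2\,dx$ is strictly convex and the feasible set is convex and closed in $L^2([0,1])$, so the minimizer exists and is unique; it remains to identify it. For this I would argue pointwise/locally: on $\overline A$ there is no active constraint, and the unconstrained minimizer of the integrand is $F^{\rightarrow}_P$ itself, which is admissible there; on each $A_i$, the monotone-regression solution of $\int_{A_i}(L - F^{\rightarrow}_P)^2$ subject to $L$ constant equal to $b_i$ on $A_i$ (forced by the interpolation constraint plus monotonicity) is trivially the constant $b_i$. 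The subtlety is the coupling between blocks: one must check that the decomposition into "free region $\overline A$" and "frozen blocks $A_i$" is globally consistent with monotonicity, i.e. that the proposed solution is feasible (Step 1) and that no cheaper feasible $L$ exists — the latter via the standard variational inequality $\langle F^{\leftarrow}_Q - F^{\rightarrow}_P,\, L - F^{\leftarrow}_Q\rangle \ge 0$ for all feasible $L$, which for this problem reduces to checking a sign condition of $F^{\leftarrow}_Q - F^{\rightarrow}_P$ on each $A_i$ against admissible perturbations (the admissible perturbations on $A_i$ can only push $L$ in directions that increase the distance, because any feasible $L$ must still pass through $b_i$ and be monotone). \textbf{Step 3 (back to measures).} Invoke Remark~\ref{prop:uniqueQuantFunc} / Proposition~\ref{propo:wass_l2}: the obtained $F^{\leftarrow}_Q \in \mathcal{F}^{\leftarrow}$ is the gqf of a unique probability measure $Q \in \mathcal{P}_2(\mathbb{R})$ (square-integrability is inherited since $F^{\leftarrow}_Q$ differs from the square-integrable $F^{\rightarrow}_P$ only by bounded modifications on a set of finite measure), and this $Q$ solves \eqref{eq:pert_prob}.

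The main obstacle I anticipate is \textbf{Step 2, and specifically the bookkeeping in the definition of the blocks $A_i$ and the verification that the "freeze on $A_i$, leave alone on $\overline A$" prescription is simultaneously feasible and optimal in the presence of several interacting constraints.} When $\alpha_i < \beta_i$ for one index and $\alpha_{i+1} > \beta_{i+1}$ for the next, the flattened intervals could in principle want to overlap or cross, and it is the min/max clipping in the formulas for $c_i$ and $d_i$ that prevents this; making that argument airtight (i.e. that with these clipped endpoints the function is genuinely non-decreasing and genuinely achieves the global $L^2$ minimum rather than some further-flattened variant) is the delicate combinatorial heart of the proof. A clean way to organize it is to prove the variational inequality blockwise: show $F^{\leftarrow}_Q - F^{\rightarrow}_P$ vanishes on $\overline A$ and has, on each $A_i$, a sign structure (nonpositive to the left of where $F^{\rightarrow}_P$ crosses $b_i$, nonnegative to the right) that pairs nonnegatively with every feasible increment $L - F^{\leftarrow}_Q$ — using that such increments are constrained by monotonicity and by still having to interpolate $b_i$.
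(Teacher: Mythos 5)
Your outline matches the paper's proof: both reduce via Proposition~\ref{propo:wass_l2} to the $L^2([0,1])$ projection, propose the same candidate, and argue optimality through a sign argument on each block $A_i$. But there is a genuine error in the way you phrase the core step. You say the interpolation constraint plus monotonicity ``force $L$ constant equal to $b_i$ on $A_i$,'' and then call the restricted minimization on $A_i$ trivial. That is false: a feasible $L$ need only satisfy $L(\alpha_i)\le b_i\le L(\alpha_i^+)$ and be non-decreasing, so on $A_i$ it is free to be, say, $b_i+(y-\alpha_i)$ when $\alpha_i=c_i$. What the constraints actually force is a \emph{sign}, not a value: writing $L=b_i+h$ on $A_i$, monotonicity plus the constraint at $\alpha_i$ give $h\ge 0$ when $A_i$ lies to the right of $\alpha_i$ and $h\le 0$ when it lies to the left. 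The whole reason the clipping in $c_i,d_i$ matters is precisely so that (a) $\alpha_i$ is always an \emph{endpoint} of $A_i$ (never interior), and (b) $F^{\rightarrow}_P-b_i$ keeps a single sign throughout $A_i$, \emph{opposite} to that of $h$ — because $A_i$ stops at (or before) $\beta_i=F_P(b_i)$ and never straddles it. With that in hand the paper expands $\int_{A_i}(L-F^{\rightarrow}_P)^2=\int_{A_i}(F^{\rightarrow}_P-b_i)^2+\int_{A_i}h^2-2\int_{A_i}h(F^{\rightarrow}_P-b_i)$ and the cross term is $\ge 0$ with equality iff $h\equiv 0$; this gives the lower bound and uniqueness in one stroke, without needing the variational-inequality machinery you sketch. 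Your VI framing is mathematically equivalent but, as written, redundant once the sign opposition is established.

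Two smaller slips worth noting. The sign structure you attribute to $F^{\leftarrow}_Q-F^{\rightarrow}_P=b_i-F^{\rightarrow}_P$ (``nonpositive to the left of the crossing, nonnegative to the right'') is reversed, and moreover there is no crossing \emph{inside} $A_i$: the would-be crossing point $\beta_i$ is an endpoint (or lies beyond the other endpoint in the overlapping cases 3/4 of the paper's case analysis), which is exactly why a single-sign argument per block works. Finally, your Step~1 (feasibility: the candidate is non-decreasing and left-continuous given the clipping) is flagged but not carried out; the paper front-loads the four-case analysis of the intervals $A_i$ for this purpose, and that bookkeeping is where the real work is.
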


In order to interpret this result, illustrated in Figure~\ref{fig:sch_analyticalSol}, let us recall that when a quantile function is constant on an interval, it implies that its related probability measure admits an atom at the constant value taken by the gqf. Moreover, the mass allocated to this atom is equal to the length of the interval. Additionally, each jump of the quantile function induces an interval with no mass. The solution displayed in (\ref{eq:solPert_vanilla}) shows that on $\overline{A}$, both initial and perturbed quantile functions are equal. However, they differ on every interval $A_i$ in the following fashion: 
\begin{itemize}
    \item $Q$ have atoms at each constraint point $b_i$, $i=1,\dots, K$;
    \item Each of these atoms have mass $Q(\{b_i\}) = d_i - c_i$, for $i=1,\dots, K$;
    \item Each open interval $I_i \subset \mathbb{R}$ defined as
    \begin{equation}
    I_i = \begin{cases}
            \Bigl( \max(F^{\leftarrow}_P(\alpha_i), b_{i-1}), b_i \Bigr), & \text{when } b_i > F^{\leftarrow}_P(\alpha_i),\\
            \Bigl( b_i, \min \left(b_{i+1}, F^{\leftarrow}_P(\alpha_i)\right) \Bigr), & \text{when } b_i < F^{\leftarrow}_P(\alpha_i)
        \end{cases}
        \label{eq:interv_nomass}
    \end{equation}
    with, by convention, $b_0=-\infty$ and $b_{K+1}= \infty$, has no mass. To put it briefly, $Q(I_i) = 0$ for every $i=1,\dots, K$.
\end{itemize}

\begin{figure*}[b!]
    \centering
    \includegraphics[width=\linewidth, trim= 0.25cm 0cm 0cm 0cm]{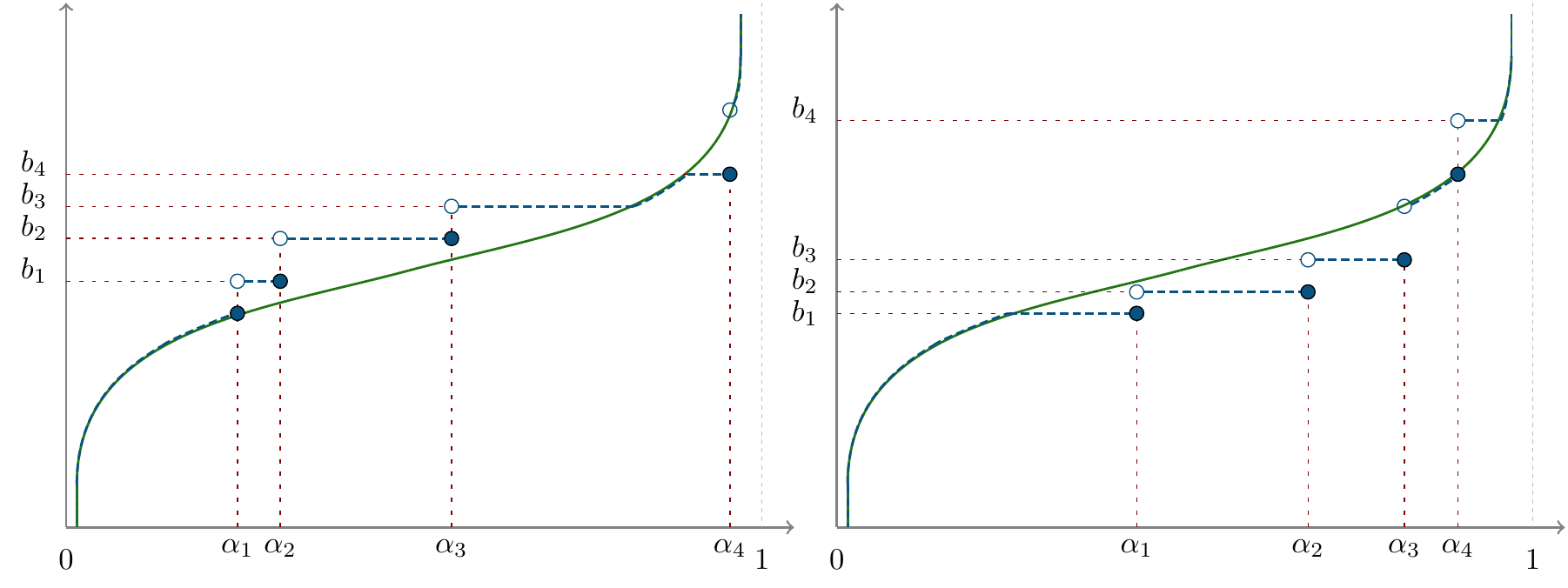}
    \caption{Characterizing quantile function of the solution of the perturbation problem (dashed blue). The initial quantile function (i.e., $F_P^{\leftarrow}$) is displayed in green, and dashed red lines identify the quantile constraints. (a.) and (b.) illustrate different possible perturbation configurations, increasing or decreasing several initial quantile values.}
    \label{fig:sch_analyticalSol}
\end{figure*}

In other words, whenever an $\alpha$-quantile $p_{\alpha}$ is shifted up to a value $b$, the perturbation entails sending every possible values in the range $(p_{\alpha}, b)$ to $b$. Hence, every value in $(p_{\alpha}, b)$ cannot be sampled according to $Q$. Moreover, the singleton $\{ b \}$ now admits a probability of being observed equal to the initial probability of this interval, i.e., $Q(\{b\}) = P\bigl( (p_{\alpha}, b) \bigr)$. When an $\alpha$-quantile is shifted down to $b$, the interval becomes $(b, p_{\alpha})$, and the same reasoning can be done.

The statement of Proposition \ref{prop:analytRes} is rather intuitive. Indeed, the  Wasserstein distance quantifies the amount of \emph{work} needed to transform a probability measure into another one \citep{Santambrogio2015}. When using $W_2$, the amount of work is quantified using the Euclidian distance, i.e., transporting a point $x_0$ to $x_1$ requires $(x_0 - x_1)^2$ units of work. This intrinsic \emph{point-wise} way of quantifying similarities can be sensed in the previous result: perturbing an $\alpha$-quantile entails giving the initial mass of an interval adjacent to $b$ to the singleton $\{ b \}$ in order to satisfy the constraint.

\subsubsection{Projection solution under smoothness restrictions}\label{sec:qcwProj_smooth}

The analytical solution provided in Proposition~\ref{prop:analytRes} presents a significant drawback: part of the application domain $\Omega_X$ of the perturbed input receives no mass. From a practical standpoint, it cannot be explored in a robustness study. It is because $\mathcal{F}^{\leftarrow}$ contains discontinuous functions. Ensuring a smoother solution relies on specifying a smooth perturbation class $\mathcal{Q}_{\mathcal{V}}$ where $\mathcal{V}$ is a set of continuous, non-decreasing functions. To solve the perturbation problem, we can take advantage of its $L^2([0,1])$ equivalent formulation \ref{eq:equiv_pb}) to project $F^{\leftarrow}_P$ onto $\mathcal{V}$. However, the main challenge is to ensure that $\mathcal{V}$ only contains monotonic functions.

We propose to projection $F^{\leftarrow}_P$ onto a space of piece-wise continuous polynomials. It implies that the support of $Q$ must be bounded. These bounds are made explicit using extremal quantile constraints (i.e., $F^{\leftarrow}_Q(0)$ and $F^{\leftarrow}_Q(1)$ are constrained to take finite values).

Formally, we aim to find a piece-wise polynomial of the form
\begin{equation}
    G(x) = \begin{cases} 
G_0(x) & \text{if } \alpha_0 := 0 \leq x < \alpha_1, \\
\vdots\\
G_i(x) & \text{if } \alpha_i \leq x < \alpha_{i+1},\\
\vdots\\
G_K(x) & \text{if } \alpha_K \leq x \leq 1 =: \alpha_{K+1}.
\end{cases}
\label{eq:def_pws_poly}
\end{equation}
under the continuity constraints at each knot of the grid $\alpha_1 < \dots < \alpha_{K}$
$$G_i(\alpha_{i+1}) = G_{i+1}(\alpha_{i+1}),\quad  i=0,\dots, K-1. $$
Here,  each $G_j \in \mathbb{R}[x]_{\leq p}$, for $j=0, \dots, K$ where we recall that $\mathbb{R}[x]_{\leq p}$ denotes the set of all  real polynomials of degree at most equal to $p$. Let $\mathcal{S}_p$ denote the space of functions defined by (\ref{eq:def_pws_poly}). Restricting the solution of the perturbation problem (\ref{eq:equiv_pb}) leads to the following optimization problem
\begin{align}
    \begin{split}
    F_Q^{\leftarrow} = \underset{L \in L^2([0,1])}{\text{argmin }}& \left\{ \int_0^1 \left(L(x) - F^{\rightarrow}_P(x) \right)^2 dx\right\} \\
    \text{s.t. }& \quad L(\alpha_i) = b_i, \quad i=1, \dots, K, \\
    &\quad L \in \mathcal{F}^{\leftarrow} \cap \mathcal{S}_p.
    \label{eq:pb_poly}
    \end{split}
\end{align}
Hence, the smooth perturbation class is defined by $\mathcal{V} =\mathcal{F}^{\leftarrow} \cap \mathcal{S}_p$. Since the design
enforced the polynomials in $\mathcal{S}_p$ to be defined on the grid $\alpha_0<\alpha_1 < \dots < \alpha_K< \alpha_{K+1}=1$, solving (\ref{eq:pb_poly}) reduces to solve several sub-problems on each sub-interval $[\alpha_i, \alpha_{i+1}], i=0,\dots, K$ of $[0,1]$. The optimization problem is indeed separable into $K+1$ independent optimization sub-problems. Each of them defines an optimal component $G_i$ of the piece-wise polynomial $G$ as defined in (\ref{eq:def_pws_poly}). 

Any of these problems can be formulated generically as follows. Let $[t_0, t_1] \subset [0,1]$, and $z_0, z_1 \in \mathbb{R}$ be interpolation values at $t_0$ and $t_1$ respectively. We aim to find the solution to the optimization sub-problem
\begin{align}
\begin{split}
    S = \underset{L \in \mathbb{R}[x]_{\leq p}}{\text{argmin }}& \left\{ \int_{t_0}^{t_1} (F^{\leftarrow}_P(x) - L(x))^2 dx \right\}\\
    \text{s.t. }& L(t_0) = z_0, L(t_1) = z_1,\\
    & L'(x) \geq 0, \quad \forall x \in [t_0, t_1].
    \label{eq:polyPb_interv}
\end{split}
\end{align}
This optimization sub-problem is nothing more than the $L^2$ isotonic (i.e., monotonic, in this case non-decreasing) polynomial approximation on a compact interval \citep{Murray2016}, with interpolation constraints at the boundaries. The interpolating polynomials have been extensively studied in the literature \citep{Fredenhagen1999}, as well as isotonic polynomial regression and approximation \citep{Schmidt1988,Wang2008}. However, to our knowledge, this specific optimization problem does not seem to have been particularly studied.

We propose to solve (\ref{eq:polyPb_interv}) using the \textit{sum-of-squares} (SOS) polynomials \citep{Lasserre2015} representation of non negative polynomials. Furthermore, we leverage in particular the representation of SOS polynomials using semi-definite positive (SDP) matrices \cite{Parrilo2010, Parrilo2012,Sobrie2018}. A similar characterization of isotonic polynomials has been proposed in \cite{Sobrie2018}. The following result, the second main outcome of this article, shows that the problem to solve falls into the category of strictly convex programs: the solution in (\ref{eq:equivCCQP}) is unique \citep{Bertsekas2016}. 

\begin{thm}
Let $[t_0, t_1] \subset [0,1]$. Let $M$ be the symmetric positive definite $\left((d+1) \times (d+1)\right)$ moment matrix of the Lebesgue measure on $[t_0, t_1]$, i.e. for $i,j=1,\dots, d+1$,
\begin{equation}
    M_{ij} = \int_{t_0}^{t_1} x^{i+j-2}dx = \frac{(t_1)^{i+j-1} - (t_0)^{i+j-1}}{i+j-1},
    \label{eq:compute_M}
\end{equation}
and denote $r \in \mathbb{R}^{d+1}$ the moment vector of $F^{\rightarrow}_P(x)$, i.e., for $i=0,\dots, d$
\begin{equation}
    r_i = \int_{t_0}^{t_1} x^iF^{\rightarrow}_P(x)dx.
    \label{eq:compute_r}
\end{equation}
Then, the vector $s^* = (s_0, \dots, s_d)^\top \in \mathbb{R}^{d+1}$ of coefficients characterizing the polynomial $S$ in (\ref{eq:polyPb_interv}) is the solution of the following convex constrained quadratic program
\begin{align}
\begin{split}
        s^* = \underset{s \in \mathbb{R}^{p+1}}{\text{argmin }} & s^\top M s -2s^\top r\\
    \text{s.t. } & s \in \mathcal{K},
    \label{eq:equivCCQP}
\end{split}
\end{align}
where $\mathcal{K}$ is an identifiable closed convex subset of $\mathbb{R}^{p+1}$ (for the sake of conciseness, $\mathcal{K}$ is characterized within the proof).
\label{thm:opti_pol_CCQP}
\end{thm}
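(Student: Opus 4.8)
The plan is to rewrite the objective in (\ref{eq:polyPb_interv}) in terms of the coefficient vector $s=(s_0,\dots,s_p)^\top$ of the candidate polynomial $L(x)=\sum_{k=0}^p s_k x^k$, and then to identify the feasible set $\mathcal{K}$ as a closed convex cone (intersected with an affine subspace) on which the resulting quadratic is strictly convex. First I would expand
\[
\int_{t_0}^{t_1}\bigl(F^{\rightarrow}_P(x)-L(x)\bigr)^2\,dx
= \int_{t_0}^{t_1} F^{\rightarrow}_P(x)^2\,dx - 2\sum_{k=0}^p s_k\int_{t_0}^{t_1} x^k F^{\rightarrow}_P(x)\,dx + \sum_{k,\ell=0}^p s_k s_\ell \int_{t_0}^{t_1} x^{k+\ell}\,dx,
\]
which, using the definitions (\ref{eq:compute_M}) and (\ref{eq:compute_r}), is exactly $\int F^{\rightarrow}_P{}^2 + s^\top M s - 2 s^\top r$. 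Since the first term is a constant independent of $s$, minimizing the integral is equivalent to minimizing $s^\top M s - 2 s^\top r$, which is the objective in (\ref{eq:equivCCQP}). Strict convexity follows because $M$ is the Gram matrix of the linearly independent monomials $1,x,\dots,x^p$ in $L^2([t_0,t_1])$, hence symmetric positive definite (one computes its Hankel form explicitly via (\ref{eq:compute_M}), or argues that $v^\top M v = \int_{t_0}^{t_1}(\sum v_k x^k)^2\,dx>0$ for $v\neq 0$ since a nonzero polynomial has finitely many roots).

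Next I would translate the constraints. The interpolation constraints $L(t_0)=z_0$, $L(t_1)=z_1$ are two linear equalities $\langle (1,t_0,\dots,t_0^p), s\rangle = z_0$ and $\langle(1,t_1,\dots,t_1^p),s\rangle = z_1$, which carve out an affine subspace. The monotonicity constraint $L'(x)\geq 0$ on $[t_0,t_1]$ is the crux: I would invoke the sum-of-squares representation of univariate polynomials nonnegative on a compact interval. Concretely, $L'$ has degree at most $p-1$, and a polynomial of degree $m$ is nonnegative on $[t_0,t_1]$ if and only if it admits a weighted-SOS (Markov–Lukács) decomposition — for $m=2\nu$, $L'(x)=\sigma_0(x) + (x-t_0)(t_1-x)\sigma_1(x)$ with $\deg\sigma_0\le 2\nu$, $\deg\sigma_1\le 2\nu-2$; for $m=2\nu+1$, $L'(x)=(x-t_0)\sigma_0(x)+(t_1-x)\sigma_1(x)$ with appropriate degree bounds — and each SOS block $\sigma_j$ is representable as $z_j(x)^\top Y_j z_j(x)$ for a PSD matrix $Y_j\succeq 0$ and a monomial vector $z_j$. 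Equating coefficients of these SOS identities gives linear relations between the entries of the $Y_j$'s and the coefficients of $L'$, which are themselves linear in $s$ (namely the $k$-th coefficient of $L'$ is $(k+1)s_{k+1}$). Hence $\{s : L'\ge 0 \text{ on }[t_0,t_1]\}$ is the image under a linear map of the spectrahedron $\{(Y_0,Y_1)\succeq 0\}$, i.e. a (projected) closed convex set; intersecting with the two affine interpolation constraints yields the desired closed convex $\mathcal{K}\subseteq\mathbb{R}^{p+1}$, which I would write out explicitly in the proof. Closedness deserves a word: the set of PSD matrices is closed, linear images of closed convex cones need not be closed in general, but here a compactness/boundedness argument (the relevant SOS matrices are bounded once the coefficients of $L'$ are fixed, since the diagonal of a PSD Gram matrix controls its entries) ensures the projection is closed.

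Finally I would assemble: the feasible set is nonempty by the hypothesis that $\mathcal{C}$ is a non-empty perturbation class (so at least one interpolating monotone polynomial exists on the sub-interval — this is where one uses that the problem (\ref{eq:pb_poly}) decoupled across sub-intervals $[\alpha_i,\alpha_{i+1}]$ with the consistency of boundary values $b_i$ inherited from Definition~\ref{def:smoothQClass} and Lemma~\ref{propo:nonemptyQ}), the objective is a strictly convex quadratic ($M\succ 0$), and $\mathcal{K}$ is closed and convex, so by standard results on strictly convex programs over closed convex sets \citep{Bertsekas2016} the minimizer $s^*$ exists and is unique; it characterizes the unique polynomial $S$ of (\ref{eq:polyPb_interv}). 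The main obstacle I anticipate is the bookkeeping in the SOS/Markov–Lukács step: getting the degree parities and the exact shapes of the Gram matrices $Y_j$ right for both even and odd $p-1$, and verifying that the coefficient-matching map is genuinely linear and that its image composed with the PSD cone is closed. Everything else — the quadratic expansion, positive-definiteness of $M$, and the final appeal to convex optimization — is routine.
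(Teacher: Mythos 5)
Your proposal follows essentially the same route as the paper: expand the $L^2$ objective into the quadratic form $s^\top M s - 2s^\top r$ (plus a constant), argue $M\succ 0$ as a Gram matrix of the monomials, encode the interpolation constraints as affine equalities, encode $L'\geq 0$ on $[t_0,t_1]$ via the Markov--Luk\'acs weighted-SOS decomposition together with the PSD Gram-matrix representation of SOS polynomials, and observe that the coefficient-matching map is linear so the feasible set is convex. Your extra care about closedness of the projected PSD cone is a point the paper actually glosses over (it only invokes a lemma that linear maps preserve convexity, not closedness); the cleanest resolution, simpler than your boundedness sketch, is that $\{s : L'(x)\geq 0 \ \forall x\in[t_0,t_1]\}$ is an intersection over $x\in[t_0,t_1]$ of closed half-spaces in coefficient space, hence closed directly, with the SDP machinery needed only to make the constraint computationally tractable.
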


As the computation of $s^*$ is a convex constrained quadratic program, it can be addressed efficiently using devoted solvers. The problem (\ref{eq:pb_poly}) can be addressed by solving $K+1$ optimization problems of the form (\ref{eq:equivCCQP}). Furthermore, computations can be done in parallel, leading to fast computational times. Notice that (\ref{eq:equivCCQP}) can be formulated and solved using \texttt{CVXR}. This is an \texttt{R} package for disciplined convex programming \citep{CVXR2020}. The pretty generic low-level logic behind the optimization scheme can be found in Algorithm~\ref{alg:solvePoly}.

\begin{algorithm}
\caption{Isotonic interpolating piece-wise continuous polynomial optimization strategy}\label{alg:solvePoly}
\begin{algorithmic}[1]
\Require $\alpha$, $b$, $F^{\rightarrow}_P$, $p$
\For{$i=0, \dots, K$} (in parallel)
    \State Compute $M$ on $[\alpha_i, \alpha_{i+1}]$ (\ref{eq:compute_M}).
    \State Compute $r$ on $[\alpha_i, \alpha_{i+1}]$ (\ref{eq:compute_r}).
    \State Setup \texttt{CVXR} constraints.
    \State $s^{(i)} \leftarrow$ Solve (\ref{eq:equivCCQP}).
    \State $G_i(x) \leftarrow \sum_{j=0}^p s^{(i)}_j x^j$
\EndFor
\State \Return $G(x) \leftarrow \sum_{i=0}^{K} G_i(x) \mathds{1}_{[\alpha_i, \alpha_{i+1}]}(x)$
\end{algorithmic}
\label{alg:CVXR_poly}
\end{algorithm}

While computing the Lebesgue moment matrix $M$ on each sub-interval of $[0,1]$ is straightforward, computing strategies for $r$, the moment vector of $F^{\leftarrow}_P$, can vary depending on the nature of $P$. Additional computational details are given in Appendix~\ref{apdx:compute_r}. The set-up of the \texttt{CVXR} constraints is detailed in the accompanying GitLab repository\footnote{\href{https://gitlab.com/milidris/qcWasserteinProj}{https://gitlab.com/milidris/qcWasserteinProj}}.

To provide a frame of reference for the practical usage of our method, the empirical computational time of solving one element of $G$, w.r.t. the polynomial degree is studied as follows. Values $t_0, t_1 \in [0,1]$, and $z_0, z_1 \in \Omega_X$ are randomly selected, and an isotonic interpolating piece-wise continuous polynomial is fitted (i.e., solving (\ref{eq:equivCCQP})). Polynomials of degrees ranging from 2 to 50 are fitted for each experiment, repeated 150 times. The execution time has been recorded and is displayed in Figure~\ref{fig:isoPoly_compTime}. One can notice that the mean computational time seems to be linear w.r.t. the polynomial degree. However, the higher the degree, the wider the $90\%$ time coverage seems to be, which may be caused by the complexity of the underlying optimization problem. In our limited testing, further numerical experiments showed that small polynomial degrees ($\leq 7$) often appear sufficient to obtain good approximations and that the approximation error tends to stabilize, w.r.t. the polynomial degree, rather rapidly.

\begin{figure}[b!]
    \centering
    \includegraphics[width=\linewidth]{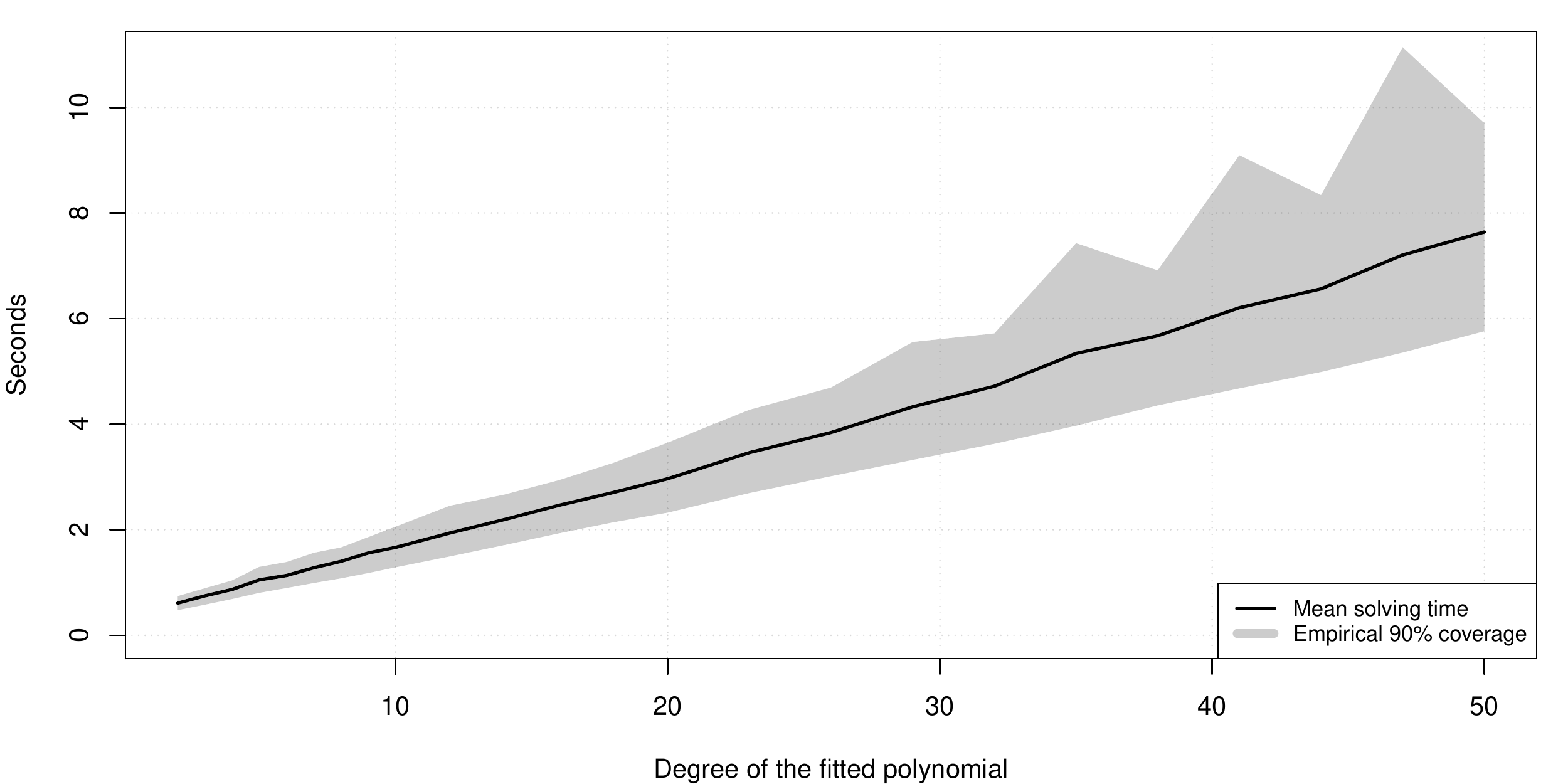}
    \caption{Computational solving time in seconds of the optimization problem~(\ref{eq:equivCCQP}) using \texttt{CVXR}, w.r.t. the chosen degree of the polynomial.}
    \label{fig:isoPoly_compTime}
\end{figure}

\begin{rmk}
The numerical solver used is \texttt{SCS V3.2.1} \cite{SCS2016}. To improve numerical stability, the quantile functions have been mapped to take values between $[-1,1]$. All the figures and all obtained optimal perturbations have been computed by performing this pre-processing step first. 
\end{rmk}

\section{Robustness diagnostics to distributional perturbations}\label{sec:robust_meas}
We illustrate the previous method on two use cases. First, in an ML context, we assess the robustness to feature perturbations of a classification model (i.e., a one-layer neural network) trained on an acoustic fire extinguisher dataset. Then, in the UQ framework, we extend Example \ref{ex:riverWaterLevel}, studying the impact of input perturbation on the output of a numerical hydrological model predicting a river water level.

\begin{rmk}
In the following applications, isotonic polynomial smoothing is applied with an arbitrarily high degree. We chose the degree based on an empirical inspection of the solutions and by ensuring that the approximation error remains relatively the same w.r.t. higher degrees.
\end{rmk}

Our method is in line with the general SIPA (Sampling, Intervention, Prediction, Aggregation) framework for model-agnostic interpretation proposed in \cite{Scholbeck2020}. More precisely, the four step can be broken down as follows:
\begin{enumerate}
    \item {\bf Sampling}: In Section~\ref{sec:AFE}, we have access to an i.i.d. sample of the inputs. In Section~\ref{sec:waterLevel}, a sampling strategy is used to simulate data.
    \item {\bf Intervention}: In both use-cases, we define meaningful quantile perturbations, and solve the subsequent perturbation problem. Then we apply the transformation in (\ref{eq:pertMap}), resulting in perturbed samples.
    \item {\bf Prediction}: We predict using the available black-box model (a neural network in \ref{sec:AFE}, and a numerical model in Section~\ref{sec:waterLevel}), resulting in perturbed outputs.
    \item {\bf Aggregation}: The resulting perturbed outputs are aggregated w.r.t. the perturbation intensity, leading to global robustness metrics, or are simply plotted against the initial and perturbed of the perturbed input values, allowing for local robustness assessments.
\end{enumerate}

\subsection{ML application: Acoustic fire extinguisher dataset}\label{sec:AFE}
The acoustic fire extinguisher dataset is composed of 15390 experiments of fire extinguishing tests of three different liquid fire fuels. Amplified subwoofers are placed in a collimator with an opening. When activated at different frequencies, the acoustic waves produce an escape of air through the opening, which is used to extinguish fires. Three features are set using a design of experiment (DoE), and two are measured using appropriate equipment. For more details on the experiments settings, one can refer to the in-depth descriptions in \cite{Koklu2021, Taspinar2022}. Table~\ref{tab:AFE} gives additional details on the nature of the features.

\begin{table}[!h]
    \begin{tabularx}{\textwidth}{lccX}
    \hline
         Feature & Unit & Mode of measure & Description \\
         \hline \hline 
         &&& \\
         TankSize & cm & DoE & Discrete feature (5 levels) describing the size of the tank containing the fuel.\\
         &&& \\
         Fuel & & DoE & Type of fuel used (3 levels: Gasoline, Kerosene, Thinner). \\
         &&& \\
         Distance & cm & DoE & Distance of the flame to the collimator opening. \\
         &&& \\
         Frequency & Hz & DoE & Sound frequency range. \\
         &&& \\
         Decibel & dB & Measured & Sound pressure level. \\
         &&& \\
         Airflow & m/s & Measured & Airflow created by the sound waves.\\
         &&& \\
         \hline
    \end{tabularx}
    \caption{Description of the features of the acoustic fire extinguisher dataset.}
    \label{tab:AFE}
\end{table}
For each experiment, a binary output variable $Y$ is measured, representing the result of the experiment, i.e., whether the fire has been put out ($Y=1$) or not ($Y=0$). The two output classes are relatively balanced (i.e., $48.97\%$ of the observations describe effectively put out fires). The distribution, correlation structure, and relationship of the continuous features with the output are represented in Figure~\ref{fig:AFE_features}. Some variables seem fairly correlated (in Spearman's sense, i.e., the linear correlation of the rank-transformed data), such as Frequency and Decibel, as well as Distance and Airflow. 

\begin{figure}[b!]
    \centering
    \includegraphics[width=\linewidth]{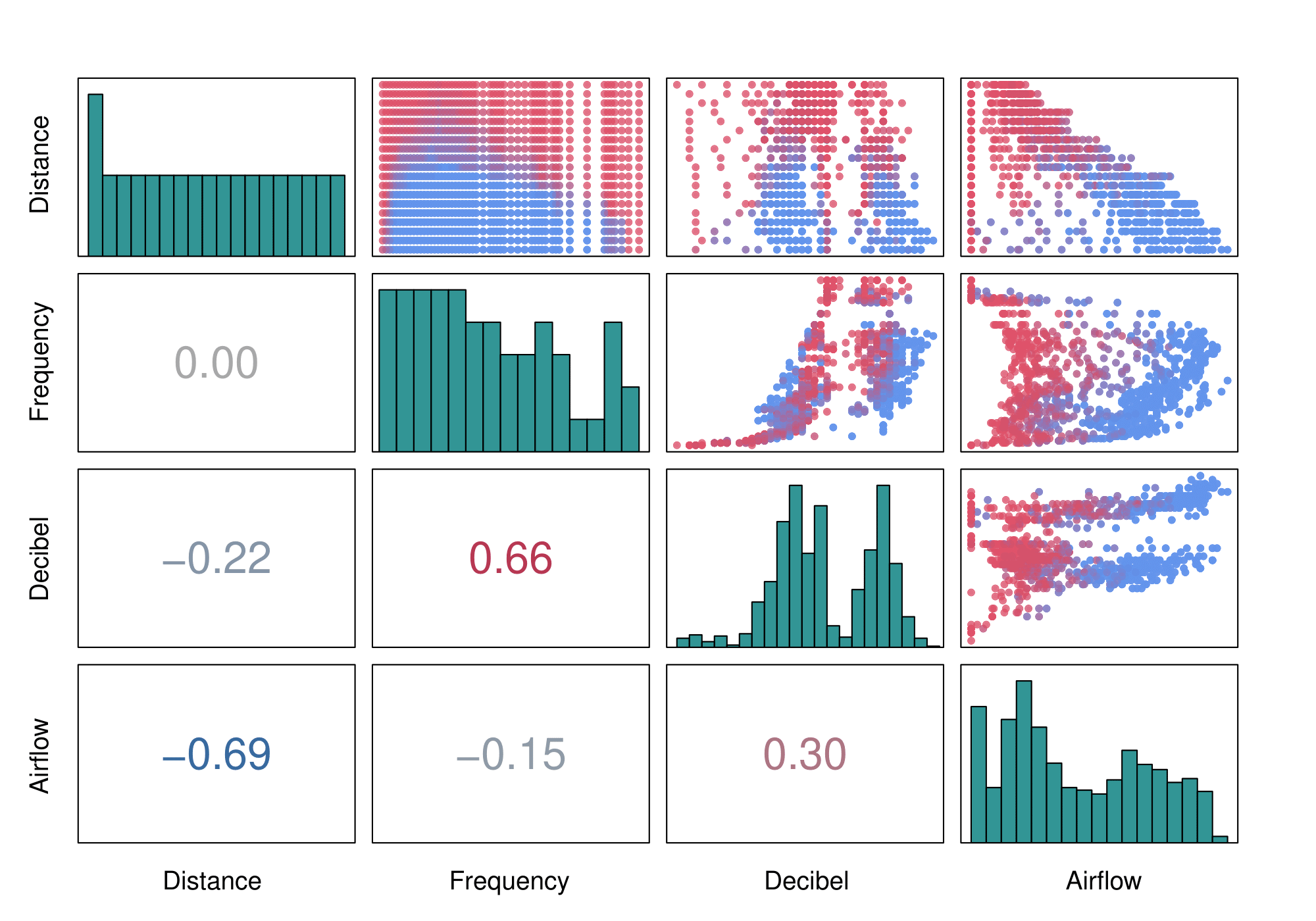}
    \caption{Histogram, cross-scatterplot, and Spearman's correlation coefficient of the input features. Red dots represent observations resulting in $Y=0$, and blue dots are observations resulting in $Y=1$.}
    \label{fig:AFE_features}
\end{figure}

The classification black-box model is a one-layer neural network (composed of 100 neurons), trained on 500 epochs, with a learning rate of $10^{-4}$, similar to the study conducted in \cite{Taspinar2021}. $5\%$ of the data has been randomly selected to serve as validation data. The model resulted in a good prediction accuracy: $95.15\%$ of the training data and $94.26\%$ of the validation data are correctly classified. Figure~\ref{fig:nn_valid} depicts the ROC curve and confusion matrix of the trained black-box model. The model's predictive performance can be validated globally with an AUC of $0.992$ and less than $3\%$ of type 1 and 2 prediction errors.

\begin{figure}[t!]
    \centering
    \includegraphics[width=0.49\textwidth]{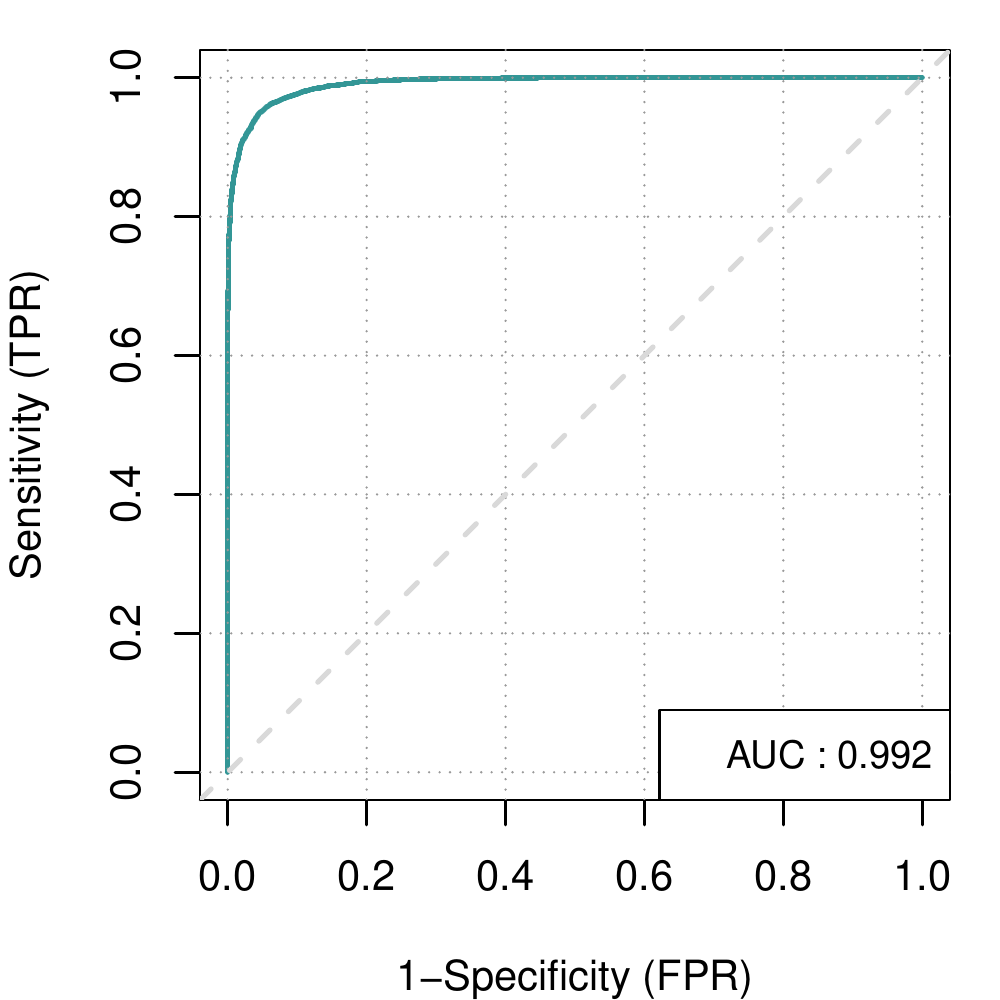}
    \includegraphics[width=0.49\textwidth]{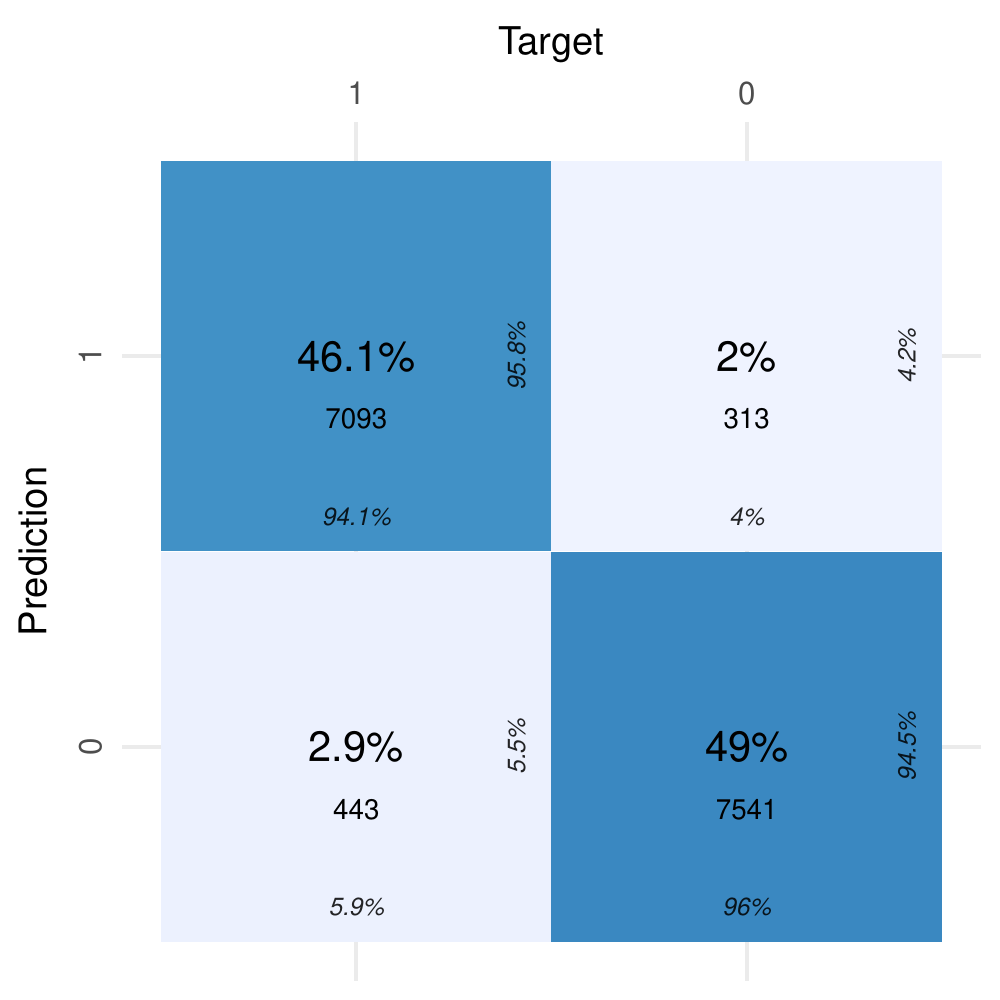}
    \caption{ROC curve (left) and confusion matrix (right) of the neural network model trained on the acoustic fire extinguisher dataset.}
    \label{fig:nn_valid}
\end{figure}

However, global predictive performance only focuses on effectively observed data points. It is mandatory to study the model's behavior on predictions outside of these points to improve confidence in its usage. Hence, one can be interested in the robustness of the model w.r.t. perturbations on its inputs. Note that ground truths cannot be observed for perturbed data. However, the impact, either globally or locally, of these perturbations on the predictive behavior of the model can still be assessed using predictions on the perturbed data. In the following, the feature perturbation scheme is detailed and motivated, and then the model's behavior is studied under these perturbations.

\subsubsection{Perturbation strategy}\label{sec:AFE_pert}
We propose a  straightforward perturbation strategy. Only the Airflow feature is perturbed. The perturbation is composed of the $K=14$ constraints:
\begin{itemize}
    \item The application domain of the feature is preserved by setting both the $0$ and $1$-quantiles to the minimum and the maximum observed value of the dataset. 
    \item The left tail of the distribution is preserved by constraining every quantile of level $0.1$ to $0.6$ with a step of $0.05$ to interpolate the empirical quantile function of the feature.
    \item A quantile shift perturbation is put on the $0.8$-quantile of the feature, with an initial value of $F_P^{\leftarrow}(0.8)=12$, being shifted between 9.5 ($\theta=-1$) and 14.5 ($\theta = 1$).
\end{itemize}
Additionally to these perturbations, smoothness is enforced using piece-wise continuous isotonic polynomials, as described in Section~\ref{sec:qcwProj_smooth}. The degree of each monotone polynomial has been arbitrarily chosen to be up to $9$. The constraints and the resulting quantile-constrained Wasserstein projections are illustrated in Figure~\ref{fig:Airflow_qShift} for intensity values $-1$, $0$, and $1$. 
\begin{figure}[t!]
    \centering
    \includegraphics[width=\linewidth]{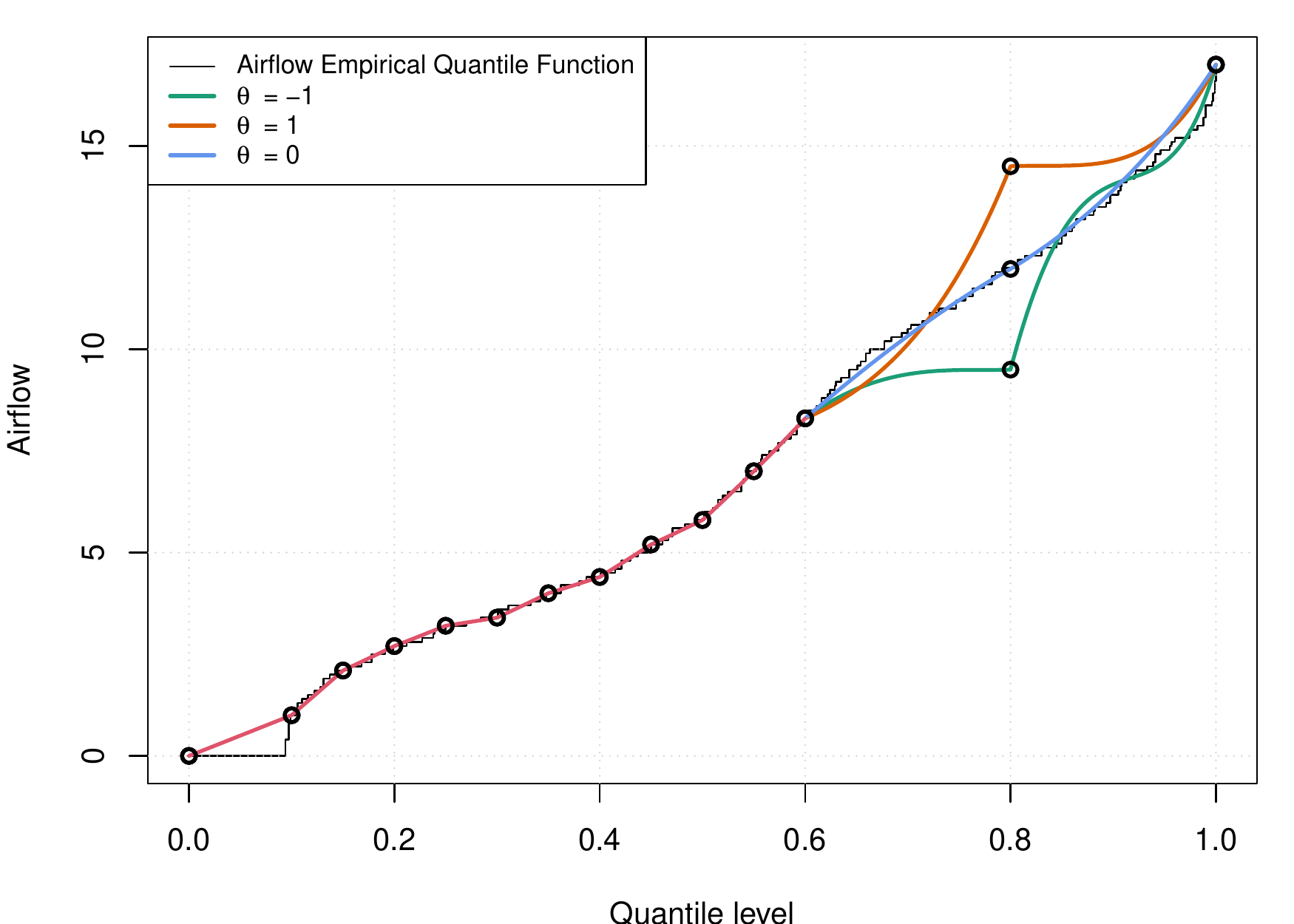}
    \caption{Quantile functions of the optimally perturbed Airflow feature, with a chosen polynomial degree equal to $9$. The red line represents the preserved tail; meanwhile, the green, blue and yellow lines represent various quantile shift intensity levels ($\theta=-1$, $\theta=0$, and $\theta=1$, respectively).}
    \label{fig:Airflow_qShift}
\end{figure}

The perturbed quantile level has been chosen in relation to the model's decision boundary: no observation in the initial dataset with an Airflow value exceeding $12.3$m/s is classified by the model as not extinguishing the fire, regardless of the values taken by the other features. Perturbing the $0.8$-quantile of the Airflow variable allows for exploring the model's behavior in regions close to this decision boundary. More importantly, it allows assessing the predictive robustness of the neural network in this region under perturbations of varying magnitude. Generally, this quantile shift regime can be understood as a perturbation on the right tail of the initial distribution, i.e., on values higher than the $0.6$-quantile.

\subsubsection{Model robustness assessment}
First, we are interested in assessing the robustness of the neural network model in a global fashion. The left plot of Figure~\ref{fig:globRob_props} presents the proportion of perturbed observations with predictions of $1$ w.r.t. to the intensity of the perturbation. Notice that the proportion is increasing, along with $\theta$. Hence, decreasing the value of the initial $0.8$-quantile tend to result in a lower number of predicted put-out fires, and increasing its value results in an increasing number of predicted put-out fires. This interpretation is rather intuitive: all other things being equal, a higher Airflow value entails a higher chance of predicting $Y=1$. The right plot of Figure~\ref{fig:globRob_props} presents the proportion of prediction shift with respect to $\theta$. Notice that the higher the magnitude of the perturbation (either positively or negatively), the more predictions tend to change, and the closest $\theta$ is to $0$, the fewer predictions shift. This observation informs on the predictive stability in the vicinity of the decision boundary of the model: small perturbations tend to result in less prediction shift than bigger perturbations.

\begin{figure}[b!]
    \centering
    \includegraphics[width=\linewidth]{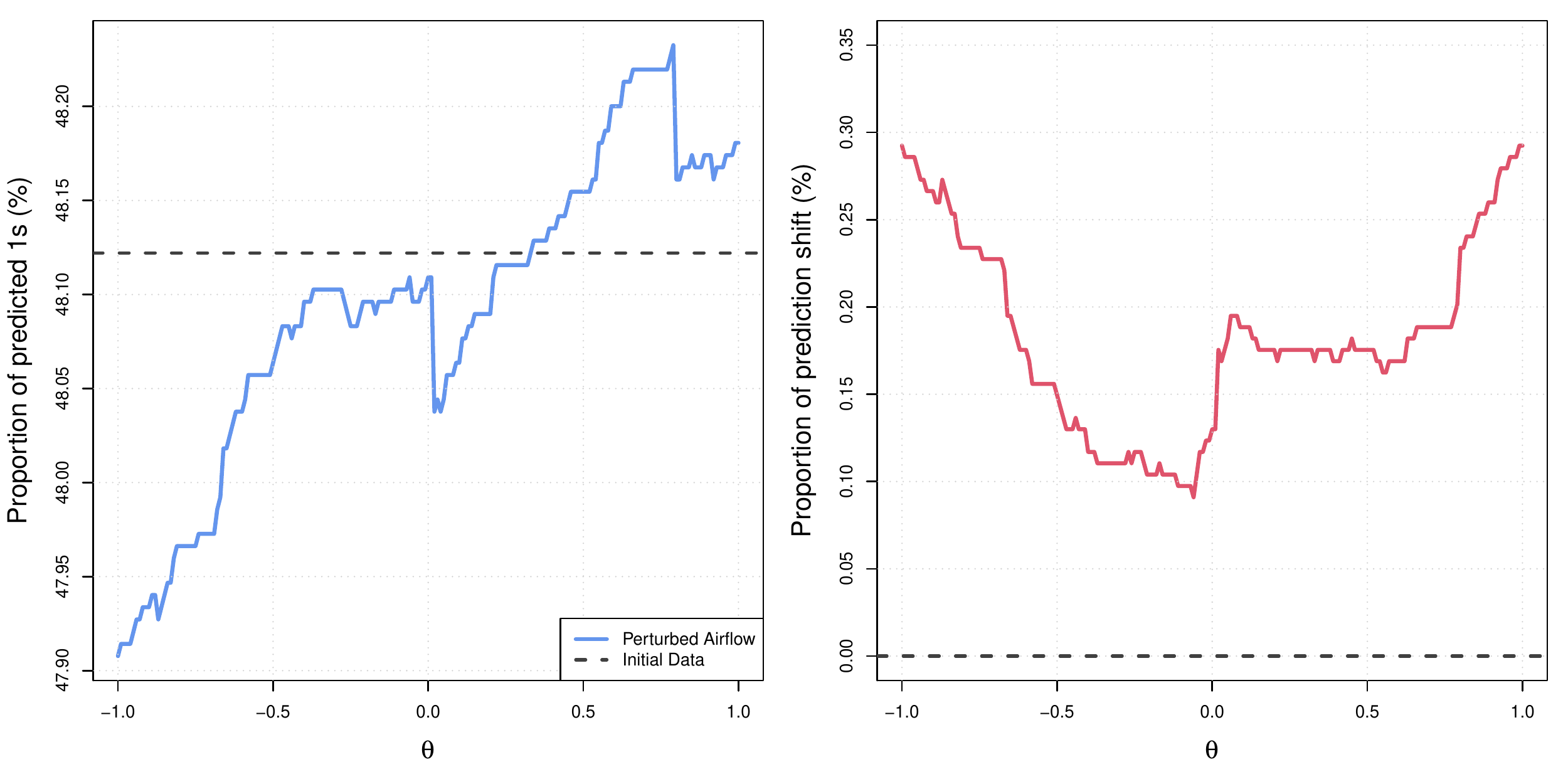}
    \caption{Proportion of predictions $Y=1$ (left) and proportion of classification prediction shift (right) compared to the initial data, w.r.t. the perturbation intensity parameter $\theta$.}
    \label{fig:globRob_props}
\end{figure}

Second, we study the robustness of global SA results. Figure~\ref{fig:globRob_TSE} presents the target Shapley effects \cite{IliCha2021}, a global SA input importance measure for binary black-box model outputs with dependent inputs, w.r.t. the perturbation intensity parameter $\theta$. These indices have been computed using the nearest-neighbor (KNN) approach proposed in \cite{Broto2020} (with an arbitrarily chosen number of neighbors equal to $6$). Recall that our perturbation method allows preserving the empirical copula between the features, justifying the use of the KNN approach. Studying the behavior of importance measures allows to verify if the feature importance order shifts due to the perturbations, i.e., if the importance hierarchy between the inputs changes due to perturbations around the model's decision boundary. The left barplot presents the initial target Shapley effects, computed on the model's prediction on the observed data, and the right plot presents their behavior under the airflow perturbation. One can notice that the importance indices remain stable w.r.t. $\theta$. This result indicates that the global SA of the neural network is robust to the distributional perturbations driven by $\theta$. Hence, we can be confident in the importance measures under uncertainties in the region near the model's decision boundary.

\begin{figure}[t!]
    \centering
    \includegraphics[width=\linewidth]{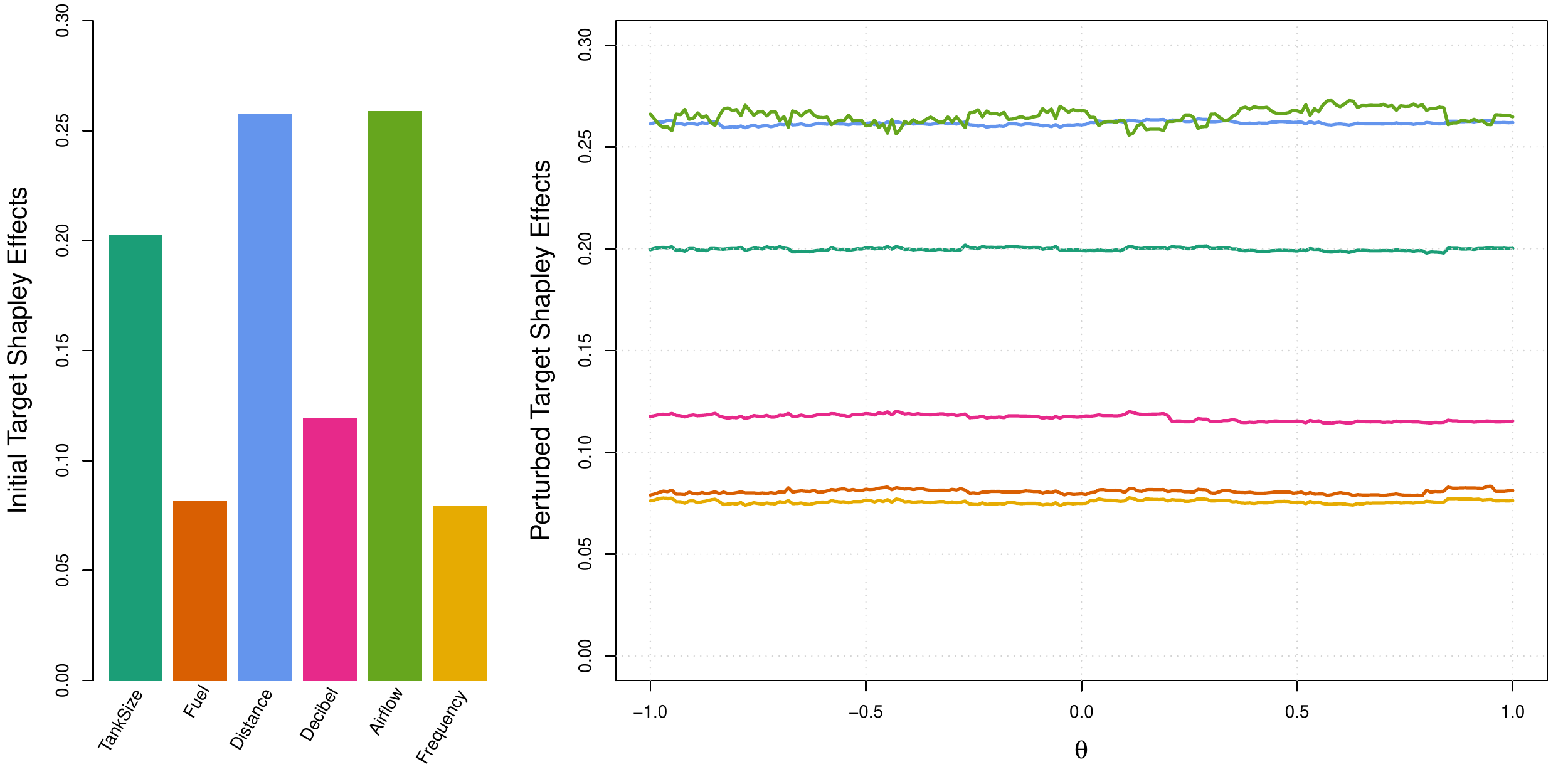}
    \caption{Initial (left) and perturbed (right) target Shapley effects, w.r.t. the intensity parameter $\theta$, using the same color panel.}
    \label{fig:globRob_TSE}
\end{figure}

Finally, the robustness of the neural network can also be assessed locally. Figure~\ref{fig:locRob_pertMagni} allow visualizing whether a prediction has shifted w.r.t. to the effective magnitude of the perturbation. The black line indicates no perturbation change: the airflow value of an observation has been mapped to itself. For a fixed initial airflow datapoint, its vertical distance to the black line indicates the (signed) magnitude of the applied perturbation. Red points indicate that the prediction has shifted w.r.t. the initial dataset, and blue points indicate no predictive change. One can note the presence of red dots close to the black line around the prediction boundary of the model. Small perturbations for observations with airflow values around $12$, all other features being equal, can lead to a prediction change. Hence, the confidence in predictions on observations in this region can be questioned. However, notice the lack of red dots near the black line for airflow values on the interval $[13,17]$ and on the interval $[7,10]$. Hence, we can be confident in the model's predictions for Airflow values on these intervals, which seem to be robust w.r.t. the quantile shift.

\begin{figure}[t!]
    \centering
    \includegraphics[width=\linewidth]{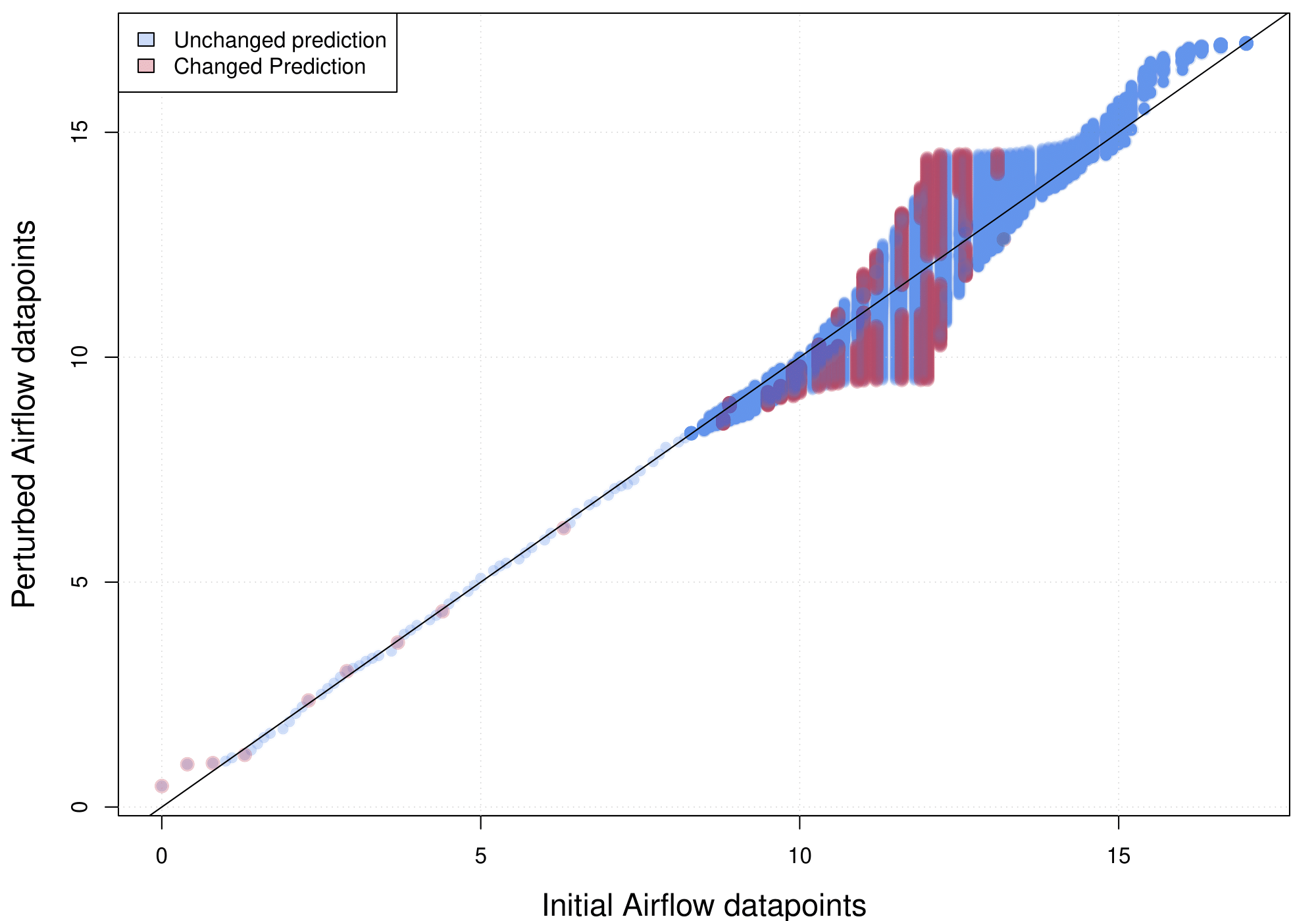}
    \caption{Perturbed datapoints w.r.t. their initial values. The black line represents no perturbation. The red and blue dots represent either a classification shift due to the perturbation or no classification shift, respectively.}
    \label{fig:locRob_pertMagni}
\end{figure}

One may notice the presence of small perturbation resulting in prediction changes for small airflow values. However, since the perturbation scheme focuses on exploring the model's behavior around its Airflow decision boundary, their interpretation is voluntarily omitted: a different perturbation scheme involving perturbing the left tail of the airflow distribution would be advised.

In summary, besides the model's good prediction accuracy, it also seems globally robust to distributional perturbation focused around its decision boundary. Moreover, we can be confident in the feature importance indices since they remain relatively similar under perturbation. Locally, the model prediction seems stable w.r.t. small perturbations, except on a small interval around its decision boundary (a behavior generally expected in ML applications). In conclusion, this robust interpretability analysis further assesses the model's behavior beyond classical accuracy metrics and provides additional arguments for its validation.

\subsection{SA application: Simplified hydrological  model}\label{sec:waterLevel}
This use-case focuses on a simplified model of the water level of a river. This model has been extensively used in the safety and reliability of industrial sites, where the occurrence of a flood can lead to dramatic human and ecological consequences. It consists of a substantial simplification of the one-dimensional Saint-Venant equation, with a uniform and constant flow rate, inspired from  \cite{Iooss2011,Fu2017}. The maximal annual water level from sea level is modeled as:
$$Y = Z_v + \left(\frac{Q}{B K_s \sqrt{\frac{Z_m - Z_v}{L}}}\right)^{3/5}$$
where the description of each input variable and their explicit marginal probabilistic structure is detailed in Table~\ref{tab:RivWatLev_distribs}. 
\begin{table}[ht!]
    \centering
    \begin{tabularx}{\textwidth}{lcccX}
        \hline
        Input & Unit & Distribution & Application Domain & Description\\
        \hline
        \hline
        &&&&\\
        $Q$ & m$^3$/sec& $\mathcal{G}(1013,558)$ trunc. & $[500,3000]$ & River maximum annual water flow rate. \\
        &&&&\\
        $K_s$ &  & $\mathcal{N}(35,5)$ trunc. &$[20, 50]$ & Strickler riverbed roughness coefficient.\\
        &&&&\\
        $Z_v$ & m & $\mathcal{T}(49,50,51)$ & $[49,51]$ & Downstream river level.\\
        &&&&\\
        $Z_m$ & m & $\mathcal{T}(54,55,56)$ & $[54,56]$ & Upstream river level.\\
        &&&&\\
        $L$ & m & $\mathcal{T}(4990,5000,5010)$ & $[4990,5010]$ & River length.\\
        &&&&\\
        $B$ & m & $\mathcal{T}(295,300,305)$& $[295,305]$ & River width.\\
        &&&&\\
        \hline
    \end{tabularx}
    \caption{Inputs of the simplified river water level model and their explicit marginal distributions. $\mathcal{G}, \mathcal{N}, \mathcal{T}$ denote Gumbel, Normal and Triangular distributions, respectively (trunc means truncated).}
    \label{tab:RivWatLev_distribs}
\end{table}

Additionally, similarly to \cite{Chastaing2012}, a dependence structure is modeled using a Gaussian copula, with the covariance matrix
$$R_P = \begin{pmatrix}
1&0.5&0&0&0&0 \\
0.5&1&0&0&0&0 \\
0&0&1&0.3&0&0 \\
0&0&0.3&1&0&0 \\
0&0&0&0&1&0.3\\
0&0&0&0&0.3&1
\end{pmatrix}, \quad \text{where} \quad \begin{pmatrix}Q \\ K_s \\ Z_v \\ Z_m \\ L \\ B \end{pmatrix} \sim P.$$

Echoing Example~\ref{ex:riverWaterLevel}, we are interested in uncertainties on the application domain of the $K_s$ input, i.e., the Strickler riverbed roughness coefficient (which is the inverse of the Manning coefficient). Its value can range from around $3$ (proliferating algae) to around $90$ (smooth concrete). We refer the interested reader to the in-depth study in \cite{Fu2017} for more details on the determination and inference of the Strickler coefficient for realistic rivers. In this use-case, initially, the application domain $\Omega_X$ of the Strickler coefficient is set between the values of $20$ and $50$, corresponding to situations from very cluttered riverbeds to earthen channels. However, to illustrate our robustness method, epistemic uncertainties are assumed to affect this application domain.

\subsubsection{Perturbation strategy}\label{sec:waterLevel_pert}
In this use case, the three following inputs are perturbed. The river maximum annual water flow rate $Q$, the river length $L$, and the upstream river level $Z_m$ are subject to the following quantile constraints:
\begin{itemize}
    \item Quantile perturbations on $Q$:
    \begin{itemize}
        \item Shift of the application domain from $[500,3000]$ to $[500,3200]$;
        \item Preserve the median of the distribution;
        \item Increase the initial $0.15$-quantile by $75$;
        \item Decrease the initial $0.75$-quantile by $125$;
    \end{itemize}
    \item Quantile perturbations on $L$:
    \begin{itemize}
        \item Shift the application domain from $[4990,5010]$ to $[4988,5012]$;
        \item Preserve the median of the distribution;
    \end{itemize}
    \item Quantile perturbations on $Z_m$:
    \begin{itemize}
        \item Preserve the application domain and the median of the initial distribution;
        \item Increase the $0.8$ and $0.9$-quantiles by $0.1$;
        \item Decrease the $0.25$-quantile by $0.05$.
    \end{itemize}
\end{itemize}
The initial input distributions, their application domain, and the optimally perturbed results are illustrated in Figure~\ref{fig:LQZM_perts}. These constraints are mainly enforced to illustrate that multiple inputs can be perturbed simultaneously while preserving their dependence structure.
\begin{figure}[b!]
    \centering
    \includegraphics[width=\linewidth, trim={0cm 2cm 0cm 0cm}]{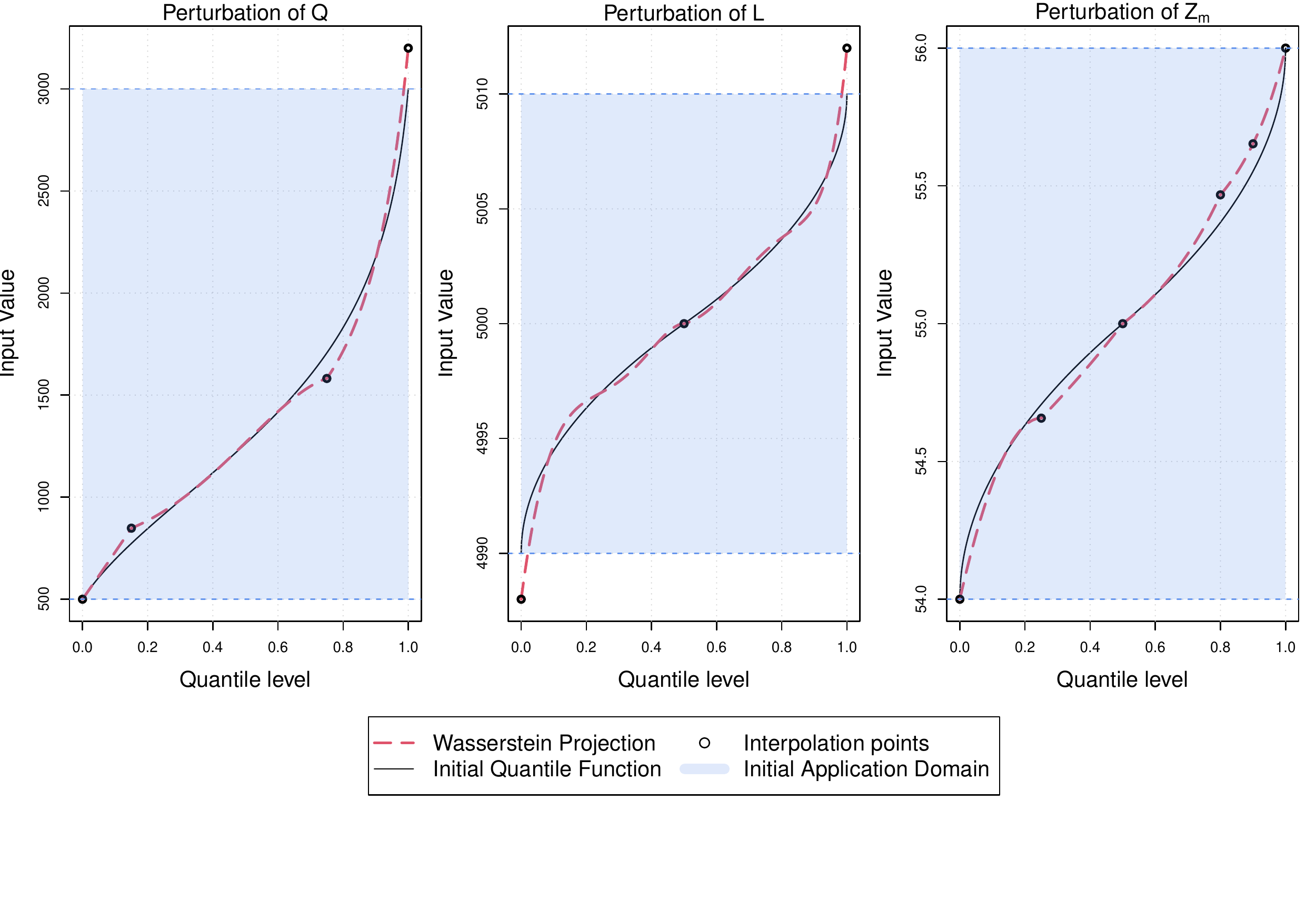}
    \caption{Initial quantile functions, application domains, and corresponding optimally perturbed quantile functions of the $Q$, $L$, and $Z_m$ inputs.}
    \label{fig:LQZM_perts}
\end{figure}

In addition to these constraints, the Strickler coefficient $K_s$ is subject to an application domain dilatation perturbation, with a scaling parameter $\eta=2$. Each perturbation intensity represents a degree of uncertainty on the type of riverbed roughness. When $\theta = -1$, the width of the initial application domain is halved, i.e., from $[20,50]$ to $[27.5, 42.5]$, which can be interpreted in a situation where the epistemic uncertainty on the riverbed roughness is narrower, between a slow winding natural river, up to a plain river without shrub vegetation. When $\theta=1$, the epistemic uncertainty on the riverbed is much wider, with an application domain equal to $[5, 65]$, which depicts a range of riverbed roughness from proliferating algae up to smooth concrete. Figure~\ref{fig:Ks_perts} illustrates the initial $K_s$ distribution, along with the optimally perturbed quantile functions for $\theta$ being equal to $-1$ and $1$.

\begin{figure}[b!]
    \centering
    \includegraphics[width=\linewidth, trim={0cm 2cm 0cm 0cm}]{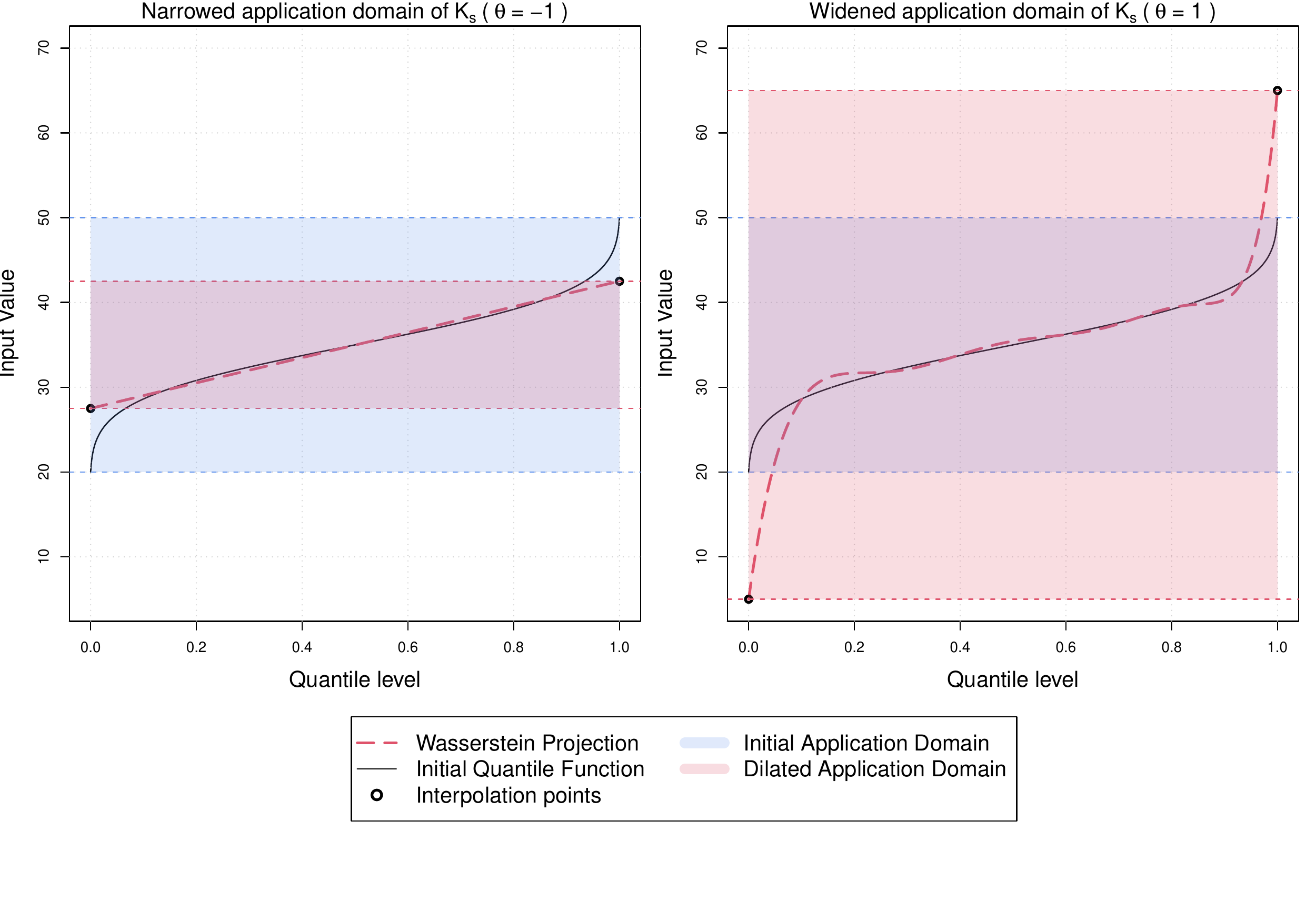}
    \caption{Initial quantile function, application domain and corresponding optimally perturbed quantile functions for $K_s$, for $\theta$ being equal to $-1$ (left) and $1$ (right), for a scaling parameter $\eta=2$.}
    \label{fig:Ks_perts}
\end{figure}
Additionally, the perturbations' smoothness is enforced using piece-wise continuous isotonic polynomials of degree up to $12$, chosen arbitrarily.

\subsubsection{Robustness of the sensitivity analysis}
From a global standpoint, one can be interested in the impact of the distributional perturbations on key statistics of the random output of the river water level model. Figure~\ref{fig:statsY} presents estimated values for the mean, standard deviation, $0.025$ and $0.975$-quantiles (shown by the $95\%$ coverage), and minimum and maximum values of the random output, computed on $10^5$ Monte Carlo samples, w.r.t. the dilatation intensity $\theta$. These values are compared to the reference ones according to the initial distribution of the inputs, estimated on a $2 \times 10^5$ Monte Carlo sample.

\begin{figure}[t!]
    \centering
    \includegraphics[width=\linewidth, trim={0cm 2cm 0cm 0cm}]{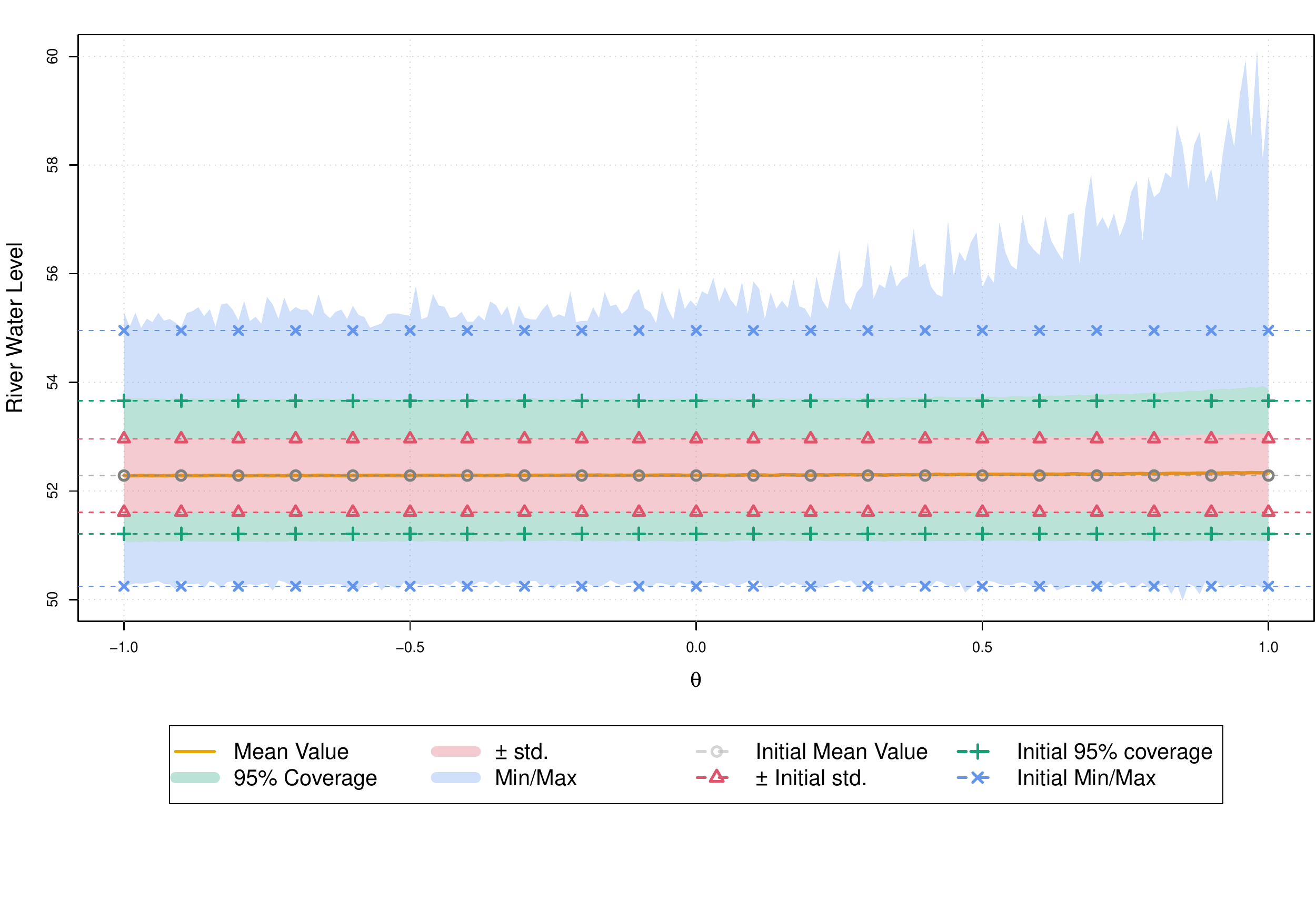}
    \caption{Expectation, standard deviation, 95\% coverage, minimum and maximum estimators of the river water level, w.r.t. the application domain dilatation intensity $\theta$.}
    \label{fig:statsY}
\end{figure}

Notice that the expectation, standard deviation, 95\% coverage quantiles, and minimum value of the model output remain stable under the distributional perturbations on the application domain of the Strickler coefficient. However, the estimated upper bound of the output support increases exponentially for positive values of $\theta$. Widening the uncertainty on the type of riverbed allows for relatively rare events of high river water levels since the $0.975$-quantile does not seem to be dramatically affected by the distributional perturbations.

Figure~\ref{fig:river_shapEffects} presents the Shapley effects \cite{Owen2014}, which are global SA importance measure for real-valued model outputs with dependent inputs. These indices have been computed using a double Monte Carlo scheme as depicted in \cite{Song2016}, with fixed simulated sample sizes, for each perturbed distribution $Q$ driven by a value of $\theta$, $N_v= 10^4$ for estimating $\mbox{Var}_Q(Y)$, as well as $N_o= 10^3$ and $N_i= 100$ to estimate $\mathbb{E}_Q\left[ \mbox{Var}_Q\left(Y \mid X_A\right) \right]$ for every subset $X_A, A \subseteq \{1,\dots,d\}$ of variables. Additionally, the reference Shapley effects have been computed under the initial distribution with sample sizes $N_v = 10^5$, $N_o = 3 \times 10^3$ and $N_i = 300$.

\begin{figure}[t!]
    \centering
    \includegraphics[width=\linewidth]{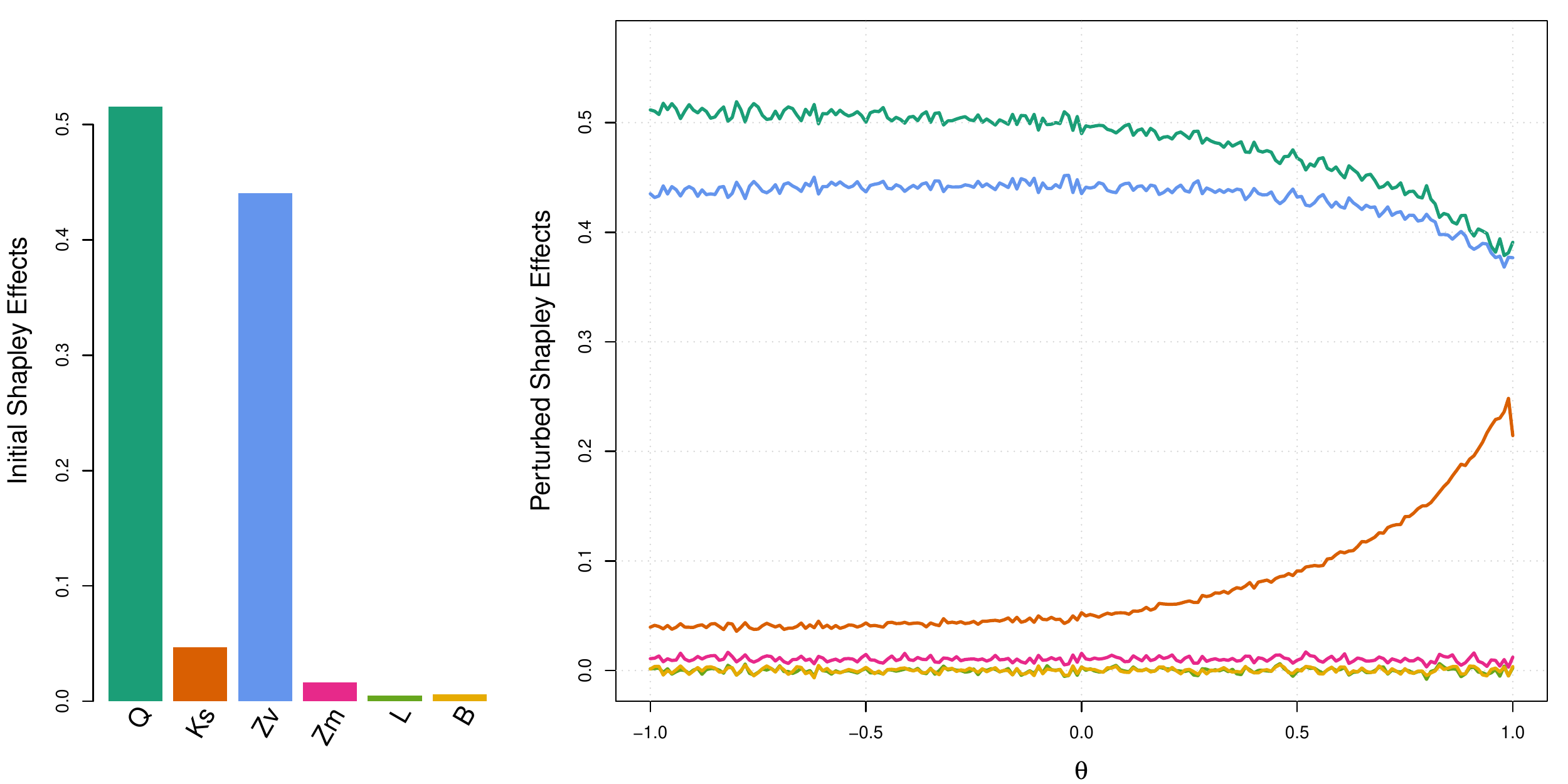}
    \caption{Reference Shapley effects (left) and Shapley effects of the river water level model under optimally dilated application domain w.r.t. $\theta$ (right), using the same color panel.}
    \label{fig:river_shapEffects}
\end{figure}

Note that the distributional perturbations have an impact on the importance measures. More precisely, increasing the range of the uncertainty of the riverbed roughness increases its importance for positive values of $\theta$. Conversely, the importance of both $Q$ and $Z_v$ decreases accordingly. However, the variable importance hierarchy induced by the Shapley effects is preserved. It is also essential to notice that both $Q$ and $Z_v$ tend to be considered equally important as $\theta$ gets large. Hence, this SA does not seem robust to distributional perturbations and, more precisely, to a widening of the support of the Strickler coefficient in combination with the quantile perturbations put on $Q$, $L$, and $Z_m$.

\subsection{Conclusions}

These two use cases illustrate the usefulness of our method in both UQ and ML studies. On the ML side, for classification tasks, it allows assessing the global behavior of black-box models under input perturbations. This assessment is quantified either through studying the prediction shifts due to the perturbation, or through the behavior of feature importance metrics. Locally, it allows the detection of low-stability regions of interest (regions where small perturbations induce a classification change). Overall, in addition to classical accuracy metrics, our method can be used to assess confidence in a predictive model. On the UQ side, it allows for studying the impact of distributional perturbations (whose intensity can be tuned to represent epistemic uncertainties) on the model output, even in situations where inputs are correlated. Furthermore, in a SA context, the behavior of classical sensitivity indices under those perturbations can also be studied, and their robustness (for instance, the preservation of the input importance hierarchy) w.r.t. the probabilistic modeling on the inputs can be assessed.

\section{Discussion and perspective}\label{sec:conclusion}
Obtaining a robustness diagnosis on the influence of input variables and the behavior of a model considered a black box is essential for its acceptance and use. Such models can either implement solutions of mechanistic equations or be learned from data. In both cases, the sensitivity of key indicators to a misspecification of the probabilistic input model must be evaluated. It is essential to have specific intelligible rules and well-defined computational tools to achieve this goal. This paper provides an answer to this question by proposing to modify the distributions of the input variables, seen as features. These perturbations modify the quantile of marginal distributions while preserving the dependence structure. We project the initial distribution under a Wasserstein cost to design optimal perturbations. The proposed approach allows the inclusion of regularity conditions through piece-wise isotonic polynomials. The robustness analyses conducted on real case studies illustrate its potential flexibility and speed, which are essential for scaling up (increasing dimension and size of data). These methods and examples have been implemented within a dedicated openly accessible GitLab repository.

We will continue this work by examining the following points, which offer specific avenues for further research. The first technical problem is determining rules for the degree of the nonnegative polynomials in smooth approximations. It can be understood as the injection of prior information on the order of differentiability of the sought-after perturbed gqf. If $F^{\leftarrow}_P$ is supposedly regular (on all or some parts of $[0,1]$) and its differentiability can be estimated, a rule-of-thumb heuristic could be to impose the same differentiability on the resulting perturbed gqf $F^{\leftarrow}_Q$. In an ML framework, nonparametric approaches to isotonic regression of the marginal gqfs of $P$ can provide answers through statistical testing \citep{Durot2001,DaVeiga2012} or criteria enforcing a trade-off between approximation error and sparsity (e.g., inspired from AIC or BIC).

The second problem to be addressed is providing rules for modifying the dependence structure between features. In an UQ context, classes of parametric high-dimensional (vine) copulas have been recently explored by \citep{TORRE20191,Benou2020,PLISCHKE20191046,Bai2021} for several tasks, including SA. In the light of the dedicated literature, association and concordance measures appear as the most interpretable tools for these multivariate distributions (and therefore frequently used to incorporate expert opinion) \citep{Clemen1999,Zondervan2017,Benou2020}. Relevant classes of parametric vine copulas could be defined objectively by the Fréchet-Hoeffding bounds on the conditional bivariate dependence structures of such copulas or by computing extreme dependence structures relevant for the phenomenon of interest, as proposed by \citep{Benou2020}. The latter authors use a sparsity hypothesis to avoid prohibitive computational costs related to the curse of dimensionality. It echoes the strong need for sparsity assumptions about feature dependence in an ML framework, to which preliminary dimensionality reduction steps usually respond. In this context, many ML methods pay particular attention to the search for decorrelated, if not independent, features or small groups of features. It ensures the good behavior of the learning models and the relevance of the indicators allowing their interpretability \citep{Gilpin2018ExplainingEA}. For instance, it is well known that permutation-based indicators for random forests provide relevant information on the importance of input features solely when they are independent \cite{gregorutti2017}. New solutions have recently been developed to overcome this limitation \cite{Benard_shaff_2022}. In addition to computational limitations, it seems difficult, if not impossible, to produce understandable indicators when dealing with a large number of features. 

An alternative approach to account for feature dependence could be based on multivariate quantile extensions. Among the many approaches to defining such a notion, the most theoretically accomplished today is the one resulting from the concept of \emph{center-outward distribution function}, based on optimal transportation ideas. It was first proposed and studied by \citep{Chernozhukov2017} and \citep{Hallin2020DistributionAQ}. The resulting \emph{center-outward quantile function} offers desirable continuity and invertibility properties, which ensures the existence of closed and nested quantile contours \citep{FIGALLI2018413}. Suggesting that characteristics of this multivariate quantile function can be used to define variation classes seems natural to generalize our approach to multidimensional perturbations instead of focusing on marginal perturbations. Therefore we suggest that studying the interpretability of these features and the computational work required to handle quantile contours can be useful to improve our approach. 

On the subject of isotonic polynomials, one can notice that the solution of the proposed approximation scheme can not be differentiable at the points of the grid. To circumvent this drawback and enforce the derivability of the smoothed gqf on $[0,1]$, one could enforce additional constraints on the derivate of the polynomials. It would result in smoother perturbed gqf. Doing so would bear resemblance with splines \cite{Hastie2009}, and in particular isotonic splines, which have been extensively studied in the literature \cite{Schmidt1988, Fredenhagen1999, Wang2008}.

As stated in the proof of Theorem~\ref{thm:opti_pol_CCQP}, we chose to use the SOS representation of nonnegative polynomials and their subsequent characterization using semi-definite positive matrices. Works in \cite{Dette1997, Murray2016} could allow for less convoluted representations, leading to faster solving times. However, our developments allow for more apparent extensions to the definition of multivariate nonnegative polynomials. This decision has been made to account for using multivariate polynomials for smooth perturbation classes of quantile contours, which generalizes our approach to multidimensional perturbations.

Other spaces of functions can also be used for smoothing purposes. Following the work of \cite{Bagnell2015LearningPF}, abstract reproducing kernel Hilbert space of nonnegative functions can be reached through particular kernels. Hence, it would allow accessing different sets of nonnegative functions, whose regularities can be assessed through a thorough study of these kernels.

Finally, one of the primary motivations for using the $2$-Wasserstein distance as a projection metric is that it metricizes weak convergence on a broad set of probability measures. Other distances between probability measures are endowed with similar properties, such as the Prokhorov-Levy distance. An exciting improvement of our method would be to leverage the different relationships between such distances (see \cite{Gibbs2002}) to assess their use's relevancy for robustness studies.

\section*{Acknowledgements}

Support from the ANR-3IA Artificial and Natural Intelligence Toulouse Institute is gratefully acknowledged. 

The authors warmly thank Jean-Bernard Lasserre (Institut de Math\'ematique de Toulouse) and Guillaume Dalle (CERMICS) for their help in solving the optimization problem at the heart of this work, as well as Cl\'ement B\'enesse (Institut de Math\'ematiques de Toulouse) and Antoine Paolini (UVSQ Universit\'e Paris Saclay) for their support on some mathematical aspects of this study.

\bibliographystyle{plain}
\bibliography{bib/references}

\appendix
\section{Proofs}\label{sec:proof}


\begin{proof}[Proof of Remark~\ref{prop:uniqueQuantFunc}]
First, recall the following result from \citep{Dufour1995}:
\begin{thm}
If $G$ is a real-valued non-decreasing, left-continuous function with domain $(0,1)$, then there is a unique distribution function $F$ such that $G = F^{\leftarrow}$.
\end{thm}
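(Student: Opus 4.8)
The plan is to construct $F$ explicitly as a Galois-type inverse of $G$, verify that it lies in $\mathcal{F}$, check the identity $F^{\leftarrow}=G$, and then obtain uniqueness from a standard duality argument. Concretely, I would set
$$F(t):=\sup\{u\in(0,1)\mid G(u)\le t\},\qquad t\in\mathbb{R},$$
with the convention $\sup\emptyset=0$. Equivalently, $F$ is the cdf of the random variable $G(U)$ with $U$ uniform on $(0,1)$, which is well defined precisely because $G$ is real-valued on $(0,1)$. Since $G$ is non-decreasing, the set $\{u:G(u)\le t\}$ is an initial segment of $(0,1)$, so $F(t)$ is just its length; this immediately gives that $F$ is non-decreasing and $[0,1]$-valued.

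Next I would check $F\in\mathcal{F}$. The limits are elementary: for $a\in(0,1)$ the number $G(a)$ is finite, so $F(t)\ge a$ whenever $t\ge G(a)$, yielding $\lim_{t\to\infty}F(t)=1$; and for $t<G(a')$ with $0<a'<a$ one has $\{u:G(u)\le t\}\subseteq(0,a')$ by monotonicity, so $F(t)\le a'$, yielding $\lim_{t\to-\infty}F(t)=0$. Right-continuity is the first place the left-continuity of $G$ is genuinely needed: if $F(t^{+})>F(t)$, pick $a$ strictly between them; then $a<F(s)$ for all $s>t$ forces, for each such $s$, some $u>a$ with $G(u)\le s$, hence $G(a)\le s$; letting $s\downarrow t$ gives $G(a)\le t$, hence $a\le F(t)$, a contradiction.

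The heart of the argument is the equivalence
$$F(t)\ge a\iff G(a)\le t\qquad(a\in(0,1),\ t\in\mathbb{R}).$$
The direction $(\Leftarrow)$ is pure monotonicity: $G(u)\le G(a)\le t$ for every $u<a$, so $(0,a)\subseteq\{u:G(u)\le t\}$ and thus $F(t)\ge a$. For $(\Rightarrow)$ I would split according to whether the supremum defining $F(t)$ is attained at some $u>a$ — in which case $G(a)\le G(u)\le t$ — or is only approached by a sequence $u_n\uparrow a$ inside the set, in which case $G(a)=\lim_n G(u_n)\le t$ by left-continuity of $G$. From this equivalence, $F^{\leftarrow}(a)=\inf\{t\mid F(t)\ge a\}=\inf\{t\mid t\ge G(a)\}=G(a)$ for every $a\in(0,1)$, which is exactly the asserted identity.

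For uniqueness, suppose $F_1,F_2\in\mathcal{F}$ both satisfy $F_i^{\leftarrow}=G$. Right-continuity of each $F_i$ gives the standard fact $\{t\mid F_i(t)\ge a\}=[F_i^{\leftarrow}(a),\infty)=[G(a),\infty)$, so $F_1(t)\ge a\iff F_2(t)\ge a$ for all $a\in(0,1)$ and all $t\in\mathbb{R}$; since $F_i(t)=\sup\{a\in(0,1)\mid a\le F_i(t)\}$, this forces $F_1=F_2$. The main obstacle is not conceptual but bookkeeping: invoking the left-continuity of $G$ at exactly the two places it is actually required (right-continuity of $F$ and the nontrivial direction of the Galois equivalence), and disposing cleanly of the boundary behaviour as $a\to0^{+},1^{-}$ and $t\to\pm\infty$; once $F$ is defined correctly, no deeper input is needed.
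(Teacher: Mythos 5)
Your proof is correct and self-contained, but note that the paper does not actually prove this statement: it is quoted verbatim from Dufour (1995), who is cited for it, and the paper's surrounding text only combines that citation with the bijection between distribution functions and Lebesgue--Stieltjes measures from Kallenberg. So there is no in-paper argument to compare against; your Galois-adjunction construction $F(t)=\sup\{u\in(0,1)\mid G(u)\le t\}$, the verification that $F\in\mathcal{F}$, the equivalence $F(t)\ge a\iff G(a)\le t$, and the right-continuity-based uniqueness are exactly the standard route to this result and are all carried out correctly. One small inaccuracy in the commentary, though not in the mathematics: you assert that left-continuity of $G$ is ``genuinely needed'' for right-continuity of $F$, but the argument you actually give for right-continuity uses only monotonicity of $G$ (the step $G(a)\le G(u)\le s$ relies on $u>a$, not on left limits), and indeed $F$ as you define it is right-continuous for \emph{any} non-decreasing real-valued $G$ on $(0,1)$. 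Left-continuity is used in exactly one place, the $(\Rightarrow)$ direction of the equivalence, where the supremum may be approached from below by $u_n\uparrow a$; that is also precisely where the conclusion $F^{\leftarrow}=G$ would fail without it (take $G$ with a right-continuous jump to see $F^{\leftarrow}$ return the left limit rather than $G$ itself).
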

Noticing that distribution functions (i.e., functions in $\mathcal{F}$) uniquely induce probability measures as their Lebegue-Stieltjes measures (see, \cite{Kallenberg2021}, Thm 2.14, p42) ultimately proves the proposition.
\end{proof}


\begin{proof}[Proof of Lemma~\ref{propo:nonemptyQ}]
Notice that if (\ref{eq:condNonempty}) is respected, then the constraints are non-decreasing. Then, there exists at least a function $F^{\leftarrow}$ in $\mathcal{F}^{\leftarrow}$ such that the constraints are respected (e.g., the linear interpolant of the constraints). So that, using Proposition~\ref{prop:uniqueQuantFunc}, there exists a probability measure with $F^{\leftarrow}$ as a generalized quantile function.
\end{proof}

\begin{proof}[Proof of Lemma~\ref{lemma:quantile.shift.lemma}]
Since  $[\eta_0, \eta_1]$ is bounded, one can define a standardized intensity parameter $\theta \in \Theta=[-1, 1]$ as:
\begin{align*}
    \theta(b) &= \frac{p_{\alpha}-b}{p_{\alpha} - \eta_1} \mathds{1}_{\{ b > p_{\alpha}\}}(b)  +\frac{b-p_{\alpha} }{p_{\alpha}- \eta_0}\mathds{1}_{\{b < p_{\alpha}\}}(b).
\end{align*}
This intensity level can be interpreted as follows: 
\begin{itemize}
    \item $-1 \leq \theta < 0 \Leftrightarrow b< p_{\alpha}$: the (perturbed) distribution $Q$ such that $F^{\leftarrow}_{Q}(\alpha) =b$ is constrained to have a lower $\alpha$-quantile than $P$, down to $\eta_0$;
    \item $\theta=0 \Leftrightarrow b=p_{\alpha}$: $Q$ and $P$ share the same $\alpha$-quantile;
    \item $0<\theta \leq 1 \Leftrightarrow b>p_{\alpha}$: $Q$ is constrained to have a higher $\alpha$-quantile than $P$, up to $\eta_1$.
\end{itemize}
Equivalently, one can express $b$ in terms of $\theta$, which directly provides the expression of $b_{\alpha} (\bm\eta, \theta)$.
\end{proof}

\begin{proof}[Proof of Lemma~\ref{lemma:dilatation.lemma}]
Preserving the midpoint of $\Omega_X$ while perturbing its width requires  that, for any couple $(b_0, b_1) \in \mathbb{R}^2$, that
$$\begin{cases}
{\displaystyle\frac{b_0+b_1}{2} =  \frac{\omega_0 + \omega_1}{2}} \\
{\displaystyle b_1-b_0 = \kappa (\omega_1 - \omega_0)}
\end{cases} \iff \begin{cases}
{\displaystyle b_1 = \frac{\omega_1(\kappa +1) - \omega_0(\kappa -1)}{2}} \\
{\displaystyle b_0 = \frac{\omega_0(\kappa +1) - \omega_1(\kappa-1)}{2}}
\end{cases}$$
where $\kappa \in [\frac{1}{\eta}, \eta]$. Using the transformation
\[ \theta(\kappa) = \left\{\begin{array}{lcr}
{\displaystyle - \frac{\kappa-1}{\frac{1}{\eta}-1}}   &  \text{if } & {\displaystyle  \frac{1}{\eta} \leq \kappa < 1} \\
{\displaystyle 0 }  & \text{if } &\kappa=1 \\
{\displaystyle \frac{\kappa-1}{\eta-1}}   &   \text{if } & 1 < \kappa < \eta
\end{array}\right. \] 
allows to define the formulas for $b_0$ and $b_1$ provided in the lemma statement.
The perturbation intensity $\theta$ can be interpreted as follows:
\begin{itemize}
    \item If $-1\leq \theta<0$, the application domain is narrowed, its width being divided by up to $\eta$;
    \item If $\theta =0$, the application domain boundaries are not perturbed;
    \item If $0< \theta \leq1$, the application domain is widened, its width being multiplied by up to $\eta$.
\end{itemize}
\end{proof}


\begin{proof}[Proof of Remark \ref{prop:copula-results}]
{\bf (i}) Consider the empirical situation, typically encountered in ML, where the marginal distributions of the inputs, respectively $P_1,\ldots,P_d$, are purely atomic. For $j\in\{1,\ldots,d\}$, denote $\{x_{j,i}\}_{1\leq i \leq n}$ the $j$th marginal sample of observations. From \citep{Nelsen2006} the dependence structure of the joint measure $P$ is known by its empirical copula $\hat{C}_P:[0,1]^d \to [0,1]$ defined as
\begin{eqnarray}
    \hat{C}_P(u_1, \dots, u_d) = {\displaystyle \frac{1}{n} \sum_{i=1}^n \prod_{j=1}^d \mathds{1}_{\left\{ \frac{R_{j,i}}{n} \leq u_j \right\}}(u_j)},
    \label{eq:empCop}
\end{eqnarray}
where $R_{j,k}$ denotes the rank of $x_{j,k}$ in $\{x_{j,i}\}_{1\leq i \leq n}$. 

Perturbing only the marginals of $P$ when solving the optimal projection problem (\ref{eq:pert_prob}) amounts apply to each marginal sample $\{x_{j,i}\}_{1\leq i \leq n}$ the transportation maps
\begin{eqnarray}
T_j = (F^{\leftarrow}_{Q_j} \circ F_{P_j}), \quad j=1,\dots,d. \label{marginal.perturbation.map}
\end{eqnarray}
and consequently the marginal empirical probability measure of the perturbed samples are defined as
$$\widetilde{Q}_j = \frac{1}{n} \sum_{i=1}^n \delta_{\tilde{x}_{j,i}}$$
where $\tilde{x}_{j,i} = T_j(x_{j,i})$ are the perturbed datapoints. Since the marginal cdf and gqf in (\ref{marginal.perturbation.map}) are non-decreasing (ie., monotonic) functions, their composition is non-decreasing as well. Since  monotonic functions preserve orders, the ranks of $x_{j,i}$ and  $\tilde{x}_{j,i}$ are invariant, for $(i,j)\in\{1,\ldots,n\}\times\{1,\ldots,d\}$. From (\ref{eq:empCop}), it implies that the empirical copula of $Q$ is $\hat{C}_Q=\hat{C}_P$. This traduces, for instance, the invariance of  Spearman correlation matrices between the initial and perturbed datasets. 

The transportation map $T_j$ defined by (\ref{eq:pertMap}) is not the unique non-decreasing transportation map between $P$ and $Q$ (\citep{Santambrogio2015}, Chap. 2). However, $T_j$ is indeed an optimal perturbation plan (in Monge's sense) for a wide variety of transportation costs between the marginals $P_j$ and $\widetilde{Q}_j$ (\citep{Peyre2019}, Remark 2.30). Moreover notice that for any $j=1,\dots,d$
$$\widetilde{Q}_j \underset{n \rightarrow \infty}{\longrightarrow} Q_j$$
since the distribution of $F_{P_j}(X_j)$ converges toward a uniform distribution on $[0,1]$.

\noindent {\bf (ii})  Now, for the sake of conciseness, let $P$ denote any univariate probability measure and $Q$ its optimally perturbed counterpart. If $F^{\leftarrow}_Q$ is strictly increasing then from \cite{Dufour1995}, for all $u \in [0,1]$
$$(F_Q \circ F^{\leftarrow}_Q) (u) = u$$
Now denote the transportation map 
\begin{eqnarray*}
    T: \mathcal{X} & \to & \mathcal{X} \nonumber \\
    x & \mapsto & (F^{\leftarrow}_{Q} \circ F_{P})(x)
    \label{eq:pertMap2}
\end{eqnarray*}
Let $X\sim P$ and define $U_P=F_P(X)$. Then, one has, if $P$ is atomless, that $T(X)\sim Q$ and
$$U_Q = F_Q(T(X)) = (F_Q \circ F^{\leftarrow}_Q \circ F_P)(X) = F_P(X) = U_P \text{ a.s.}$$

Now, for a multivariate probability measure $P$ with marginals $P_1, \dots, P_d$, and respective optimally perturbed probability measures $Q_1, \dots, Q_d$, this entails that, for $i=1,\dots, d$:
$$U_{Q_i} = F_{Q_i}(T(X_i)) = F_{P_i}(X_i) = U_{P_i} \text{ a.s.}$$
and hence, the random vectors $U_Q$ and $U_P$ are equal almost surely. Subsequently, for any $u\in [0,1]^d$, $C_P(u) = C_Q(u)$. Hence the $X$ and $T(X)$ have the same copula.
\end{proof}


\begin{proof}[Proof of Proposition~\ref{propo:wass_l2}]
First, one can notice that, for any probability measure $G \in \mathcal{P}(\mathbb{R})$:
$$F^{\leftarrow}_G = F^{\rightarrow}_G \quad \mu-\text{almost everywhere (a.e.)} $$
where $\mu$ denotes the Lebesgue measure on $[0,1]$ (see, \cite{Fortelle2015}). This entails that:
\begin{align*}
    \int_0^1 \left(F^{\leftarrow}_G - F^{\rightarrow}_P\right)^2 d\mu &= \int_0^1\left(F^{\rightarrow}_G  - F^{\rightarrow}_P\right)^2 d\mu\\
    &= W^{2}_2(P,G)
\end{align*}
by a continuous composition of $\mu$-a.e. equal functions, and by identification with the definition of the $W_2$ distance for probability measures supported on $\mathbb{R}$.

In the following of this proof, one refers to the projection problem  (\ref{eq:init_pb}) as the ``Wasserstein projection'', and the projection problem (\ref{eq:equiv_pb}) as the ``$L^2$ projection''.

The proposition can be proven in two steps. First, if $Q$ is the solution of the Wasserstein projection, then its quantile function $F^{\leftarrow}_Q$ is necessarily the solution of the $L^2$ projection. This is due to the fact that $F^{\leftarrow}_Q$ is uniquely characterized by $Q$, as well as the particular case of the $2$-Wasserstein distance for measures supported on $\mathbb{R}$ (see  Definition~\ref{def:wass_1D}).

Second, let $Q$ be any probability measure in $\mathcal{P}_2(\mathbb{R})$, and denote $F^{\leftarrow}_Q$ its characterizing quantile function. Assume that $F^{\leftarrow}_Q$ is the solution of the $L^2$ projection. Since $F^{\leftarrow}_Q$ characterizes uniquely $Q$, thanks to Proposition~\ref{prop:uniqueQuantFunc}, one has that $Q$ is necessarily the solution of the Wasserstein projection.
\end{proof}


\begin{proof}[Proof of Proposition~\ref{prop:analytRes}]
First, note that the intervals $A_i, i=1,\dots,K$ are disjoint. Moreover for any $i=1,\dots,K-1$, consider the four cases:
\begin{enumerate}
    \item If $\alpha_i < \beta_i< \alpha_{i+1}$ and, then $A_i = (\alpha_i, \beta_i]$;
    \item If $\beta_i < \alpha_i<\beta_{i+1}$ and, then $A_i = (\beta_i, \alpha_i]$;
    \item If $\alpha_i < \beta_i$ and assume that $\alpha_{i+j} < \beta_{i+j-1}$ for $j=1,\dots, m$ where $m \leq K-i$ is some non-negative integer, then $A_i = (\alpha_i, \alpha_{i+1}]$, additionally for $j=i+1, \dots,i+m-1$, $A_j=(\alpha_j, \alpha_{j+1}]$ and finally $A_{i+m} = (\alpha_{i+m}, \beta_{i+m}]$;
    \item If $\beta_i < \alpha_i$ and assume that $\alpha_{i+j} < \beta_{i+j+1}$ for $j=1,\dots, m$ where $m \leq K-i-1$ is some non-negative integer, then $A_i = (\beta_i, \alpha_{i}]$ and for $j=i+1, \dots, i+m$, $A_j=(\alpha_{j-1}, \alpha_{j}]$.
\end{enumerate}
The integral can be decomposed as follows:
$$\int_0^1 \left(L(x) - F^{\rightarrow}_P(x) \right)^2 dx = \int_{\overline{A}} \left(L(x) - F^{\rightarrow}_P(x) \right)^2 dx + \sum_{i=1}^K \int_{A_i} \left(L(x) - F^{\rightarrow}_P(x) \right)^2 dx$$
where
$$\int_{\overline{A}} \left(L(x) - F^{\rightarrow}_P(x) \right)^2 dx \geq 0.$$
Since the quantile constraints are of the form:
$$L(\alpha_i) \leq b_i \leq L\left(\alpha_{i}^+\right).$$
we can always write $L(y)=b_i + h(y)$ for $y \in A_i$, and where $h$ is an non-decreasing, left-continuous function. Moreover, note that:
\begin{itemize}
    \item $h(y)$ is non-negative, and $F^{\rightarrow}_P(y) - b_i \leq 0$ if $A_i$ falls in cases 2. and 4.
    \item $h(y)$ is non-positive, and $F^{\rightarrow}_P(y) - b_i \geq 0$ if $A_i$ falls in cases 1. and 3.
\end{itemize}
Then we have:
\begin{align*}
    \int_{A_i} \left(L(x) - F^{\rightarrow}_P(x) \right)^2 dx &= \int_{A_i} \left(L(x) - b_i - h(y) \right)^2 dx \\
    &= \int_{A_i} \left(F^{\rightarrow}_P(x) - b_i\right)^2 dx + \int_{A_i} h(x)^2 dx \\
    & \ \ \ \ -2\int_{A_i} h(x)\left(F^{\rightarrow}_P(x) - b_i\right)dx \\
    &\geq \int_{A_i} \left(F^{\rightarrow}_P(x) - b_i\right)^2 dx
\end{align*}
since $h(x)$ and $F^{\rightarrow}_P(x) - b_i$ have different sign. Due to the constraint and the left-continuous non-decreasing nature of $L$, this bound is tight and is attained if and only if $h(y)=0$ for all $y \in A_i$.
Globally, this entails that
$$\int_0^1 \left(L(x) - F^{\rightarrow}_P(x) \right)^2 dx \geq \sum_{i=1}^K \int_{A_i} \left(F^{\rightarrow}_P(x) - b_i\right)^2 dx$$
and this tight bound is uniquely attained by the left-continuous non-decreasing function defined as
\begin{align*}
    F^{\leftarrow}_Q(y) = \begin{cases}
    F^{\rightarrow}_P(y) & \text{if } y \in \overline{A} \\
    b_i & \text{if } y \in A_i, \quad i=1,\dots, K.
    \end{cases}
\end{align*}
\end{proof}

\paragraph{Proof of Theorem~\ref{thm:opti_pol_CCQP} (ingredients)}\label{proof:opti_pol_CCQP-partI}
This proof relies on the following results that can be found in \cite{Parrilo2010, Parrilo2012, Lasserre2015}, and further recalled in \cite{Sobrie2018}. They involve sum-of-squares (SOS) polynomials, which can be defined as follows.
\begin{dfi}[SOS polynomials]
A polynomial $S$ of even degree $p$ is said to be a SOS polynomial if, for $m \in \mathbb{N}^*$, there exists $s_1, \dots, s_m$ polynomials of degree at most equal to $\frac{d}{2}$, and such that, $\forall x \in \mathbb{R}$:
$$S(x) = \sum_{i=1}^m \bigl(s_i(x)\bigr)^2.$$
\label{def:sos_poly}
\end{dfi}

\begin{thm}
Let $t_0, t_1 \in \mathbb{R}$ such that $t_0 < t_1$, and let $p \in \mathbb{N}^*$. 
\begin{description}
\item[(i)] A univariate polynomial $S$ of even degree $d=2p$ is non-negative on $[t_0, t_1]$ if and only if it can be written as, $\forall x \in [t_0, t_1]$
$$S(x) = Z(x) + (x-t_0)(t_1 -x) W(x) $$
where $Z$ is a SOS polynomial of degree at most equal to $d$, and $W$ is an SOS polynomial of degree at most equal to $d-2$.
\item[(i)] An univariate polynomial $S$ of odd degree $d=2p+1$ is non-negative on $[t_0, t_1]$ if and only if it can be written as, $\forall x \in [t_0, t_1]$
$$S(x) = (x-t_0)Z(x) + (t_1 -x)W(x)$$
where $Z, W$ are SOS polynomials of degree at most equal to $d$.
\end{description}
\label{thm:posPoly_interv}
\end{thm}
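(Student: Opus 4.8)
The plan is to read Theorem~\ref{thm:posPoly_interv} as the interval (Markov--Luk\'acs) representation of univariate nonnegative polynomials and to prove it by real factorisation; the \emph{if} directions are trivial and the \emph{only if} directions carry the content. For the \emph{if} parts, on $[t_0,t_1]$ one has $(x-t_0)\geq0$, $(t_1-x)\geq0$, $(x-t_0)(t_1-x)\geq0$, and every SOS polynomial is nonnegative everywhere; hence both $Z+(x-t_0)(t_1-x)W$ and $(x-t_0)Z+(t_1-x)W$ are sums of nonnegative terms on $[t_0,t_1]$, so $S\geq0$ there.

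For the \emph{only if} directions I would first reduce to the reference interval $[-1,1]$: the invertible affine substitution $x\mapsto\frac{t_1-t_0}{2}x+\frac{t_0+t_1}{2}$ preserves degrees, sends SOS polynomials to SOS polynomials, and turns $(x-t_0)$ and $(t_1-x)$ into positive multiples of $(1+x)$ and $(1-x)$, so it suffices to treat $S\geq0$ on $[-1,1]$. Next I would factor $S$ over $\mathbb{R}$ as a constant times linear factors $(x-a)$ (real roots) and quadratic factors $(x-b)^2+c^2$ with $c\neq0$ (conjugate pairs). Each quadratic factor is a sum of two squares of affine polynomials. A real root $a\in(-1,1)$ must have even multiplicity, for otherwise $S$ would change sign inside $[-1,1]$, so that factor is a perfect square. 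A real root at an endpoint contributes a power of $(1\mp x)$. A real root $a$ with $|a|>1$ gives an affine factor of constant sign on $[-1,1]$, which can be written as $\pm\bigl(\lambda(1+x)+\mu(1-x)\bigr)$ with $\lambda,\mu\geq0$ (explicitly $\lambda=\tfrac{1-a}{2}$, $\mu=\tfrac{-1-a}{2}$ when $a<-1$, and symmetrically when $a>1$). Matching the sign of the leading coefficient against the product of these constant-sign affine factors then shows that, after pulling out the perfect squares, $S$ is a nonnegative combination of the atoms $1$, $(1+x)$, $(1-x)$, $(1-x^2)$, each multiplied by a product of squares.

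It then remains to recombine this into the canonical form. Since a product of sums of squares is again a sum of squares (immediate by expansion; the two-square identity keeps the count small if desired), each atom carries an SOS coefficient. In the even-degree case $d=2p$, a parity/degree count forces the $(1+x)$ and $(1-x)$ contributions to pair off into $(1-x^2)$, giving $S=Z+(1-x^2)W$ with $Z,W$ SOS; comparing degrees yields $\deg Z\leq d$ and $\deg W\leq d-2$. In the odd-degree case $d=2p+1$, the same count leaves exactly one of $(1+x)$, $(1-x)$ unpaired and yields $S=(1+x)Z+(1-x)W$ with $Z,W$ SOS of degree $\leq d-1$. Equivalently, one can carry out the recombination by strong induction on $\deg S$, peeling off one irreducible factor (or a conjugate pair) at a time and using that $(1\pm x)\cdot\mathrm{SOS}$ and $(1-x^2)\cdot\mathrm{SOS}$ are closed under products; undoing the affine substitution gives the statement back on $[t_0,t_1]$.

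The main obstacle is precisely this last recombination step: arranging the linear, quadratic and endpoint factors into \emph{exactly} the two advertised shapes with the \emph{stated} degree bounds on the SOS pieces, while keeping the sign and parity bookkeeping straight (multiplicities of endpoint roots, sign of the leading coefficient, and a possibly odd number of constant-sign affine factors). The easy directions, the affine reduction, and the basic facts about real factorisation are routine. Finally I would note why this SOS characterisation is the useful one for Theorem~\ref{thm:opti_pol_CCQP}: ``$Z$ SOS of degree $\leq 2p$'' is equivalent to $Z(x)=v_p(x)^\top G\, v_p(x)$ for some symmetric $G\succeq0$ with $v_p(x)=(1,x,\dots,x^p)^\top$, so substituting these representations turns the nonnegativity constraint on $L'$ into a system of linear equations together with a positive-semidefiniteness constraint, i.e.\ the spectrahedral (hence closed convex) set $\mathcal{K}$.
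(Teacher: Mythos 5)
Your proposal is correct, but it takes a different route from the paper for the simple reason that the paper does not prove this statement at all: Theorem~\ref{thm:posPoly_interv} is the classical Markov--Luk\'acs representation of polynomials nonnegative on an interval, and the authors quote it as a known ingredient (citing Parrilo, Lasserre, and Sobrie et al.) before using it inside the proof of Theorem~\ref{thm:opti_pol_CCQP}. Your blind attempt supplies an elementary, self-contained proof via affine reduction to $[-1,1]$ and real factorisation, and the step you flag as the main obstacle does go through as you sketch it: after writing $S$ as a positive constant times squares, irreducible quadratics (each a sum of two squares), and constant-sign affine factors $\lambda_j(1+x)+\mu_j(1-x)$ with $\lambda_j,\mu_j\geq 0$ (the sign bookkeeping being exactly the evaluation-at-an-interior-point argument you indicate), expand the product of the $k$ affine factors; every term is a nonnegative multiple of $(1+x)^a(1-x)^b$ with $a+b=k\equiv d\ (\mathrm{mod}\ 2)$, so for even $d$ the terms with $a,b$ both even are squares (collected into $Z$, degree $\leq d$) and those with $a,b$ both odd are $(1-x^2)$ times squares (collected into $W$, degree $\leq d-2$), while for odd $d$ exactly one exponent is odd per term, giving $(x-t_0)Z+(t_1-x)W$ with $Z,W$ SOS of degree $\leq d-1\leq d$. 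What your route buys is self-containedness, and indeed the classical Luk\'acs theorem even yields single squares with tight degree bounds; what the paper's citation-based treatment buys is brevity and a formulation (SOS certified by semidefinite Gram matrices, Theorem~\ref{thm:SOS_sdpRep}) that the authors explicitly prefer because it extends to multivariate polynomials, which a root-factorisation argument cannot. Your closing remark correctly identifies why this SOS/SDP form is the one needed downstream to make the feasible set $\mathcal{K}$ in Theorem~\ref{thm:opti_pol_CCQP} closed and convex.
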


It is important to note that Theorem~\ref{thm:posPoly_interv} is quite general, in the sense that it allows for extensions to multivariate polynomials (i.e., polynomials taking values from $\mathbb{R}^d$). As pointed out in \cite{Dette1997} (Thm. 1.4.2), nonnegative polynomials on compact intervals can also be defined as a linear combination of squared polynomials. It may facilitate the identification of the nonnegative polynomials' coefficients, as done in \cite{Murray2016} in the context of statistical learning. However, for the sake of potential future genericity, we chose to leverage the direct powerful link between SOS polynomials and semi definite positive matrices, as expressed in the following theorem.

\begin{thm}
Let $S$ be an univariate polynomial of even degree $d = 2p$, with coefficients $s=(s_0, \dots, s_{d})$, and denote $x_p$ the usual monomial basis of polynomials of degree at most equal to $p$, i.e., $x_p=(1, x, x^2, \dots, x^{p-1}, x^p)^\top$. $S$ is an SOS polynomial if and only if there exists a $(p \times p)$ symmetric semi definite positive (SDP) matrix
$$\Gamma = \begin{bmatrix}\Gamma_{ij} \end{bmatrix}_{i,j=1, \dots, p} $$
that satisfies, $\forall x \in \mathbb{R}$,
$$S(x) = x_p^\top \Gamma x_p.$$
Moreover, for $k=0, \dots, d$, let $\mathbb{I}_k^p$ be the $(p \times p)$ matrix defined by, for $i,j=1,\dots,p$:
$$\begin{bmatrix} \mathbb{I}_k^p \end{bmatrix}_{i,j} = \mathds{1}_{\{i+j = k+2 \}}(i,j). $$
Then one additionally has that, for $i=0,\dots, d$
\begin{equation}
    s_i = \langle \mathbb{I}_i^p, \Gamma \rangle_F = \sum_{j+k = i+2} \Gamma_{j,k}
    \label{eq:coefToSDP}
\end{equation}
where, $\langle .,. \rangle_F$ denotes the Frobenius norm on matrices.
\label{thm:SOS_sdpRep}
\end{thm}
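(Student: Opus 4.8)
The plan is to establish the equivalence in both directions together with the coefficient identity (\ref{eq:coefToSDP}), the whole argument resting on two elementary ingredients: the bilinear expansion of $x_p^\top \Gamma x_p$ in the monomial basis, and the spectral factorization of a real symmetric positive semi-definite matrix. Throughout I write $x_p=(1,x,\dots,x^p)^\top$, so its $j$-th coordinate is $x^{j-1}$, and $\Gamma$ denotes a symmetric matrix whose size conforms to $x_p$. The first step is to record what I will call the \emph{coefficient bridge}. Expanding
\[
x_p^\top \Gamma x_p \;=\; \sum_{j,k}\Gamma_{j,k}\,x^{j-1}x^{k-1}\;=\;\sum_{j,k}\Gamma_{j,k}\,x^{j+k-2}
\]
and collecting powers of $x$, the coefficient of $x^i$ equals $\sum_{j+k=i+2}\Gamma_{j,k}$, which by the very definition of the selector matrices $\mathbb{I}_i^p$ and of the Frobenius inner product is precisely $\langle \mathbb{I}_i^p,\Gamma\rangle_F$. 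Since $\{1,x,\dots,x^{2p}\}$ is a basis of $\mathbb{R}[x]_{\leq 2p}$, two polynomials of degree at most $2p$ coincide as functions on $\mathbb{R}$ if and only if all their coefficients coincide; hence the functional identity $S(x)=x_p^\top\Gamma x_p$ for every $x\in\mathbb{R}$ is \emph{equivalent} to the linear system $s_i=\langle\mathbb{I}_i^p,\Gamma\rangle_F$, $i=0,\dots,2p$. This observation delivers (\ref{eq:coefToSDP}) automatically as soon as a suitable $\Gamma$ is shown to exist, so it remains only to prove the existence equivalence.

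\textbf{From SOS to a PSD Gram matrix.} Suppose $S=\sum_{i=1}^m s_i^2$ with $\deg s_i\le p$, as in Definition~\ref{def:sos_poly}. Writing $s_i(x)=v_i^\top x_p$ for the (zero-padded, if needed) coefficient vector $v_i$, one gets
\[
S(x)=\sum_{i=1}^m \bigl(v_i^\top x_p\bigr)^2=\sum_{i=1}^m x_p^\top v_i v_i^\top x_p = x_p^\top\Bigl(\sum_{i=1}^m v_i v_i^\top\Bigr)x_p .
\]
The matrix $\Gamma:=\sum_{i=1}^m v_i v_i^\top$ is symmetric, and $z^\top\Gamma z=\sum_i (v_i^\top z)^2\ge 0$ for every $z$, so $\Gamma$ is PSD; the coefficient bridge then yields (\ref{eq:coefToSDP}).

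\textbf{From a PSD Gram matrix to SOS.} Conversely, assume $S(x)=x_p^\top\Gamma x_p$ for a symmetric PSD matrix $\Gamma$. By the spectral theorem, $\Gamma=\sum_{k}\lambda_k w_k w_k^\top$ with an orthonormal family $(w_k)$ and eigenvalues $\lambda_k\ge 0$ (non-negativity of the spectrum being exactly the PSD condition). Discarding the null eigenvalues and keeping the $r$ indices with $\lambda_k>0$,
\[
S(x)=\sum_{k:\lambda_k>0}\lambda_k\bigl(w_k^\top x_p\bigr)^2=\sum_{k:\lambda_k>0}\Bigl(\sqrt{\lambda_k}\,w_k^\top x_p\Bigr)^2 ,
\]
a sum of $r\le p+1$ squares of polynomials of degree at most $p$, hence $S$ is SOS in the sense of Definition~\ref{def:sos_poly}. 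This closes the equivalence, and combined with the coefficient bridge it proves the theorem.

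\textbf{Where the work is.} The argument is mostly bookkeeping: the single non-elementary tool is the spectral factorization of a real symmetric PSD matrix (equivalently, a Cholesky-type decomposition $\Gamma=B^\top B$), which I would invoke as a standard fact. The step that needs genuine care — and the one I would write out most carefully — is the coefficient bridge, i.e. the passage from the \emph{functional} identity $S=x_p^\top\Gamma x_p$ to the \emph{coefficientwise} identity: this is where the linear independence of the monomials is used, and where the anti-diagonal pattern of $\mathbb{I}_i^p$ must be matched exactly against the degree count $x^{j-1}x^{k-1}=x^{j+k-2}$. Everything else is immediate, and in particular no appeal to Theorem~\ref{thm:posPoly_interv} is needed here, the positivity-on-an-interval representation being a separate (downstream) ingredient.
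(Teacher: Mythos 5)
Your proof is correct and complete. Worth noting for the record: the paper itself does \emph{not} prove Theorem~\ref{thm:SOS_sdpRep}; it states it as a known result and points to the SOS/moment-problem literature (Parrilo, Lasserre) and to Sobrie et al. Your argument is exactly the standard Gram-matrix argument from that literature: read off the coefficient identity by expanding the bilinear form $x_p^\top\Gamma x_p$ along anti-diagonals (your ``coefficient bridge''); go from SOS to PSD by setting $\Gamma=\sum_i v_iv_i^\top$ where $v_i$ are the coefficient vectors of the squared polynomials; and go from PSD back to SOS via the spectral (or Cholesky) factorization $\Gamma=\sum_k\lambda_k w_kw_k^\top$, which recovers $S$ as a sum of at most $p+1$ squares. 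All three steps are sound, and you are right that the coefficient bridge is where the careful matching must happen --- the functional identity passes to the coefficientwise identity only because $\{1,x,\dots,x^{2p}\}$ is linearly independent, and the anti-diagonal indexing $j+k=i+2$ is exactly what aligns $[\mathbb{I}_i^p]_{jk}$ with the degree count $x^{j-1}x^{k-1}=x^{j+k-2}$.

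Two small remarks that you implicitly handle but may want to flag explicitly. First, the theorem's statement writes $\Gamma$ as $(p\times p)$, which is inconsistent with $x_p$ having $p+1$ entries; the correct dimension is $(p+1)\times(p+1)$, as your proof tacitly assumes when you say ``whose size conforms to $x_p$'' and when you conclude with ``$r\le p+1$ squares.'' Second, the paper labels $\langle\cdot,\cdot\rangle_F$ the Frobenius \emph{norm}, but you correctly treat it as the Frobenius \emph{inner product} $\langle A,B\rangle_F=\sum_{j,k}A_{jk}B_{jk}$ --- the coefficient identity requires the inner product, not the norm.
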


\begin{thm}
Let $\mathbb{S}_n$ the subspace of real-valued symmetric matrices, in the vector space of square matrices. The set of symmetric SDP matrices $\Sigma_N$ is  a proper cone in $\mathbb{S}_n$, and thus is a closed convex set.
\label{thm:symat_sdp}
\end{thm}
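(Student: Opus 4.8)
\textbf{Proof plan for Theorem~\ref{thm:symat_sdp}.}

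The statement to be proven is that the set $\Sigma_N$ of symmetric semidefinite positive matrices is a proper cone inside the vector space $\mathbb{S}_n$ of real symmetric matrices, and is therefore closed and convex. The plan is to verify each of the defining properties of a proper cone in turn: (1) $\Sigma_N$ is a convex cone; (2) it is closed; (3) it is pointed (i.e. $\Sigma_N \cap (-\Sigma_N) = \{0\}$); (4) it is solid (i.e. has nonempty interior in $\mathbb{S}_n$). Closedness and convexity then follow immediately as a byproduct. Throughout, the characterization $A \in \Sigma_N \iff v^\top A v \geq 0$ for all $v \in \mathbb{R}^n$ is the main working tool.

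First I would establish the cone and convexity properties. For any $A \in \Sigma_N$ and $\lambda \geq 0$, $v^\top(\lambda A)v = \lambda\, v^\top A v \geq 0$, so $\lambda A \in \Sigma_N$; hence $\Sigma_N$ is a cone. For convexity, given $A, B \in \Sigma_N$ and $t \in [0,1]$, linearity of $v \mapsto v^\top(\cdot)v$ gives $v^\top(tA + (1-t)B)v = t\,v^\top A v + (1-t)\,v^\top B v \geq 0$, so $tA + (1-t)B \in \Sigma_N$. Next, closedness: fix $v \in \mathbb{R}^n$; the map $A \mapsto v^\top A v$ is a continuous linear functional on $\mathbb{S}_n$, so $H_v := \{A \in \mathbb{S}_n : v^\top A v \geq 0\}$ is closed, and $\Sigma_N = \bigcap_{v \in \mathbb{R}^n} H_v$ is an intersection of closed sets, hence closed.

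For pointedness, suppose $A \in \Sigma_N$ and $-A \in \Sigma_N$. Then $v^\top A v \geq 0$ and $v^\top A v = -v^\top(-A)v \leq 0$ for every $v$, so $v^\top A v = 0$ for all $v$; polarizing (or using that a symmetric matrix with identically zero quadratic form is zero) yields $A = 0$. For solidity, I would exhibit an interior point: the identity matrix $I_n$ is positive definite, so there is $\varepsilon > 0$ such that every symmetric $B$ with $\|B - I_n\| < \varepsilon$ still satisfies $v^\top B v > 0$ for all $v \neq 0$ (e.g. by a spectral/continuity argument on the smallest eigenvalue, which is a continuous function of the matrix entries), hence lies in $\Sigma_N$; thus $I_n \in \mathrm{int}(\Sigma_N)$. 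Together, (1)–(4) show $\Sigma_N$ is a proper cone, and (1)–(2) give the claimed convexity and closedness.

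The only step with any subtlety is the solidity argument, since it requires knowing that positive definiteness is an \emph{open} condition — equivalently that the least eigenvalue depends continuously on the matrix. This is standard (eigenvalues are continuous functions of matrix entries, or one can argue directly: if $\lambda_{\min}(I_n) = 1$ then for $\|B - I_n\|_{\mathrm{op}} < 1$ one has $v^\top B v \geq (1 - \|B-I_n\|_{\mathrm{op}})\|v\|^2 > 0$), so no real obstacle arises; the remainder of the proof is routine verification of definitions.
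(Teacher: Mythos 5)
Your proof is correct, and it is a clean, standard verification of the definition of a proper cone — the cone and convexity parts via linearity of $A \mapsto v^\top A v$, closedness via intersection of the closed half-spaces $H_v$, pointedness via polarization, and solidity via $I_n$ being an interior point. However, the paper does not actually prove this statement: Theorem~\ref{thm:symat_sdp} is listed among the ``ingredients'' recalled without proof from the cited literature (Parrilo, Lasserre, Sobrie et al.), so there is no paper proof to compare against. Your write-out is thus a welcome fill-in of a result the paper treats as background; it is self-contained and all steps check out, including the slightly more delicate operator-norm argument $v^\top B v \geq (1 - \|B - I_n\|_{\mathrm{op}})\|v\|^2$ for solidity. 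One small cosmetic point: the paper's statement uses both $\mathbb{S}_n$ and $\Sigma_N$ with mismatched subscripts ($n$ vs.\ $N$); you silently harmonize them, which is the right reading, but if this were going into the paper it would be worth noting the typo explicitly.
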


A few results on the preservation of convexity of sets under transformations are also required. These lemmas can be found in \cite{Bertsekas2016}.

\begin{lme}[Linear maps preserve convexity]
Let $V,W$ be two vector spaces over the same field $F$. Let $T : V \rightarrow W$ be a linear map, and let $\mathcal{C} \subset V$ be a convex set. Then the image of $\mathcal{C}$ under $T$, i.e., :
$$ T(C) = \{ T(x) \in W \mid x \in C \subset V \}$$
is also a convex set.
\label{lme:linmaps_conv}
\end{lme}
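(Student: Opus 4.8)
The plan is to verify convexity directly from the definition. Throughout, the scalar field is understood to be $F = \mathbb{R}$ (as everywhere else in the paper), so that convex combinations $\lambda x + (1-\lambda)y$ with $\lambda \in [0,1]$ are meaningful and the notion of ``convex set'' makes sense in both $V$ and $W$.

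First I would take two arbitrary points $w_1, w_2 \in T(\mathcal{C})$. By definition of the set-theoretic image, there exist $x_1, x_2 \in \mathcal{C}$ such that $w_1 = T(x_1)$ and $w_2 = T(x_2)$. Then I would fix an arbitrary $\lambda \in [0,1]$ and consider the convex combination $w_\lambda := \lambda w_1 + (1-\lambda) w_2 \in W$. Linearity of $T$ yields
$$ w_\lambda \;=\; \lambda\, T(x_1) + (1-\lambda)\, T(x_2) \;=\; T\!\bigl(\lambda x_1 + (1-\lambda) x_2\bigr). $$
Since $\mathcal{C}$ is convex and $x_1, x_2 \in \mathcal{C}$, the point $x_\lambda := \lambda x_1 + (1-\lambda) x_2$ belongs to $\mathcal{C}$, hence $w_\lambda = T(x_\lambda) \in T(\mathcal{C})$. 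As $w_1, w_2 \in T(\mathcal{C})$ and $\lambda \in [0,1]$ were arbitrary, $T(\mathcal{C})$ is convex, which concludes the proof.

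There is no genuine obstacle here: the statement is a one-line application of linearity. The only point deserving a word of care is purely bookkeeping — one reads $T(\mathcal{C})$ as the image $\{T(x) : x \in \mathcal{C}\}$ (so that each of its elements admits a preimage in $\mathcal{C}$), and one relies implicitly on the standing convention $F = \mathbb{R}$ so that ``convex set'' is well posed on both sides of $T$. This result, together with its companions on convexity-preserving operations used in the proof of Theorem~\ref{thm:opti_pol_CCQP}, is classical and can also be found in \cite{Bertsekas2016}.
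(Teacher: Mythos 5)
Your proof is correct. The paper itself does not prove this lemma but simply cites \cite{Bertsekas2016}; your argument is the standard textbook one (take preimages of two image points, apply linearity to commute $T$ with the convex combination, use convexity of $\mathcal{C}$), and it is complete and watertight.
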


\begin{lme}[Cartesian product of convex sets is a convex set]
Let $C_1$ be a subset of $\mathbb{R}^m$ and $C_2$ be a convex subset of $\mathbb{R}^n$. Then, the Cartesian product
$C_1 \times C_2$ is a convex subset of  $\mathbb{R}^m \times \mathbb{R}^n.$
\label{lme:cart_conv}
\end{lme}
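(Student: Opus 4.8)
The plan is to verify the definition of convexity directly for the product set, reducing it to the convexity of each factor by observing that convex combinations in $\mathbb{R}^m \times \mathbb{R}^n$ act blockwise on the two coordinate groups. (I read the hypothesis as requiring $C_1 \subseteq \mathbb{R}^m$ to be convex as well, consistent with the lemma's title and with the way it is invoked alongside Lemma~\ref{lme:linmaps_conv} in the proof of Theorem~\ref{thm:opti_pol_CCQP}; taken literally with $C_1$ an arbitrary subset the statement is false, e.g. $C_1 = \{0,1\}$, $C_2 = \mathbb{R}$.)

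Concretely, I would fix two arbitrary points $z = (x,y)$ and $z' = (x',y')$ of $C_1 \times C_2$, so that $x,x' \in C_1$ and $y,y' \in C_2$, together with a scalar $\lambda \in [0,1]$. Using that addition and scalar multiplication on $\mathbb{R}^m \times \mathbb{R}^n$ are defined componentwise, the convex combination decomposes as $\lambda z + (1-\lambda) z' = \bigl(\lambda x + (1-\lambda) x',\ \lambda y + (1-\lambda) y'\bigr)$. Convexity of $C_1$ gives $\lambda x + (1-\lambda) x' \in C_1$, and convexity of $C_2$ gives $\lambda y + (1-\lambda) y' \in C_2$; hence the pair lies in $C_1 \times C_2$. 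Since $z$, $z'$, and $\lambda$ were arbitrary, every segment joining two points of $C_1 \times C_2$ stays inside $C_1 \times C_2$, which is exactly convexity.

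There is essentially no obstacle here: the only point requiring a line of justification is that the segment parametrisation in the product vector space genuinely splits as the pair of segment parametrisations in the two factors, and this is immediate from the definition of the product vector-space structure. An entirely analogous argument handles finite products, which is all that is needed later when assembling the constraint set $\mathcal{K}$ of Theorem~\ref{thm:opti_pol_CCQP} as a product of cones of SDP matrices (Theorem~\ref{thm:symat_sdp}) intersected with affine constraints.
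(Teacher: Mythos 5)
The paper does not prove this lemma; it is stated together with Lemma~\ref{lme:linmaps_conv} and attributed to \cite{Bertsekas2016} as a standard fact, so there is no in-paper argument to compare against. Your direct verification---fix two points $(x,y),(x',y')\in C_1\times C_2$ and a scalar $\lambda\in[0,1]$, observe that the convex combination in the product space splits componentwise, and apply convexity of each factor---is the standard textbook proof and is correct. You are also right to flag the omission in the hypothesis: as printed, $C_1$ is only required to be ``a subset'' of $\mathbb{R}^m$, which makes the statement literally false (your counterexample $C_1=\{0,1\}$, $C_2=\mathbb{R}$ suffices); the lemma's title and its later use in assembling the feasible set $\mathcal{K}$ of Theorem~\ref{thm:opti_pol_CCQP} make clear that $C_1$ is intended to be convex as well, and your reading is the correct one.
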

Two additional results, proven beneath, are required before proceeding to the proof of Theorem \ref{thm:opti_pol_CCQP}.
\begin{lme}
The mapping in (\ref{eq:coefToSDP}), $V : \mathbb{S}_p \rightarrow \mathbb{R}^{2p}$, defined, for any $\Gamma \in \mathbb{S}_p$, as:
$$V(\Gamma) = \left( \sum_{j+k = i+2} \Gamma_{j,k}\right)_{i=0,\dots,2p} $$
is linear.
\label{lme:map_linear}
\end{lme}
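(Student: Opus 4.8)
The plan is to observe that $V$ is assembled coordinate by coordinate out of finite linear functionals of the entries of its matrix argument, so that linearity follows from the elementary fact that a map into a finite-dimensional space whose every coordinate is a linear combination of the coordinates of the input is itself linear; the only point requiring care is that $\mathbb{S}_p$ is a genuine linear subspace of the ambient matrix space, so that the restriction of $V$ to it retains linearity.

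Concretely, I would first fix a coordinate index $i$ in the admissible range and record that
\[
V(\Gamma)_i \;=\; \sum_{j+k = i+2} \Gamma_{j,k},
\]
the sum ranging over the pairs $(j,k)$ of valid matrix indices with $j+k=i+2$, a set that does not depend on $\Gamma$; hence $\Gamma \mapsto V(\Gamma)_i$ is a linear functional on the space of matrices. Next I would note that $\mathbb{S}_p$ is closed under linear combinations, being the kernel of the linear map $\Gamma \mapsto \Gamma - \Gamma^\top$, so that for $\Gamma, \Gamma' \in \mathbb{S}_p$ and $\lambda,\mu \in \mathbb{R}$ the matrix $\lambda\Gamma + \mu\Gamma'$ is again symmetric and therefore in the domain of $V$.

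The verification is then immediate: for every admissible $i$,
\[
V(\lambda\Gamma + \mu\Gamma')_i \;=\; \sum_{j+k=i+2}\bigl(\lambda\Gamma_{j,k} + \mu\Gamma'_{j,k}\bigr) \;=\; \lambda\sum_{j+k=i+2}\Gamma_{j,k} \;+\; \mu\sum_{j+k=i+2}\Gamma'_{j,k} \;=\; \lambda V(\Gamma)_i + \mu V(\Gamma')_i,
\]
using only the finiteness of the sums and distributivity in $\mathbb{R}$; since this holds in every coordinate, $V(\lambda\Gamma + \mu\Gamma') = \lambda V(\Gamma) + \mu V(\Gamma')$, which is the claim. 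There is no genuine obstacle in this argument — the lemma is essentially a bookkeeping statement — and the only thing to keep straight is the indexing convention for the pairs $(j,k)$ contributing to each coordinate (equivalently, the size of the matrices in $\mathbb{S}_p$ matching the monomial basis used in Theorem~\ref{thm:SOS_sdpRep}), which does not affect the linearity conclusion.
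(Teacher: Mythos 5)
Your proposal is correct and follows essentially the same route as the paper's proof: both verify linearity coordinate-wise using the finiteness of the index set $\{(j,k): j+k=i+2\}$ and distributivity in $\mathbb{R}$, the paper splitting the check into additivity and homogeneity while you combine them into a single computation over $\lambda\Gamma+\mu\Gamma'$. Your additional observation that $\mathbb{S}_p$ is closed under linear combinations (so the domain is genuinely a vector space) is a small but harmless extra precaution the paper leaves implicit.
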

\begin{proof}[Proof of Lemma~\ref{lme:map_linear}]
We need to show that:
\begin{itemize}
    \item For $A,B \in \mathbb{S}_p$, $T(A + B) = T(A) + T(B)$;
    \item For $\alpha \in \mathbb{R}$, $\Gamma \in \mathbb{S}_p$, $T(\alpha\Gamma) = \alpha T(\Gamma)$. 
\end{itemize}
First, we have, for $i=0,\dots, 2p$:
\begin{align*}
    \Bigl[T(A+B)\Bigr]_i &= \sum_{j+k = 2p-i} \Bigl[A+B\Bigr]_{jk} \\
    &= \sum_{j+k = i+2} A_{jk}+B_{jk} \\
    &= \sum_{j+k = i+2} A_{jk} + \sum_{j+k = i+2} B_{jk} \\
    &= \Bigl[T(A)\Bigr]_i + \Bigl[T(B)\Bigr]_i
\end{align*}
since it holds for $i=0, \dots, 2p$, it entails:
$$T(A+B) = T(A) + T(B).$$
Moreover, we have, for $i=0,\dots, 2p$:
\begin{align*}
    \Bigl[T(\alpha \Gamma)\Bigr]_i &=  \sum_{j+k = i+2} \alpha \Gamma_{jk} \\
    &= \alpha  \Bigl[T(\Gamma)\Bigr]_i
\end{align*}
and since it holds for $i=0, \dots, 2p$, it entails:
$$T(\alpha \Gamma) = \alpha T(\Gamma).$$
Hence $T$ is a linear map between $\mathbb{S}_p$ and $\mathbb{R}^{2p}$.
\end{proof}

\begin{lme}
Let $S$ be a univariate polynomial of degree $d$ and $s=(s_0, \dots, s_d)^\top \in \mathbb{R}^{d+1}$ its coefficients. Let $S'$ be its derivative, i.e., a polynomial of degree $d-1$, with coefficients $\breve{s} = (s_1, \dots, s_d)^\top \in \mathbb{R}^{d}$. Let $Z$ and $W$ be SOS polynomials, with coefficients $z$ and $w$, and assume that $S'$ is non-negative on $[t_0, t_1]$ as a combination of $Z$ and $W$ as in Theorem~\ref{thm:posPoly_interv}. Moreover, let 
$$D = \operatorname{diag}(1,2,\dots,d) $$
be the $(d \times d)$ diagonal matrix with $(1,\dots, d)$ as a diagonal elements and denote the bloc-matrices
\begin{equation*}
    \overI_{i,d} = \begin{pmatrix}
    I_d \\
    \mathbf{0}_{i,d}
    \end{pmatrix}, \quad
   \underI_{i,d} = \begin{pmatrix}
        \mathbf{0}_{i,d} \\
        I_d 
    \end{pmatrix}, \quad         
    \overunderI_{i,d} = \begin{pmatrix}
        \mathbf{0}_{i,d} \\
        I_d \\
        \mathbf{0}_{i,d}
    \end{pmatrix}
\end{equation*}
where $\mathbf{0}_{i,d}$ denotes the $(i \times d)$ matrix of zeros, and $I_d$ be the $(d \times d)$ identity matrix.
If $d$ is odd, then $z \in \mathbb{R}^d$ and $w \in \mathbb{R}^{d-2}$ and furthermore 
$$\breve{s} = Az + Bw$$
where $A$ and $B$ are $(d\times d)$ and $(d \times d-2)$ matrices, respectively.
If the degree $d$ of $S$ is even, one has that $z,w \in \mathbb{R}^{d-1}$ and furthermore:
$$\breve{s} = Cz + Dw.$$
where $C$ and $D$ are $(d \times d-1)$ matrices.
More specifically,
\begin{align*}
    A &= \mathcal{D}_d^{-1}, & B &= \mathcal{D}_d^{-1} \left( (t_0 + t_1) \overunderI_{1,d-2} - \underI_{2,d-2} - t_0t_1 \overI_{2,d-2} \right), \\
    C &= \mathcal{D}_d^{-1} \left(\underI_{1,d-1}-t_0\overI_{1,d-1} \right), & D &= \mathcal{D}_d^{-1} \left(t_1\overI_{1, d-1} - \underI_{1,d-1} \right).
\end{align*}
\label{lme:SOStoDeriv}
\end{lme}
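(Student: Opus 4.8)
The plan is to reduce the statement to purely linear bookkeeping about coefficient vectors. The starting observation is that differentiating $S(x)=\sum_{i=0}^d s_i x^i$ term by term gives $S'(x)=\sum_{i=1}^d i\,s_i x^{i-1}$, so the coefficient vector of $S'$ in the monomial basis $1,x,\dots,x^{d-1}$ is exactly $\mathcal{D}_d\,\breve s$, where $\mathcal{D}_d:=\operatorname{diag}(1,2,\dots,d)$ is the diagonal matrix denoted $D$ in the statement and $\breve s=(s_1,\dots,s_d)^\top$. Every diagonal entry of $\mathcal{D}_d$ is nonzero, so $\mathcal{D}_d$ is invertible; hence it suffices to express the coefficient vector of $S'$ as an explicit linear image of the pair $(z,w)$ and then left-multiply by $\mathcal{D}_d^{-1}$. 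Since $\deg S'=d-1$, the parity of $d$ selects which half of Theorem~\ref{thm:posPoly_interv} applies, and this is precisely the case split in the statement.

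For $d$ odd, $d-1$ is even, so the even-degree statement of Theorem~\ref{thm:posPoly_interv} applied to the nonnegative polynomial $S'$ on $[t_0,t_1]$ yields $S'(x)=Z(x)+(x-t_0)(t_1-x)W(x)$ with $Z$ an SOS polynomial of degree at most $d-1$ (hence $z\in\mathbb{R}^{d}$) and $W$ an SOS polynomial of degree at most $d-3$ (hence $w\in\mathbb{R}^{d-2}$). Expanding $(x-t_0)(t_1-x)=-x^2+(t_0+t_1)x-t_0t_1$, I would identify the three maps that send the coefficient vector of a degree-$(d-3)$ polynomial to those of $x^2W$, $xW$ and $W$, each re-embedded into the coefficient space $\mathbb{R}^{d}$ of polynomials of degree at most $d-1$: multiplying by $x$ prepends a zero, multiplying by $x^2$ prepends two zeros, and trailing zeros provide the degree padding, so these maps are exactly the block matrices $\underI_{2,d-2}$, $\overunderI_{1,d-2}$ and $\overI_{2,d-2}$, while the contribution of $Z$ is the embedding of $z$, i.e.\ $z$ itself. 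Matching coefficients gives $\mathcal{D}_d\breve s= z+\bigl((t_0+t_1)\overunderI_{1,d-2}-\underI_{2,d-2}-t_0t_1\overI_{2,d-2}\bigr)w$, and left-multiplying by $\mathcal{D}_d^{-1}$ produces $\breve s=Az+Bw$ with $A=\mathcal{D}_d^{-1}$ and $B$ as stated.

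For $d$ even, $d-1$ is odd, so the odd-degree statement of Theorem~\ref{thm:posPoly_interv} gives $S'(x)=(x-t_0)Z(x)+(t_1-x)W(x)$ with $Z,W$ SOS of degree at most $d-2$, hence $z,w\in\mathbb{R}^{d-1}$. Writing $(x-t_0)Z=xZ-t_0Z$ and $(t_1-x)W=t_1W-xW$, and using that on the coefficient space $\mathbb{R}^{d-1}$ of degree-$(d-2)$ polynomials the operation ``multiply by $x$ and embed into $\mathbb{R}^{d}$'' is $\underI_{1,d-1}$ while ``embed into $\mathbb{R}^{d}$'' is $\overI_{1,d-1}$, I would collect $\mathcal{D}_d\breve s=(\underI_{1,d-1}-t_0\overI_{1,d-1})z+(t_1\overI_{1,d-1}-\underI_{1,d-1})w$; left-multiplication by $\mathcal{D}_d^{-1}$ then yields $\breve s=Cz+Dw$ with $C$ and $D$ as in the statement.

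The only delicate point, and the main thing to carry out carefully, is the index bookkeeping: one must check that multiplying a polynomial of degree $m$ by $x^k$ and then zero-padding its coefficient vector up to length $d$ is represented by the stated $\overI$, $\underI$, $\overunderI$ blocks with the correct padding sizes, and confirm that the dimension counts $z\in\mathbb{R}^{d}$, $w\in\mathbb{R}^{d-2}$ in the odd case and $z,w\in\mathbb{R}^{d-1}$ in the even case are forced by $\deg S'=d-1$ together with the degree bounds of Theorem~\ref{thm:posPoly_interv}. Existence of the SOS decomposition itself is supplied by that theorem, so no analytic input beyond it is needed — the rest is a finite, if slightly tedious, linear computation.
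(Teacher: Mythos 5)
Your proof is correct and follows essentially the same route as the paper: differentiate $S$ so the monomial coefficient vector of $S'$ is $\mathcal{D}_d\breve s$, invoke Theorem~\ref{thm:posPoly_interv} with the parity of $d-1$ deciding which Markov--Lukács form applies, match coefficients, and left-multiply by $\mathcal{D}_d^{-1}$. The paper writes out the coefficient identities entry by entry before assembling them into matrix form, whereas you identify the $\overI$, $\underI$, $\overunderI$ blocks directly as ``multiply by $x^k$ and zero-pad'' operators, which is a slightly more conceptual phrasing of the same linear bookkeeping; the dimension counts and the final formulas for $A,B,C,D$ agree.
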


\begin{proof}[Proof of Lemma~\ref{lme:SOStoDeriv}]
First, assume that $S$ is a polynomial of odd degree $d=2p+1$, meaning that its derivative, $S'$, is a polynomial of even degree $2p$. From Theorem~\ref{thm:posPoly_interv}, one has that $S'(x)$ is positive on an interval $[t_0, t_1]$ if and only if it can be expressed as :
\begin{equation*}
    S'(x) = Z(x) + (x-t_0)(t_1 - x)W(x)
\end{equation*}
where $Z$ is an SOS polynomial of degree at most equal to $d-1$ and $W$ is an SOS polynomial of degree at most equal to $d-3$. Denote $\breve{s} = (s_1, \dots, s_d) \in R^d$ the coefficients of $S'$ and $z = (z_1, \dots, z_d) \in \mathbb{R}^d$ and $w = (w_1, \dots, w_{d-2)} \in \mathbb{R}^{d-2}$ the coefficients of $Z$ and $W$ respectively. One has that :
\begin{align*}
    S'(x) &= \sum_{i=1}^d is_ix^{i-1} \\
    &= \sum_{j=0}^{d-1} (j+1) s_{i+1}x^i
\end{align*}
and if $S'$ is assumed to be non-negative on $[t_0, t_1]$
\begin{align*}
    S'(x) &=  Z(x) + (x-t_0)(t_1 - x)W(x) \\
    &= \sum_{j=0}^{d-1} z_{j+1}x^j + (-x^2 + (t_0 + t_1)x - t_0t_1)\sum_{j=0}^{d-3}w_{j+1}x^j
\end{align*}
leading to the following identification :
\begin{equation*}
\begin{cases}
    s_1 = z_1 - t_0t_1w_1 \\
    s_2 = \frac{1}{2} \left(z_2 -t_0t_1w_2 +(t_0+t_1)w_1 \right) \\
    s_i = \frac{1}{i} \left(z_i - t_0t_1w_i + (t_0+t_1)w_{i-1} - w_{i-2} \right), & \text{for } i=3,\dots,d-2 \\
    s_{d-1} = \frac{1}{d-1} \left(z_{d-1} + (t_0 + t_1)w_{d-2} - w_{d-3} \right) \\
    s_{d} = \frac{1}{d} \left(z_{d-1}  - w_{d-2} \right),
\end{cases}
\end{equation*}
or, written in a matrix form:
$$\breve{s} =  \mathcal{D}_d^{-1}\left( z + \left((t_0 + t_1) \overunderI_{1,d-2} - \underI_{2,d-2} - t_0t_1 \overI_{2,d-2}\right)w\right).$$

If $S$ is assumed to be a polynomial of even degree $d=2p$, $S'$ is necessarily odd degree. From Theorem~\ref{thm:posPoly_interv}, one has that $S'(x)$ is positive on an interval $[t_0, t_1]$ if and only if it can be expressed as :
\begin{equation*}
    S'(x) = (x-t_0)Z(x) + (t_1 -x)W(x)
\end{equation*}
where $Z$ and $W$ are SOS polynomials of degree at most equal to $d-2$ with $z = (z_1, \dots, z_{d-1}) \in \mathbb{R}^{d-1}$ and $w = (w_1, \dots, w_{d-1)} \in \mathbb{R}^{d-1}$ as coefficients, respectively. It leads to the following identification:
\begin{equation*}
\begin{cases}
    s_1 = - t_0z_1 + t_1w_1 \\
    s_i = \frac{1}{i} \left(z_{i-1} - t_0z_{i} + t_1w_{i} - w_{i-1} \right) & \text{for } i=2,\dots,d-1\\
    s_{d} = \frac{1}{d} \left(z_{d-1} - w_{d-1} \right),
\end{cases}
\end{equation*}
which can be written in matrix form as 
$$\breve{s} =  \mathcal{D}_d^{-1}\left(\left(\underI_{1,d-1}-t_0\overI_{1,d-1} \right)z + \left(t_1\overI_{1, d-1} - \underI_{1,d-1} \right)w \right).$$
\end{proof}

We can now proceed to prove Theorem~\ref{thm:opti_pol_CCQP}.
\begin{proof}[\textbf{Proof of Theorem~\ref{thm:opti_pol_CCQP}} \textbf{(rationale)}]
This rationale can be broken down in two steps: {\bf (a)} proving that the objective function (\ref{eq:polyPb_interv}) can indeed be written in a quadratic form, and:{\bf (b)} proving that the problem  constraints form a feasible set in $\mathbb{R}^{d+1}$ which is closed and convex.\\

\noindent {\bf (a)} Notice first that the initial objective function
$$\int_{t_0}^{t_1} (L(x) - F^{\rightarrow}_P(x))^2 dx $$
where $L \in \mathbb{R}[x]_{\leq d}$ with coefficients $s \in \mathbb{R}^{d+1}$, can be rewritten as:
\begin{align*}
    \int_{t_0}^{t_1} (F^{\rightarrow}_P(x) - L(x))^2 dx &= \int_{t_0}^{t_1} (\sum_{i=0}^{d}s_ix^i - F^{\rightarrow}_P(x))^2 dx\\
    &= \int_{t_0}^{t_1} \left(\left(\sum_{i=0}^{d}s_ix^i\right)^2 +\left(F^{\rightarrow}_P(x)\right)^2 - 2\sum_{i=0}^{d}s_ix^iF^{\rightarrow}_P(x) \right) dx \\
    &= \int_{t_0}^{t_1} \left(\sum_{i=0}^{d}s_ix^i\right)^2 dx - 2\sum_{i=0}^{d}s_i \int_{t_0}^{t_1}x^iF^{\rightarrow}_P(x)dx \\ 
    & \ \ + \int_{t_0}^{t_1} \left(F^{\rightarrow}_P(x)\right)^2dx.
\end{align*}
Note that
\begin{align*}
    \int_{t_0}^{t_1} \left(\sum_{i=0}^{d}s_ix^i\right)^2 dx &= \sum_{i=0}^d \sum_{j=0}^d s_is_j \int_{t_0}^{t_1}x^{i+j}dx \\
    &= s^\top M s
\end{align*}
where $M$ is the moment matrix of the Lebesgue measure on $[t_0, t_1]$, i.e., defined entry-wise, for $i,j=1,\dots,d+1$ as
$$M_{ij} = \int_{t_0}^{t_1} x^{i+j-2}dx = \frac{(t_1)^{i+j-1} - (t_0)^{i+j-1}}{i+j-1}.$$
and further notice that $M$ is thus positive definite since, for any $u \in \mathbb{R}^{d+1}$,
$$u^\top M u = \int_{t_0}^{t_1} \left(\sum_{i=0}^d u_{i+1}x^i \right)^2 dx \geq 0$$
is always non-negative, and equal to $0$ if and only if $u_i = 0, i=1,\dots, d+1$. Moreover, note that:
\begin{align*}
    \sum_{i=0}^{d}s_i \int_{t_0}^{t_1}x^iF^{\rightarrow}_P(x)dx &= s^\top r
\end{align*}
where $r \in \mathbb{R}^{d+1}$ is the moment vector of $G$ with respect to the Lebesgue measure on $[t_0, t_1]$, defined for $i=0,\dots, d$ as:
$$r_i = \int_{t_0}^{t_1} x^iF^{\rightarrow}_P(x)dx$$
Since a polynomial is completely characterized by its coefficients, searching for:
$$S^* = \underset{L \in \mathbb{R}[x]_{{\leq d}}}{\text{argmin }}\int_{t_0}^{t_1} (L(x) - F^{\rightarrow}_P(x))^2 dx$$
is equivalent to finding the coefficients $s^*$ of $S^*$, i.e.,
$$ s^* = \underset{s \in \mathbb{R}^{p+1}}{\text{argmin }}  s^\top M s -2s^\top r$$
and thus proving the first part of the proposition. \\

\noindent {\bf (b)} Notice that the interpolation constraints
$$\begin{cases}
    S(t_0) = b_0 \\
    S(t_1) = b_1
\end{cases}$$
can be written as
$$\begin{cases}
    s^\top \mathbf{t_{0}}^d = b_0 \\
    s^\top \mathbf{t_{1}}^d  = b_1 
\end{cases}$$
where, for $a \in \mathbb{R}$, one denote $\mathbf{a}^d$ the vector of powers of $a$ up to $d$, i.e., $\mathbf{a}^d = (1, a, \dots, a^{d-1}, a^d) \in \mathbb{R}^{d+1}$. Moreover, by letting:
$$\mathbf{T} = \begin{pmatrix} \mathbf{t_{0}}^d \\ \mathbf{t_{1}}^d \end{pmatrix}, \quad b= \begin{pmatrix} b_0 \\ b_1 \end{pmatrix}, $$
where $\mathbf{T}$ is a $(2 \times d+1)$ bloc-matrix, the constraint can be written as:
$$Ts = b.$$
Furthermore, notice that 
$$\mathcal{C}_0 = \{s \in \mathbb{R}^{d+1} \mid Ts =b\}$$
is a convex subset of $\mathbb{R}^{d+1}$, since the equality constraints are linear.
Concerning the monotonicity constraint
$$S'(x) \geq 0, \quad \forall x \in [t_0, t_1],$$
from Lemma~\ref{lme:SOStoDeriv} we can quite generically write
\begin{equation*}
    \begin{pmatrix}s_d \\ \vdots \\ s_1\end{pmatrix} = T_0(z,w) := Az + Bw
\end{equation*}
where $z$ and $w$ are the coefficient of SOS polynomials of degrees depending on $d$. Additionally, notice that the mapping $T_0 : \mathbb{R}^d \times \mathbb{R}^{d-2} \rightarrow \mathbb{R}^d $ is linear. 
Next, let $V_1 : \mathbb{S}_p \rightarrow \mathbb{R}^{2p}$, and $V_2 : \mathbb{S}_{q} \rightarrow \mathbb{R}^{2q}$ be defined as in (\ref{eq:coefToSDP}), where $p = {d-1}/{2}$  and $q = {d-3}/{2}$ if $d$ is odd, or  $p={d-2}/{2}$ and $q={d-2}/{2}$ if $d$ is even, and note that both mappings are linear thanks to Lemma~\ref{lme:map_linear}. 

Moreover, denote the following sets:
$$\mathcal{Z} = \{V_1(E) \mid E \in \Sigma_{p} \}, \quad \mathcal{W} = \{V_2(E) \mid E \in \Sigma_{p-1} \}$$
and notice the polynomial $Z$ (resp. $W$) is SOS if and only its coefficients $z$ (resp. $w$) are in $\mathcal{Z}$ (resp. $\mathcal{W}$) thanks to Theorem~\ref{thm:symat_sdp}. In addition again, notice that, thanks to Lemma~\ref{lme:linmaps_conv}, and due to the fact that $\Sigma_p$ is a closed convex set in $\mathbb{S}_p$ as per Theorem~\ref{thm:symat_sdp}, both $\mathcal{Z}$ and $\mathcal{W}$ are convex subsets of $\mathbb{R}^{2p}$ and $\mathbb{R}^{2q}$ respectively. Besides, thanks to Lemma~\ref{lme:cart_conv}, the set $\mathcal{Z} \times \mathcal{W}$ is a convex subset of $\mathbb{R}^{2p} \times \mathbb{R}^{2q}$ as well. Moreover, let
$$\mathcal{C}_1 = \left\{ \begin{pmatrix}T_0(w,z) \\ x \end{pmatrix} \in \mathbb{R}^{d+1} \mid x \in \mathbb{R}, \quad (z,w) \in  \mathcal{Z} \times \mathcal{W}\right\} $$
and note that it is a convex subset of $\mathbb{R}^{d+1}$ due to the fact that $T_0$ is a linear map. 

Finally, since both $\mathcal{C}_0$ and $\mathcal{C}_1$ are convex sets, their intersection:
$$\mathcal{K} = \mathcal{C}_0 \cap \mathcal{C}_1$$
is as well, and note that any element $s \in \mathcal{K}$ are the coefficients of a polynomial respecting both equality and monotonicity constraints. In other words, $\mathcal{K}$ is the feasible set of coefficients of the initial optimization problem.
\end{proof}



\section{Computing moment vector of arbitrary quantile functions}\label{apdx:compute_r}
One wishes here at computing the vector described in (\ref{eq:compute_r}). In the case where $P$ is an empirical measure built on a $n$-sample, one has that for $[t_0, t_1] \in [0,1]$, $i=0,\dots,p$:
\begin{align*}
    r_i =& \frac{1}{i+1}\Biggl[\sum_{j \in J} \frac{X_{(j)}}{n^{i+1}} \left(\left(j+1\right)^{i+1} - j^{i+1} \right) \\
    &+ X_{\left(\overline{j}\right)} \left(t_1^{i+1} - \left(\frac{\overline{j}}{n}\right)^{i+1} \right) \\
    &+ X_{\left(\underline{j}-1\right)} \left(\left(\frac{\underline{j}}{n}\right)^{i+1} -t_0^{i+1}  \right)
    \Biggr]
\end{align*}
where $J = \left\{i \in \mathbb{N} \mid \lfloor n t_0 \rfloor < i < \lfloor n t_1 \rfloor \right\}$, $\overline{j}= \lfloor t_1 n \rfloor$, $\underline{j} = \lfloor t_0 n \rfloor +1$, and where $X_{(j)}$ denotes the $j$-th order statistic of the observe sample. In cases where $F^{\leftarrow}_P$ is continuous, it is possible to use numerical quadrature methods in order to evaluate each integral composing the elements $r_i$ of $r$.
  
  
    
    


  


\end{document}